\newtheorem{definition}{Definition}[section]
\newtheorem{proposition}{Proposition}[section]
\newtheorem{theorem}{Theorem}[section]
\newtheorem{remark}{Remark}[section]
\newtheorem{lemma}{Lemma}[section]
\newtheorem{corollary}{Corollary}[section]
\newcommand{\R}{\mathbb{R}}
\newcommand{\Z}{\mathbb{Z}}
\newcommand{\supp}{\mathrm{supp }}
\def\S{\mathcal S}
\def\C{\mathbb C}
\def\SS{\mathcal S}
\def\T{\mathbb{T}^3}
\def\N{\mathbb N}
\def\eps{\varepsilon}
\def\mR{\mathring{R}}
\def\Id{\mathrm{Id}}
\def\div{\mathrm{div\,}}
\def\curl{\mathrm{curl\,}}
\def\tr{\mathrm{tr\, }}
\numberwithin{equation}{section}
\begin{document}

\title[Non-uniqueness and h-principle]{Non-uniqueness and h-principle for H\"older-continuous weak solutions of the Euler equations}

\author{Sara Daneri}
\address{Department Mathematik, Universit\"at Erlangen-N\"urnberg, 91058 Erlangen}
\email{daneri@math.fau.de}

\author{L\'aszl\'o Sz\'ekelyhidi Jr.} 
\address{Institut f\"ur Mathematik, Universit\"at Leipzig, 04009 Leipzig}
\email{szekelyhidi@math.uni-leipzig.de}

\maketitle

\begin{abstract}
In this paper we address the Cauchy problem for the incompressible Euler equations in the periodic setting. We prove that the set of H\"older $1\slash 5-\eps$ wild initial data is dense in $L^2$, where we call an initial datum wild if it admits infinitely many admissible H\"older $1\slash 5-\eps$ weak solutions. We also introduce a new set of stationary flows which we use as a perturbation profile instead of Beltrami flows in order to show that a general form of the h-principle applies to H\"older-continuous weak solutions of the Euler equations. Our result indicates that in a deterministic theory of 3D turbulence the Reynolds stress tensor can be arbitrary and need not satisfy any additional closure relation.
\end{abstract}

\section{Introduction}

In this paper we are concerned with the initial value problem for the incompressible Euler equations 
  \begin{equation}\label{E:e}
 \left\{\begin{aligned}
       &\partial_t v+\div (v\otimes v)+\nabla p=0\\
&\div v=0
        \end{aligned}\right.
\end{equation}
in the periodic setting $x\in \T=\R^3/\Z^3$. We study weak solutions $v$ which are H\"older continuous in space, i.e. such that 
\begin{equation}\label{e:Holder}
 |v(x,t)-v(y,t)|\leq C|x-y|^\theta,\quad \forall\, t\in [0,T]
\end{equation}
for some constant $C$ which is independent of $t$. Here $\theta\in (0,1)$ is the H\"older exponent. 
It is well known \cite{Scheffer93,Shnirelman:1997uz,DeLellis:2009jh,DeLellis:2008vc,DeLellis:2011vr} that in the class of weak solutions the initial value problem for \eqref{E:e} is not well-posed in a very strong sense: to a given initial value $v_0$ there may exist infinitely many weak solutions. 
Therefore a natural problem is to find appropriate admissibility criteria to be able to select a (physically relevant) unique solution. Unfortunately such a criterion is not presently known. Nevertheless, imposing the very mild and physically natural admissibility condition
\begin{equation}\label{e:admissibility}
\int_{\T}|v(x,t)|^2\,dx\leq  \int_{\T}|v_0(x)|^2\,dx
\end{equation}
already leads to a partial selection. Indeed, if $v_0$ is sufficiently smooth, the classical solution (which exists for some short time $[0,T)$ with constant energy) is in fact unique in the class of weak solutions which satisfy \eqref{e:admissibility} - this is known as weak-strong uniqueness, and holds even for measure-valued solutions \cite{Brenier:2011va}. On the other hand it is also known that if $v_0$ is not differentiable, there could exist infinitely many admissible weak solutions. For weak solutions which are merely bounded we refer to \cite{DeLellis:2008vc,SzekelyhidiJr:2012dc}, where it was shown that the class of such so-called ``wild'' initial data is dense in $L^2$. For weak solutions which are in addition H\"older continuous in space, the only available result so far is by the first author in \cite{Daneri:2014hb}: for any $\eps>0$ there exist infinitely many $1\slash10-\eps$ H\"older initial data which are wild in the sense that to any such initial data there exist infinitely many $1\slash 16-\eps$ H\"older solutions satisfying \eqref{e:admissibility}. 
Our aim in this paper is to continue the study of admissible H\"older-continuous weak solutions. In the following we state our main results.

\subsection{Density of wild initial data}\label{ss:density}

We start with a definition.

\begin{definition}Given a solenoidal vector field $v_0\in C^{\theta_0}(\T;\R^3)$, we say that $v_0$ is a \emph{wild initial datum in $C^{\theta}$} if there exist infinitely many weak solutions $v$ to \eqref{E:e} on $\T\times [0,1]$ with $v(x,0)=v_0(x)$ and satisfying \eqref{e:Holder} and \eqref{e:admissibility}. 
\end{definition}

With this definition the result in \cite{Daneri:2014hb} can be stated as follows: for any $\eps>0$ there exist infinitely many initial data in $C^{\sfrac{1}{10}-\eps}$ which are wild in $C^{\sfrac{1}{16}-\eps}$. Note that the loss in exponent from $1/10$ in the initial datum to $1/16$ for the solutions seems to be consistent with the example of shear flows in $C^{\theta}$ from \cite{Bardos:2009wm}, where also there is an instantaneous loss in regularity. However, it turns out that in our setting this loss in the exponent can be avoided. Our main result can be stated as follows:

\begin{theorem}[Density of wild initial data]\label{T:dense}
For any $\theta<1/5$ the set of divergence-free vector fields $v_0\in C^{\theta}(\T;\R^3)$ which are wild initial data in $C^{\theta}$ is a dense subset of the divergence-free vector fields in $L^2(\T;\R^3)$.
\end{theorem}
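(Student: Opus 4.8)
The plan is to prove Theorem~\ref{T:dense} by convex integration, realizing the wild solutions as $C^\theta$ limits of a Nash--Moser iteration for the Euler--Reynolds system
\[
\partial_t v+\div(v\otimes v)+\nabla p=\div\mR,\qquad \div v=0,
\]
with the new stationary flows as oscillatory building blocks. I take for granted the one-step ``Main Proposition'' proved later in the paper: from a smooth Euler--Reynolds flow $(v_q,p_q,\mR_q)$ on $\T\times[0,1]$ satisfying the level-$q$ estimates one constructs $(v_{q+1},p_{q+1},\mR_{q+1})$ with $\|\mR_{q+1}\|_{C^0}$ geometrically smaller and $\|v_{q+1}-v_q\|_{C^0}\lesssim\delta_{q+1}^{1/2}$, $\|v_{q+1}-v_q\|_{C^1}\lesssim\delta_{q+1}^{1/2}\lambda_{q+1}$, with $\delta_q=\lambda_q^{-2\beta}$ and $\lambda_{q+1}=\lambda_q^{b}$; moreover the perturbation can be designed so that the limiting solution has a prescribed energy profile $e(t)$, and the step is local in time up to an arbitrarily small loss. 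Provided $\beta<1/5$ the scheme closes, and if in addition $\beta>\theta$ then interpolation gives the summable bound $\|v_{q+1}-v_q\|_{C^\theta}\lesssim\lambda_{q+1}^{\theta-\beta}$, so $v_q\to v$ in $C^0_tC^\theta_x$ with $v$ a weak solution of \eqref{E:e} of energy $\int_\T|v(\cdot,t)|^2=e(t)$.

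Let $\bar u\in L^2(\T;\R^3)$ be divergence-free and $\eps>0$. By mollification choose a smooth divergence-free $\bar v$ with $\|\bar u-\bar v\|_{L^2}<\eps/2$; it then suffices to exhibit a divergence-free wild $v_0\in C^\theta$ with $\|v_0-\bar v\|_{L^2}<\eps/2$. Fix $\tau_0\in(0,1)$, fix $E_0>\|\bar v\|_{L^2}^2$ with $E_0-\|\bar v\|_{L^2}^2$ small, and fix a strictly decreasing $e_\ast\in C^\infty([0,1])$ with $e_\ast(0)=E_0$. Construct by hand a starting smooth Euler--Reynolds flow $(v_0,p_0,\mR_0)$ on $\T\times[0,1]$ with $v_0(\cdot,0)=\bar v$, with $\|\mR_0\|_{C^0}$ as small as the iteration requires (forcing the base frequency $\lambda_0$ large, hence the subsequent increments summably small), and with $\int_\T|v_0|^2+\tr\mR_0\equiv e_\ast$; one does this keeping $v_0$ essentially $t$-independent near $t=0$ and loading the prescribed energy loss into $\mR_0$, so that $\mR_0\not\equiv0$ near $t=0$.

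Let $\mathcal E$ be the infinite-dimensional family of $e\in C^\infty([0,1])$ that are strictly decreasing, lie in a fixed small neighbourhood of $e_\ast$, and coincide with $e_\ast$ on $[0,\tau_0]$. For each $e\in\mathcal E$ run the iteration with energy target $e$ from the subsolution $(v_0,p_0,\mR_0)$ (adjusted only on $(\tau_0,1]$ so as to carry energy $e$), obtaining a weak solution $v^e\in C^0_tC^\theta_x$ of \eqref{E:e} with $\int_\T|v^e(\cdot,t)|^2=e(t)$. By the locality in time of each step, all the $v^e$ coincide on a common interval $[0,\tau_1)$ with $\tau_1>0$ with one and the same field; in particular $v_0:=v^e(\cdot,0)$ is divergence-free, of class $C^\theta$, and independent of $e$, with $\|v_0\|_{L^2}^2=e(0)=E_0$ and $\|v_0-\bar v\|_{L^2}=\big\|\sum_{j\ge1} w^e_j(\cdot,0)\big\|_{L^2}<\eps/2$ by the smallness of the increments. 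Each $v^e$ satisfies \eqref{e:Holder}; since $e$ decreases with $e(0)=\|v_0\|_{L^2}^2$ it satisfies \eqref{e:admissibility}; and distinct $e\in\mathcal E$ yield distinct $v^e$ because their energies differ. Hence $v_0$ is a wild initial datum in $C^\theta$ lying within $\eps$ of $\bar u$ in $L^2$, which proves density. (In accordance with weak--strong uniqueness this $v_0$ is generically not smooth, as $\mR_0\not\equiv0$ near $t=0$ makes the common field on $[0,\tau_1)$ genuinely irregular.)

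The whole difficulty is internal to the Main Proposition, and the new point relative to \cite{Daneri:2014hb}---where localizing near $t=0$ cost the exponent $1/10\to1/16$---is to build, out of the new stationary profiles, a single frequency step that simultaneously cancels $\mR_q$, installs the prescribed energy, and is sufficiently local in time to pin down the solution near $t=0$, all while keeping the transport error and the Nash-type commutator error small enough that the iteration still closes for every $\beta<1/5$. A crucial structural point enabling this is that, unlike Beltrami flows, the new stationary profiles may be chosen with mutually disjoint supports, which suppresses the errors coming from interactions between distinct oscillatory modes; controlling what remains right up to the threshold $\beta=1/5$, in the presence of the time cut-off near $t=0$, is the part I expect to be the main obstacle.
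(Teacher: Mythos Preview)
Your high-level plan follows the earlier approach of \cite{Daneri:2014hb}---fix a family of energy profiles agreeing near $t=0$, run the iteration, and rely on time-locality of each step to force a common initial datum---rather than the mechanism the paper actually uses. As you yourself note, in \cite{Daneri:2014hb} this localisation costs an exponent; you assert that the new stationary profiles remove the loss, but give no argument, and in fact the paper does not proceed this way. Mikado flows are used only to pass from strict to strong subsolutions (Proposition~\ref{p:stristro}); the subsequent stages use the Beltrami-based scheme of \cite{Buckmaster:2014ty}. The device that fixes the initial datum without exponent loss is instead the notion of a \emph{$C^\theta$-adapted subsolution} (Definition~\ref{D:adapt}): one first iterates a smooth strong subsolution into an object with $\bar R(\cdot,0)\equiv0$ and $\bar v(\cdot,0)\in C^\theta$, at the price of $C^1$ blow-up as $t\to0^+$ at the calibrated rates \eqref{e:adapted-est} (Proposition~\ref{p:stradapt}). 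In the final iteration to solutions (Proposition~\ref{P:adaptsol}) the perturbation at step $q$ is supported in $\{t:\rho_q(t)>\tfrac32\delta_{q+2}\}$, which never contains $t=0$ because $\rho_0(0)=0$; hence the initial datum is preserved exactly and no locality of the step is invoked. The infinitely many solutions arise from the $H^{-1}$ approximation in Proposition~\ref{P:adaptsol}, not from varying the energy.

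There is also a concrete gap in your $L^2$ closeness. You ask for a smooth starting subsolution with $v_0(\cdot,0)=\bar v$ and $\|\mR_0\|_{C^0}$ arbitrarily small, but with $\bar v$ fixed the Reynolds stress near $t=0$ must absorb $\div(\bar v\otimes\bar v)$ and is of fixed size (taking $v_0$ to be the classical Euler evolution of $\bar v$ would give $\mR_0\equiv0$, but then weak--strong uniqueness forbids wildness). Consequently $\delta_1\sim\|\mR_0\|$ is of order one, the first increments have $\|w_j\|_{C^0}\sim\delta_j^{1/2}$ not small, and summing $C^0$ norms does not yield $\|v_0-\bar v\|_{L^2}<\eps/2$. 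The paper avoids this by controlling the perturbations in $H^{-1}$ rather than $C^0$ (highly oscillatory corrections are large in amplitude but weakly small) and combining this with the conserved tensor identity $\int \bar v\otimes\bar v+\bar R=\int\tilde v\otimes\tilde v+\tilde R$; see the computation in the proof of Theorem~\ref{T:dense} bounding $\|v_0-\tilde v_0\|_{L^2}^2$ by $\int\tr\tilde R(\cdot,0)+2\sigma\|\tilde v_0\|_{H^1}$.
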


\subsection{h-principle}\label{ss:hprinciple}

Our second main result in this paper can be seen as an h-principle type result for equation \eqref{E:e}. In order to motivate the statement, recall the discussion in \cite{DeLellis:2011vr} concerning the Reynolds stress tensor and its relation to the notion of subsolutions. Let $v$  be a (deterministic weak or random turbulent) solution of \eqref{E:e} and consider a certain averaging process leading to the decomposition
$$
v=\overline{v}+w
$$
where $\overline{v}$ is the ``average'' and $w$ is the ``fluctuation''. The Euler equations \eqref{E:e} for $v$ transform into
\begin{equation}\label{e:averageE}
 \left\{\begin{aligned}
       &\partial_t \bar{v}+\div (\bar{v}\otimes \bar{v})+\nabla \bar{p}=-\div \bar{R}\\
&\div \bar{v}=0
        \end{aligned}\right.
\end{equation}
where 
\begin{equation}\label{e:Rstress}
\bar{R}=\overline{v\otimes v}-\overline{v}\otimes\overline{v}=\overline{w\otimes w}.
\end{equation}
Being an average of positive semidefinite tensors, it is easy to see that $\bar{R}$ is positive semidefinite. Accordingly, a subsolution is defined to 
be a triple $(\bar{v},\bar{p},\bar{R})$ where $\div\bar{v}=0$ such that \eqref{e:averageE} holds and $\bar{R}(x,t)\geq 0$ for almost every 
$(x,t)$. For a more precise definition, comparisons to the literature and further notions we refer to Section \ref{s:definitions}. 
In light of this interpretation of $\bar{R}$, it is natural to define the \emph{generalized energy tensor} of a subsolution $(\bar{v},\bar{p},\bar{R})$ to be the 
time-dependent tensor 
\begin{equation}\label{e:tensor}
\int_{\T}(\bar{v}\otimes \bar{v}+\bar{R})\,dx,
\end{equation}
and the associated \emph{generalized total energy} given by its trace (c.f.~\eqref{e:generalizedlocalenergy}):
$$
\overline{E}(t)=\frac{1}{2}\int_{\T}|\bar{v}|^2+\tr \bar{R}\,dx.
$$
In view of the previous discussion we call a subsolution $(\bar{v},\bar{p},\bar{R})$ \emph{admissible} if 
\begin{equation}\label{e:EnergyIneq}
\overline{E}(t)\leq \overline{E}(0)\quad\textrm{ for all $t>0$}.
\end{equation}
Observe that the system \eqref{e:averageE} is highly under-determined. 
An important question in the theory of turbulence is to obtain further restrictions on the tensor $\bar{R}$ in the form of constitutive (closure) relations. Thus an interesting question is whether there are additional constraints in the specific case where $\bar{R}$ arises -- in analogy with \eqref{e:Rstress} -- as a weak limit 
\begin{equation}\label{e:Reynolds}
\bar{R}=(\mathrm{w-lim}_{k\to\infty}v_k\otimes v_k)-v\otimes v,
\end{equation}
where $v_k\rightharpoonup v$ is a sequence of (admissible) H\"older continuous weak solutions. Indeed, weak convergence has long been considered as a useful tool to study ``deterministic turbulence'' \cite{Lax:1990eg}. 

Observe that if in \eqref{e:Reynolds} the sequence $v_k$ consists of Leray-Hopf weak solutions of the Navier-Stokes equations with vanishing viscosity $\nu_k\to 0$, then the limit $(\bar{v},\bar{p},\bar{R})$ will be an admissible subsolution\footnote{In this case $v$ arises as a weak* limit in the space $L^{\infty}(0,T;L^2(\T))$ and the limit in \eqref{e:Reynolds} is a weak* limit in the space of matrix-valued Radon measures. Nevertheless, $\overline{E}(t)$ is well-defined due to the energy inequality.}. This is well-known (see for instance \cite{Duchon:2000ti,Lions:1996vo}). Therefore the admissibility condition \eqref{e:EnergyIneq} together with the condition that $\bar{R}\geq 0$ gives rise to a natural class of subsolutions. 

It follows from \cite{DeLellis:2008vc,SzekelyhidiJr:2012dc} that no additional constraints on $\bar{R}$ exist for $L^\infty$ weak solutions, but for $C^\theta$ weak solutions this was open and in fact there were indications that constraints might exist \cite{Choffrut:2013gv,DeLellis:2012tz}. Our second main result shows that no additional constraints exist and that in fact \emph{any} positive definite tensor can arise as \eqref{e:Reynolds} from $C^{\sfrac15-\eps}$-weak solutions of Euler:

\begin{theorem}[h-principle]\label{t:hprinciple}
Let $(\bar{v},\bar{p},\bar{R})$ be a smooth solution of \eqref{e:averageE} on $\T\times[0,T]$ such that $\bar{R}(x,t)$ is positive definite for all $x,t$. Then there exists for any $\theta<1/5$ a sequence $(v_k,p_k)$ of weak solutions of \eqref{E:e} such that \eqref{e:Holder} holds, 
$$
v_k\overset{*}{\rightharpoonup} \bar{v}\quad\textrm{ and }\quad v_k\otimes v_k\overset{*}{\rightharpoonup} \bar{v}\otimes \bar{v}+\bar{R}\quad\textrm{ in }L^\infty
$$ 
uniformly in time and furthermore for all $t\in [0,T]$
$$
\int_{\T}v_k\otimes v_k\,dx=\int_{\T}(\bar{v}\otimes \bar{v}+\bar{R})\,dx.
$$
\end{theorem}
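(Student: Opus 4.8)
The plan is to prove Theorem \ref{t:hprinciple} by a convex integration scheme of Nash--De Lellis--Sz\'ekelyhidi type, iteratively constructing solutions $(v_q,p_q,\mR_q)$ of the Euler--Reynolds system
\begin{equation*}
\partial_t v_q + \div(v_q\otimes v_q) + \nabla p_q = -\div \mR_q, \qquad \div v_q = 0,
\end{equation*}
where $\mR_q$ is symmetric and $\|\mR_q\|_0 \to 0$ along a superexponentially growing sequence of frequencies $\lambda_q$. The starting point is $v_0 = \bar v$, $p_0 = \bar p$, $\mR_0 = \bar R$ — using positive definiteness of $\bar R$ crucially to guarantee $\mR_0$ lies in the interior of the cone of admissible stresses. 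The new ingredient, as announced in the abstract and Section \ref{ss:hprinciple}, is to replace the Beltrami flows of \cite{DeLellis:2011vr,DeLellis:2012tz} by a different family of stationary solutions of Euler as the building block in the perturbation $w_{q+1} = v_{q+1} - v_q$; I would first establish a \textbf{geometric lemma} showing that any positive definite symmetric matrix $R$ near $\Id$ can be written as a convex combination $R = \sum_j a_j^2(R)\, \xi_j\otimes\xi_j$ over finitely many rational directions $\xi_j \in \Z^3$, with smooth nonnegative coefficients $a_j$, such that for each $j$ there is an associated stationary flow (built from the chosen profile) oscillating in the direction $\xi_j$. The perturbation is then the sum over $j$ of amplitude $\propto \sqrt{\rho_q}\,a_j(\mR_q/\rho_q)$ times the rescaled profile at frequency $\lambda_{q+1}$, with a suitable scalar $\rho_q$ controlling the size, plus corrector terms ensuring $\div w_{q+1} = 0$ and handling the time derivative.

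The core of the iteration is the \textbf{inductive proposition}: given $(v_q,p_q,\mR_q)$ with the estimates $\|v_q\|_1 \lesssim \lambda_q^{1-\theta}$-type bounds and $\|\mR_q\|_0 \lesssim \delta_{q+1}$ (with $\delta_q = \lambda_q^{-2\theta}$ roughly), produce $(v_{q+1},p_{q+1},\mR_{q+1})$ with the same estimates shifted by one index. The steps I would carry out in order: (i) mollify $(v_q,\mR_q)$ at a length scale $\ell_q$ chosen to balance the mollification error against $\delta_{q+2}$; (ii) define the perturbation $w_{q+1}$ as above using the geometric lemma applied to $\Id - \mR_q^{smoothed}/\rho_q$ (so that $w_{q+1}\otimes w_{q+1}$ averages in fast variable to $\rho_q\Id - \mR_q$, which cancels the stress up to lower order); (iii) set $v_{q+1} = v_q + w_{q+1}$, absorb the $\rho_q\Id$ term into the pressure, and compute the new Reynolds stress $\mR_{q+1}$ by applying an inverse-divergence operator $\mathcal R$ to the various error contributions (transport error, Nash error from $\nabla v_q$, oscillation error, corrector errors); (iv) estimate each term using stationary-phase / commutator estimates and the bounds on $\mathcal R$ in H\"older spaces, choosing $\lambda_{q+1}$ large to close the loop; (v) verify the $C^1$-in-time bounds needed so that \eqref{e:Holder} holds uniformly in $t$.

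For the specific conclusions of Theorem \ref{t:hprinciple} beyond mere existence of $C^{1/5-\eps}$ solutions, two refinements are needed. First, the \textbf{weak-* convergence} $v_k \overset{*}{\rightharpoonup} \bar v$ and $v_k\otimes v_k \overset{*}{\rightharpoonup} \bar v\otimes\bar v + \bar R$: since $w_{q+1}$ oscillates at frequency $\lambda_{q+1}\to\infty$ with bounded $L^2$ norm, $w_{q+1}\rightharpoonup 0$, giving $v_q \rightharpoonup \bar v$; and $v_q\otimes v_q = \bar v\otimes \bar v + \sum_{j\le q}(\text{cross terms}) + \sum w_{j+1}\otimes w_{j+1}$, where the cross terms vanish weakly and $w_{j+1}\otimes w_{j+1} \rightharpoonup \rho_j \Id - \mR_j$ in the fast average; telescoping and using $\mR_j\to 0$, $\rho_j\to 0$ appropriately (or rather tracking that the accumulated contribution converges to $\bar R$) yields the claim — this requires arranging $\sum_j \rho_j$-type book-keeping so the limit is exactly $\bar v\otimes\bar v+\bar R$ rather than something larger, which is the standard ``adding up the stress'' computation. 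Second, the \textbf{exact energy identity} $\int_\T v_k\otimes v_k\,dx = \int_\T(\bar v\otimes\bar v+\bar R)\,dx$ for all $t$: this is enforced by choosing the amplitude functions $a_j$ and the scalar $\rho_q$ with an extra degree of freedom so that at each stage $\int_\T v_{q+1}\otimes v_{q+1}\,dx - \int_\T v_q\otimes v_q\,dx$ equals exactly the prescribed increment $-\int_\T \mR_{q+1}\,dx + \int_\T \mR_q\,dx$ (mirroring the energy-fixing trick in \cite{DeLellis:2012tz}), making the identity hold termwise. I expect the \textbf{main obstacle} to be step (iv) — the oscillation error estimate — since the new non-Beltrami profile will have a less favorable interaction structure than Beltrami flows (the latter exactly solve stationary Euler with the quadratic nonlinearity being a gradient), so one must verify that the leading quadratic self-interaction of the new building block is still a pressure gradient (or differs from one by a high-frequency term amenable to $\mathcal R$), and that the transport term along the coarse flow can be corrected; getting the error to scale like $\lambda_q^{something}/\lambda_{q+1}^{1-\theta}\cdot(\dots) \ll \delta_{q+2}$ while keeping $\theta$ up to $1/5$ is the quantitatively delicate point, and is precisely where the choice of the new stationary flows (versus Beltrami) must be shown to be at least as efficient.
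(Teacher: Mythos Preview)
Your overall plan --- convex integration with a new family of stationary building blocks replacing Beltrami flows --- is correct in spirit, but the paper organizes the argument quite differently, and your sketch misses the essential property of the new flows.

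The paper does \emph{not} run a single iteration with the new (Mikado) flows throughout. Instead it proceeds in three modular steps. First, a \emph{single} perturbation (Proposition~\ref{p:stristro}) using Mikado flows takes the given strict subsolution $(\bar v,\bar p,\bar R)$ with arbitrary positive definite $\bar R$ to a \emph{strong} subsolution, i.e.\ one whose Reynolds stress satisfies $|\mathring R|\le r_0\rho$ with $\rho=\tfrac13\tr R$ a function of $t$ only --- in other words $R/\rho$ is uniformly close to $\Id$. Second, an iteration (Proposition~\ref{p:stradapt}) using the \emph{old} Beltrami-based perturbation step of \cite{Buckmaster:2014ty}, packaged as Proposition~\ref{p:basic}, produces a $C^\theta$-\emph{adapted} subsolution. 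Third, another Beltrami-based iteration (Proposition~\ref{P:adaptsol}) passes from the adapted subsolution to a genuine $C^\theta$ weak solution. Each step carries $H^{-1}$ estimates on both $v$ and $v\otimes v + R$, and preserves the tensor average $\int_{\T}(v\otimes v+R)\,dx$ exactly; this yields the weak-$*$ convergence and the energy-tensor identity directly, without the telescoping bookkeeping you describe.

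The crucial point you have inverted is the role of the geometric lemma. Your lemma is stated for matrices \emph{near $\Id$} --- that is precisely the Beltrami restriction (Lemma~\ref{l:split}), and it \emph{cannot} handle the first step, since an arbitrary positive definite $\bar R(x,t)$ need not be close to any scalar multiple of $\Id$. The whole purpose of Mikado flows (Lemma~\ref{l:Mikado}, built on the Nash decomposition Lemma~\ref{L:geomNash}) is that the identity $\fint W\otimes W\,d\xi=R$ holds for $R$ ranging over \emph{any} prescribed compact subset of $\mathcal S^{3\times3}_+$, with no proximity to $\Id$ required. This is what allows one perturbation to absorb $\bar R-\delta\Id$ in a single stroke and land in the strong-subsolution regime where the Beltrami iteration applies. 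Your scheme as written stalls at $q=0$: with $\mR_0=\bar R$ there is in general no scalar $\rho_0$ for which $\mR_0/\rho_0$ lies in $B_{r_0}(\Id)$, so the coefficients $a_j$ are undefined.

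A direct iteration using Mikado flows at every stage is in principle feasible, but it would require redoing all the estimates of \cite{Buckmaster:2014ty} with the new profile; the paper avoids this entirely by confining Mikado to the single reduction step and then invoking the existing Beltrami machinery verbatim. Incidentally, the ``main obstacle'' you anticipate --- that the quadratic self-interaction of the new profile might not be a gradient --- does not arise: the Mikado tubes are pairwise disjoint, so $\div(W\otimes W)=0$ pointwise and the flows are exact stationary \emph{pressureless} solutions.
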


This result, which is new even for continuous solutions, can be interpreted as a precise analogue of the famous Nash-Kuiper theorem \cite{Nash:1954vt} on $C^1$ isometric embeddings. Indeed, in a nutshell the Nash-Kuiper theorem says that any smooth strictly short embedding of a closed $n$-dimensional Riemannian manifold $(M^n,g)\to \R^{n+1}$ can be approximated in the $C^0$-norm by $C^1$ isometric embeddings. Being strictly short amounts to the pointwise condition that the metric error $g-du\cdot du$ should be positive definite. In Theorem \ref{t:hprinciple} the corresponding condition is that the ``error'' $\bar{R}$ should be positive definite, so that $(\bar{v},\bar{p},\bar{R})$ plays the role of the strictly short map and the generalized energy tensor \eqref{e:tensor} plays the role of the given metric $g$. As the Euler equations \eqref{E:e} are in divergence-form, in terms of differentiability $C^1$ isometric maps correspond naturally to $C^0$ weak solutions of \eqref{E:e}. Thus, in analogy with the Nash-Kuiper result, Theorem \ref{t:hprinciple} says that any smooth strict subsolution can be weakly approximated by $C^0$ weak solutions with prescribed energy. For more information on the connection between the Nash-Kuiper iteration and the Euler equations we refer to the surveys \cite{DeLellis:2011vr,DeLellis:2015uq,SzekelyhidiJr:V8n9YaGH} and the 
lecture notes \cite{SzekelyhidiJr:2016wc}.

\subsection{Approximating arbitrary background flows}\label{ss:approximation}

Our Theorem \ref{t:hprinciple} can also be seen as a contribution towards understanding the approximability of arbitrary smooth solenoidal background flows $\bar{v}$ by  (H\"older-)continuous weak solutions of the Euler equations. Indeed, it is easy to see (for instance using the operator $\mathcal{R}$ from \eqref{d:DefOpR} below) that to any smooth divergence-free $\bar{v}=\bar{v}(x,t)$ with spatial mean zero (for all time $t$) there exist smooth $\bar{p},\mathring{R}$ such that $(\bar{v},\bar{p},\mathring{R})$ is a smooth solution of \eqref{e:averageE}, and $\mathring{R}$ is a symmetric and traceless. Moreover, setting
\begin{equation}\label{e:approxRrho}
\bar{R}(x,t)=\frac{1}{3}\rho(t)\Id+\mathring{R}(x,t),
\end{equation}
with an appropriate choice\footnote{It suffices that 
$$
\rho(t)>3\sup_{x\in \T}\sup_{\xi\in S^2}\bigl(-\mathring{R}(x,t)\xi\cdot\xi\bigr).
$$} of $\rho(t)>0$ we can ensure that $\bar{R}$ is positive definite for all $x,t$ so that $(\bar{v},\bar{p},\bar{R})$ satisfies the conditions of Theorem \ref{t:hprinciple}, and thence obtain a sequence of H\"older-continuous weak solutions $v_k$ with $v_k\overset{*}{\rightharpoonup} \bar{v}$.
Such a result has been obtained in \cite[Theorem A.1]{Isett:2014vl} in the non-periodic setting. However, in \cite[Theorem A.1]{Isett:2014vl} there is no associated control of the energy - although energy control can be easily obtained by adapting the proof of Theorem 1.1 in \cite{Isett:2014vl}, the energy obtained will be very large compared with the energy of $\bar{v}$.  

As explained in the introduction, there is a substantial qualitative difference between admissible and non-admissible weak solutions. Moreover, from Theorem \ref{t:hprinciple} we also obtain control of the energy in the form that for all $t$
\begin{equation*}
\fint_{\T}|v_k|^2\,dx=\fint_{\T}|\bar{v}|^2+\tr\bar{R}\,dx=\fint_{\T}|\bar{v}|^2\,dx+\rho(t).
\end{equation*}
On the other hand it is also quite clear that $\bar{p},\bar{R}$ are not uniquely determined, 
hence a natural problem is to estimate the minimum energy level required for admissible weak solutions $v_k$ that can approximate $\bar{v}$. In light of the identity 
$$
\lim_{k\to\infty}\int_{\T}|v_k-\bar{v}|^2\,dx=\lim_{k\to\infty}\int_{\T}|v_k|^2-|\bar{v}|^2\,dx
$$
this is related to the \emph{distance in the strong $L^2$ topology between the vectorfield $\bar{v}$ and the set of weak solutions}. 

In \cite[Theorem 1.1]{Choffrut:2013gv} a weaker version of Theorem \ref{t:hprinciple} was obtained, where a key difference is the more restrictive condition on $\bar{R}$ (see also Remark \ref{r:strongsub} below). The precise condition, stated in terms of the decomposition in \eqref{e:approxRrho}, is the following:
$$
\frac{1}{6}\rho(t)\Id+\mathring{R}(x,t)\textrm{ is positive definite for all $x,t$.}
$$
Therefore even the choice of $\bar{R}$ in \eqref{e:approxRrho} in combination with Theorem \ref{t:hprinciple} is an improvement. Moreover, the full generality of Theorem \ref{t:hprinciple} gives an implicit characterization of the smallest possible energy level in terms of the possible choices of $(\bar{p},\bar{R})$ which is essentially optimal in light of the discussion in Section \ref{ss:hprinciple} above.

As a final remark note the following special case of Theorem 1.2: if $\bar{v}$ is a smooth exact solution of the Euler equations, then $\bar{v}$ can be weakly approximated by H\"older-continuous weak solutions $v_k$ with $0<\sup_t\int|v_k|^2-|v|^2\,dx<\delta$ for arbitrary $\delta>0$. This result has been proved in \cite[Theorem 1.2]{Isett:2014vl}.

\subsection{Comments on the proofs}

One of the novelties of this paper is that we introduce a new class of perturbations which we call \emph{Mikado flows}. These serve the purpose of replacing the Beltrami flows in order to generate arbitrary Reynolds stresses. As in \cite{DeLellis:2013im,DeLellis:2012tz} and later improved in \cite{Isett:2013ux,Buckmaster:2014ty}, the basic iteration scheme consists of adding at each step a fast oscillating perturbation with profile
$$
W=W(R,\xi),
$$
where $\T\ni \xi\mapsto W(R,\xi)$ is a periodic stationary solution of Euler with the property that
\begin{equation}\label{e:WoW}
\langle W\otimes W\rangle =R.	
\end{equation}
(here $\langle\cdot\rangle$ denotes spatial average on $\T$). For a detailed exposition of these ideas we refer to the lecture notes \cite{SzekelyhidiJr:2016wc}. In all previous works Beltrami flows were used as the family of stationary flows for $W$. However Beltrami flows are not sufficiently rich to allow any positive definite matrix $R$ in \eqref{e:WoW} (see \cite{Choffrut:2013gv}), and consequently $R$ in the iteration was restricted to be in a neighbourhood of the identity matrix (c.f.~Lemma \ref{l:split} and Definition \ref{D:strosubsol} below). This restriction is avoided with our Mikado flows, which are indeed a sufficiently rich family. 

A second novel feature in our paper is that we identify a new class of \emph{adapted subsolutions}, where vanishing of the Reynolds tensor is allowed but is coupled to the blow-up of the $C^1$ norm at a specific rate - see Definition \ref{D:adapt}. A somewhat different notion of admissible subsolution was previously introduced in \cite{Daneri:2014hb}, but there the subsolution depends on a specific iteration scheme and cannot be used in our setting for Theorem \ref{T:dense}.

\section{Preliminaries}
\label{S:pre}
\subsection{Function spaces}
In the following $m=0,1,2,\dots,\,\alpha\in(0,1),$ and $\beta$ is a multi-index. We introduce the usual (spatial) H\"older norms as follows. First of all, for $f:\T\to\R^3$, the supremum norm is denoted by $\|f\|_0=\underset{\T}{\sup}f(x)$. The H\"older seminorms are defined as 
\begin{align}
 [f]_m&=\underset{|\beta|=m}{\max}\|D^\beta f\|_0,\\
[f]_{m+\alpha}&=\underset{|\beta|=m}{\max}\underset{x\neq y,t}{\sup}\frac{D^\beta f(x,t)-D^\beta f(y,t)}{|x-y|^\alpha},
\end{align}
where $D^\beta$ are space derivatives. The H\"older norms are then given by
\begin{align}
 \|f\|_m=\underset{j=0}{\overset{m}{\sum}}[f]_j\\
\|f\|_{m+\alpha}=\|f\|_m+[f]_{m+\alpha}.
\end{align}
Recall the standard interpolation inequalities
\begin{equation}
 [f]_s\leq C\|f\|_0^{1-\frac{s}{r}}[f]_r^{\frac{s}{r}}.
\end{equation}
Next, we recall that $H^1_0(\T)$ is the usual Sobolev space of periodic functions $f:\T\to\R^3$ with average zero $\int_{\T}f(x)\,dx=0$, and $H^{-1}(\T)$ denotes its dual space, with norm
$$
\|f\|_{H^{-1}}=\sup_{\|\varphi\|_{H_0^1}\leq 1}\int_{\T}f\varphi\,dx.
$$
 
In this paper we will consider the spatial H\"older norms of time-dependent functions $f:\T\times[0,T]\to\R^3$. These will be denoted 
as $[f(t)]_m$ and $\|f(t)\|_m$. If the estimates hold uniformly in time, the explicit time dependence will be omitted.

\subsection{Elliptic operators, Schauder estimates and stationary phase lemma}

In this section we recall the operator $\mathcal{R}$ from Section 4.5 in \cite{DeLellis:2013im}, which is used as a right inverse for the divergence operator on matrices.
\begin{definition}\label{d:DefOpR}
 Let $v\in C^{\infty}(\T;\R^3)$ be a smooth vector field. We then define $\mathcal Rv$ to be the matrix valued periodic function 
\[
 \mathcal Rv:=\frac{1}{4}(\nabla \mathcal Pu+(\nabla \mathcal Pu)^T)+\frac{3}{4}(\nabla u+(\nabla u)^T)-\frac{1}{2}(\div u)\Id,
\]
where $u\in C^\infty(\T;\R^3)$ is the solution of
\[
 \triangle u=v-\fint_{\T} v\quad \text{in $\T$}
\]
with $\int u=0$ and $\mathcal{P}$ is the Leray projection onto divergence-free fields with zero average.
\end{definition}

\begin{lemma}
 [$\mathcal R=\div^{-1}$]
For any $v\in C^\infty(\T;\R^3)$ we have
\begin{align}
 &\mathcal Rv(x) \text{ is a symmetric trace free matrix for each $x\in\T$};\label{e:Rproperty1}\\
&\div \mathcal Rv=v-\fint_{\T} v.\label{e:Rproperty2}
\end{align}

\end{lemma}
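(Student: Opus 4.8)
The plan is to verify both assertions by a direct computation from the explicit formula in Definition~\ref{d:DefOpR}. Symmetry is immediate, since each of the three summands defining $\mathcal{R}v$ is manifestly a symmetric matrix (two symmetrized gradients and a multiple of $\Id$), so the sum is symmetric. For the trace, I would take $\tr$ of the formula and use $\tr(\nabla w+(\nabla w)^T)=2\div w$: the $\mathcal{P}u$–term drops out because $\mathcal{P}u$ is divergence-free, while the remaining two terms contribute $\tfrac34\cdot 2\div u-\tfrac12\cdot 3\div u=0$. Together with symmetry this gives \eqref{e:Rproperty1}.

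For \eqref{e:Rproperty2} I would first record the elementary identities $\div\big(\nabla w+(\nabla w)^T\big)=\nabla(\div w)+\triangle w$ (from $\partial_j(\partial_i w_j+\partial_j w_i)$ componentwise) and $\div\big((\div u)\Id\big)=\nabla(\div u)$. Applying these with $w=\mathcal{P}u$ and $w=u$ and collecting terms,
$$
\div\mathcal{R}v=\tfrac14\,\triangle\mathcal{P}u+\tfrac34\,\triangle u+\tfrac14\,\nabla(\div u).
$$
Then I would use that $\triangle$ commutes with the Leray projection (both are Fourier multipliers), so $\triangle\mathcal{P}u=\mathcal{P}(\triangle u)=\mathcal{P}\big(v-\fint v\big)$, together with the Helmholtz decomposition on $\T$, namely $\mathcal{P}F=F-\nabla\triangle^{-1}\div F$ for any mean-zero $F$. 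Applying this to $F=\triangle u=v-\fint v$, and noting that $\div u$ has zero mean on $\T$ so that $\triangle^{-1}\triangle(\div u)=\div u$, yields $\triangle\mathcal{P}u=(v-\fint v)-\nabla(\div u)$. Substituting this above, the $\nabla(\div u)$ contributions cancel while the $v-\fint v$ terms add up with total coefficient $\tfrac14+\tfrac34=1$, giving $\div\mathcal{R}v=v-\fint v$.

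The argument is essentially bookkeeping, and I do not expect a genuine obstacle. The only points that require a little care are that $\triangle$ commutes with $\mathcal{P}$, that every inverse Laplacian acts on a mean-zero field — guaranteed by the normalizations $\int u=0$, $\triangle u=v-\fint v$, and $\int_{\T}\div u=0$ — and that the specific coefficients $\tfrac14,\tfrac34,-\tfrac12$ are precisely what makes the $\nabla(\div u)$ terms cancel in the last step. Smoothness of $u$, and hence of $\mathcal{P}u$, $\div u$ and every term in the formula, follows from elliptic regularity since $v$ is smooth and $v-\fint v$ is mean-zero.
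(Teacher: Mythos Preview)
Your proposal is correct; the direct computation you outline is the standard verification and all the bookkeeping checks out. Note, however, that the paper does not actually supply a proof of this lemma: it is simply recalled from \cite{DeLellis:2013im} (Section~4.5) and stated without argument, so there is nothing in the paper to compare against beyond the definition itself.
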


Moreover, for nonlinear phase functions we have the following version of the stationary phase lemma from \cite{DeLellis:2013im}, whose proof is a simple modification of the proof of Proposition 5.2 in \cite{DeLellis:2013im} and standard Schauder estimates.

\begin{lemma}[Stationary phase lemma]\label{L:stat}
Let $\varphi\in C^\infty(\R^3)$ such that there exist $c_0>1$ and $k_0\in \Z^3$ such that, for all $x\in\R^3$ and $k\in \Z^3$
\begin{align}
 c_0^{-1}&\leq|\nabla \varphi(x)|\leq c_0,\label{E:varphiC0}\\
\varphi(x+2\pi k)&=\varphi(x)+2\pi k\cdot k_0 .
\end{align}
Then, 
\begin{itemize}
 \item [(i)] for any $a\in C^\infty(\T)$ and $N\in\N$
\begin{equation}\label{E:stati}
 \Big|\int_{\T}a(x)e^{i\lambda\varphi(x)}\,dx\Big|\leq C\frac{[a]_N+\|a\|_0[\nabla\varphi]_{N}}{\lambda^N}
\end{equation}\\
\item[(ii)] the solution $u\in C^\infty(\T)$ of
\begin{equation}
 \left\{\begin{aligned}
         \triangle u&=ae^{i\lambda\varphi}-\fint_{\T} ae^{i\lambda \varphi}\\
         \fint_{\T} u&=0
        \end{aligned}\right.
\end{equation}
satisfies 
\begin{align}
 [\nabla u]_\alpha&\leq C\Big\{\frac{1}{\lambda ^{1-\alpha}}\|a\|_0+\frac{1}{\lambda^{N-\alpha}}(\|a\|_0[\nabla\varphi]_N+[a]_N)\notag\\
&+\frac{1}{\lambda^{N}}(\|a\|_0[\nabla\varphi]_{N+\alpha}+[a]_{N+\alpha})\Big\}.\label{E:statii}
\end{align}

\end{itemize}
In particular, 
\begin{align}
 [\mathcal R(ae^{i\lambda\varphi})]_\alpha&\leq C\Big\{\frac{1}{\lambda ^{1-\alpha}}\|a\|_0+\frac{1}{\lambda^{N-\alpha}}(\|a\|_0[\nabla\varphi]_N+[a]_N)\notag\\
&+\frac{1}{\lambda^{N}}(\|a\|_0[\nabla\varphi]_{N+\alpha}+[a]_{N+\alpha})\Big\}.\label{E:statiii}
\end{align}
The constant $C$ depends on $c_0$, $N\in \N$ and $\alpha\in (0,1)$.
\end{lemma}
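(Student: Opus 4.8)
The plan is to prove Lemma \ref{L:stat} by reducing to the known results of \cite{DeLellis:2013im} plus standard Schauder estimates. The key point is that parts (i) and (ii) concern a \emph{nonlinear} phase $\varphi$, whereas Proposition 5.2 in \cite{DeLellis:2013im} is stated (in the form we need) for phases of the form $k\cdot x$; so the main work is to localize and freeze the gradient of $\varphi$. I would begin with part (i). Write $e^{i\lambda\varphi(x)} = \frac{1}{i\lambda|\nabla\varphi|^2}\nabla\varphi\cdot\nabla\bigl(e^{i\lambda\varphi}\bigr)$, which is legitimate because of the nondegeneracy \eqref{E:varphiC0}, and integrate by parts on $\T$:
\begin{equation*}
\int_{\T}a\,e^{i\lambda\varphi}\,dx = \frac{1}{i\lambda}\int_{\T}\div\!\Bigl(\frac{a\,\nabla\varphi}{|\nabla\varphi|^2}\Bigr)e^{i\lambda\varphi}\,dx.
\end{equation*}
Iterating this $N$ times produces a factor $\lambda^{-N}$ in front of an integral whose integrand is a universal expression in $a$, $\nabla\varphi$, $1/|\nabla\varphi|$ and their derivatives up to order $N$; here the quasi-periodicity hypothesis $\varphi(x+2\pi k)=\varphi(x)+2\pi k\cdot k_0$ guarantees that $\nabla\varphi$ is genuinely $\T$-periodic, so no boundary terms appear. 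Estimating this integrand in $C^0$ by the product/chain rule and collecting the top-order contributions (which are $[a]_N$ and $\|a\|_0[\nabla\varphi]_N$, the lower-order ones being absorbed since $c_0>1$) yields \eqref{E:stati}. The constant depends only on $c_0$, $N$, exactly as claimed.

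\emph{For part (ii)} I would argue by Schauder estimates combined with the decay just proved. The solution $u$ of the Poisson equation can be written using the periodic Newtonian potential; since the right-hand side has zero mean, $\nabla u = \nabla\triangle^{-1}(a e^{i\lambda\varphi})$. Apply the standard interior Schauder estimate for $\triangle^{-1}$: one gets $[\nabla u]_\alpha \lesssim [a e^{i\lambda\varphi}]_{\alpha-1}$ in a suitable negative-order sense, but it is cleaner to follow \cite{DeLellis:2013im} and split $a e^{i\lambda\varphi}$ into a high-frequency piece, handled by differentiating the explicit kernel and using that each derivative of $e^{i\lambda\varphi}$ costs a factor $\lambda|\nabla\varphi|$, and a genuinely oscillatory remainder to which part (i) applies with $N$ in place of $1$. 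The bookkeeping is exactly as in the proof of Proposition 5.2 of \cite{DeLellis:2013im}: the leading term $\lambda^{-(1-\alpha)}\|a\|_0$ comes from the gain of one derivative in $\triangle^{-1}$ against the loss of $\alpha$ derivatives in the H\"older seminorm and the loss of $\lambda$ from one oscillation, while the terms with $\lambda^{-(N-\alpha)}$ and $\lambda^{-N}$ are the contributions of the $N$-fold integration-by-parts tails, carrying the seminorms $[\nabla\varphi]_N$, $[a]_N$ and $[\nabla\varphi]_{N+\alpha}$, $[a]_{N+\alpha}$ respectively. This gives \eqref{E:statii}.

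\emph{Finally}, the estimate \eqref{E:statiii} for $\mathcal{R}(a e^{i\lambda\varphi})$ is immediate: by Definition \ref{d:DefOpR}, $\mathcal{R}v$ is built from $\nabla\mathcal{P}u$, $\nabla u$ and $(\div u)\Id$ where $\triangle u = v - \fint v$, and $\mathcal{P}$ is a Calder\'on--Zygmund operator bounded on $C^\alpha$; hence $[\mathcal{R}v]_\alpha \lesssim [\nabla u]_\alpha$ and \eqref{E:statiii} follows from \eqref{E:statii}. The constant dependence on $c_0$, $N$, $\alpha$ is inherited at each step.

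\emph{The main obstacle} I anticipate is purely organizational rather than conceptual: keeping track, through $N$ successive integrations by parts, of the precise combination of norms of $a$ and $\nabla\varphi$ that appears, and verifying that all the intermediate-order terms are dominated by the claimed top-order ones using $c_0>1$ and interpolation. There is no new idea beyond \cite{DeLellis:2013im}; the statement is explicitly flagged in the excerpt as ``a simple modification'' of Proposition 5.2 there, so the proof is essentially a careful transcription with the nonlinear phase handled by the divergence-form integration by parts above.
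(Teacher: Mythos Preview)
Your proposal is correct and follows essentially the same approach as the paper: repeated integration by parts via $e^{i\lambda\varphi}=\tfrac{1}{i\lambda|\nabla\varphi|^2}\nabla\varphi\cdot\nabla e^{i\lambda\varphi}$ for part (i), then Schauder estimates together with the same reference to Proposition~5.2 of \cite{DeLellis:2013im} for part (ii). The paper organizes the iteration by setting $a_0=a$, $a_n=-\div\bigl(a_{n-1}\nabla\varphi/|\nabla\varphi|^2\bigr)$ and recording an explicit decomposition of $ae^{i\lambda\varphi}$ as a sum of exact divergences plus a remainder of order $\lambda^{-N}$, which is then fed directly into the Schauder estimate---but this is precisely your $N$-fold integration by parts written out; note also that the ``localize and freeze the gradient'' idea you mention in your preamble is not actually needed or used, since the global integration by parts already handles the nonlinear phase.
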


\begin{proof}
 Let $a_0=a$ and 
\[
a_{n}=-\div \Big(a_{n-1}\frac{\nabla\varphi}{|\nabla\varphi|^2}\Big),\quad\forall\,n=1,\dots, N.
\]
It follows by induction on $N$ that
\begin{equation}\label{E:a0formula}
 a_0e^{i\lambda\varphi}=\sum_{n=0}^{N-1}\div\Big(\frac{a_n\nabla\varphi}{|\nabla\varphi|^2}e^{i\lambda\varphi}\Big)\frac{1}{(i\lambda)^{n+1}}+\frac{1}{(i\lambda)^N} a_Ne^{i\lambda\varphi}\,.
\end{equation}
Moreover, again by induction, for all $j=0,\dots,N$ we have
%\begin{align}
% \Big[a_N\frac{\nabla\varphi}{|\nabla\varphi|^2}\Big]_s&\leq[a_{N-k}]_{k+s}+\sum_{\underset{0\leq j,m_0,\dots,m_k\leq k}{m_0+\dots+m_k+j=k}}[a_{N-k}]_j[\nabla\varphi]_{m_0}\dots[\nabla\varphi]_{m_k+s}\\
%&\leq C(N)([a_0]_{N+s}\|\nabla\varphi\|_0+\|a_0\|_0[\nabla\varphi]_{N+s})\\
\begin{align}
\Big[a_N\frac{\nabla\varphi}{|\nabla\varphi|^2}\Big]_s&\leq C(j,N,c_0)([a_j]_{N-j+s}+\|a_j\|_0[\nabla\varphi]_{N-j+s}).\label{E:a_N2}
%&\overset{\eqref{E:varphiC0}}{\leq} C(N,c_0)([a_0]_{N+s}+\|a_0\|_0[\nabla\varphi]_{N+s}).\label{E:a_N2}
\end{align}
Then, (i) immediately follows.

According to standard Schauder estimates,
 \begin{equation}
  [\nabla u]_\alpha\leq C \frac{1}{\lambda}\sum_{n=0}^{N-1}\frac{1}{\lambda^{n}}\Big[\frac{a_n\nabla\varphi}{|\nabla\varphi|^2}e^{i\lambda\varphi}\Big]_\alpha+\frac{1}{\lambda^N} \Big[a_Ne^{i\lambda\varphi}\Big]_\alpha+\Big|\fint_{\T}ae^{i\lambda\varphi}\Big|
 \end{equation}
which using $(i)$ and \eqref{E:a_N2} gives $(ii)$, as in the proof of Proposition 5.2 in \cite{DeLellis:2013im}.

\end{proof}

\subsection{Mikado flows}
\label{Ss:mikado}

In this section we introduce a new family of periodic stationary solutions of the Euler equations whose spatial averages will be used to absorb the Reynolds stress of general subsolutions (see Section \ref{s:definitions}). In the following $ \SS^{3\times3}_+$ denotes the set of positive definite symmetric $3\times 3$ matrices. 

\begin{lemma}\label{l:Mikado}For any compact subset $\mathcal N\subset\subset \SS^{3\times3}_+$
there exists a smooth vector field 
$$
W:\mathcal N\times \T\to \R^3,\quad i=1,2
$$
such that, for every $R\in\mathcal N$ 
\begin{equation}\label{e:Mikado}
\left\{\begin{aligned}
\div_\xi(W(R,\xi)\otimes W(R,\xi))&=0,\\
\div_\xi W(R,\xi)&=0,
\end{aligned}\right.
\end{equation}
and
\begin{eqnarray}
	\fint_{\T} W(R,\xi)\,d\xi&=&0,\label{e:MikadoW}\\
    \fint_{\T} W(R,\xi)\otimes W(R,\xi)\,d\xi&=&R.\label{e:MikadoWW}
\end{eqnarray}
\end{lemma}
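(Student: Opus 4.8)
The plan is to build $W(R,\cdot)$ as a finite superposition of what I will call \emph{pipe flows}. For a rational direction $\nu\in S^2\cap\mathbb{Q}^3$ the line $\R\nu$ descends to a closed geodesic on $\T$, and if $\phi\in C^\infty(\T)$ is constant along the $\nu$-direction then $\phi\,\nu$ is a divergence-free stationary solution of the Euler equations with constant pressure: indeed $\div_\xi(\phi\,\nu)=(\nu\cdot\nabla_\xi)\phi=0$, while $\phi\,\nu\otimes\phi\,\nu=\phi^2\,\nu\otimes\nu$ satisfies $\div_\xi(\phi^2\,\nu\otimes\nu)=(\nu\cdot\nabla_\xi)(\phi^2)\,\nu=0$, both because $\phi$ and $\phi^2$ are $\nu$-invariant. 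If, in addition, $\phi$ is supported in a thin tube around the geodesic, and the tubes belonging to finitely many directions $\nu_1,\dots,\nu_N$ are made pairwise disjoint, then in a sum $W=\sum_k\Gamma_k(R)\,\phi_k\,\nu_k$ all the mixed products $\phi_j\phi_k$ with $j\ne k$ vanish identically; hence $W\otimes W=\sum_k\Gamma_k(R)^2\phi_k^2\,\nu_k\otimes\nu_k$ is again a sum of pipe flows and \eqref{e:Mikado} holds automatically. So it remains only to choose the directions and the amplitudes $\Gamma_k$ so that \eqref{e:MikadoW} and \eqref{e:MikadoWW} are met.

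For the amplitudes I would invoke the standard geometric decomposition lemma (as in \cite{DeLellis:2013im}), extended from a small ball around $\Id$ to the compact set $\mathcal N$ by a smooth partition-of-unity argument: there exist finitely many pairwise distinct directions $\nu_1,\dots,\nu_N\in S^2\cap\mathbb{Q}^3$ and functions $\Gamma_1,\dots,\Gamma_N\in C^\infty(\mathcal N)$ with
\[
R=\sum_{k=1}^N\Gamma_k(R)^2\,\nu_k\otimes\nu_k\qquad\text{for all }R\in\mathcal N.
\]
The $\nu_k$ may be taken rational because $S^2\cap\mathbb{Q}^3$ is dense in $S^2$ and, $\mathcal N$ lying in the open cone $\SS^{3\times3}_+$, the decomposition can be re-solved after a small perturbation of the directions; and no square root of a partition-of-unity function is needed, since one keeps the smooth functions $\Gamma_k$ themselves rather than their squares. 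Next, for each $k$ I would let $\pi_k:\T\to\mathbb{T}^2_k:=\R^3/(\R\nu_k+\Z^3)$ be the projection collapsing the $\nu_k$-direction; this is a circle bundle whose fibres are exactly the closed $\nu_k$-geodesics. Fix a small ball $B_k\subset\mathbb{T}^2_k$ and $\psi_k\in C^\infty_c(B_k)$ with $\fint_{\mathbb{T}^2_k}\psi_k=0$ and, after rescaling, $\fint_{\mathbb{T}^2_k}\psi_k^2=1$. Since the geodesics point in pairwise distinct directions, generic translations make any two of them disjoint; choosing translates $z_1,\dots,z_N\in\T$ one by one, each avoiding a null set, and then shrinking all the balls $B_k$, the tubes $U_k:=z_k+\pi_k^{-1}(B_k)\subset\T$ become pairwise disjoint. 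I then set $\phi_k(\xi):=\psi_k\bigl(\pi_k(\xi-z_k)\bigr)$, which is still smooth, supported in $U_k$, and $\nu_k$-invariant, and define $W(R,\xi):=\sum_{k=1}^N\Gamma_k(R)\,\phi_k(\xi)\,\nu_k$.

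Finally I would verify the four properties. By the computation of the first paragraph $\div_\xi W=0$, and, using disjointness of the $U_k$, $\div_\xi(W\otimes W)=\div_\xi\bigl(\sum_k\Gamma_k(R)^2\phi_k^2\,\nu_k\otimes\nu_k\bigr)=0$, which is \eqref{e:Mikado}. Since each $\phi_k$ has zero mean on $\T$ (by Fubini along the circle bundle and $\fint_{\mathbb{T}^2_k}\psi_k=0$), one gets $\fint_\T W(R,\xi)\,d\xi=\sum_k\Gamma_k(R)\,\nu_k\fint_\T\phi_k\,d\xi=0$, which is \eqref{e:MikadoW}. Using the disjoint supports once more together with $\fint_\T\phi_k^2\,d\xi=\fint_{\mathbb{T}^2_k}\psi_k^2=1$ yields $\fint_\T W\otimes W\,d\xi=\sum_{k=1}^N\Gamma_k(R)^2\,\nu_k\otimes\nu_k=R$ by the geometric lemma, i.e.\ \eqref{e:MikadoWW}. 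Smoothness of $W$ on $\mathcal N\times\T$ is immediate since $\Gamma_k\in C^\infty(\mathcal N)$, $\phi_k\in C^\infty(\T)$ and the two variables are uncoupled. I expect the only genuinely delicate point to be arranging the tubes $U_k$ to be simultaneously disjoint: this is precisely where the rationality of the directions enters (so that the geodesics are closed curves admitting thin tubular neighbourhoods) together with the freedom to translate the pipes independently, and it is the reason the $\nu_k$ must be chosen rational in the geometric lemma. Everything else reduces to the single fact that a pipe flow differentiates only in its own direction.
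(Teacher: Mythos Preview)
Your proof is correct and follows essentially the same strategy as the paper: build $W$ as a finite sum of disjoint pipe flows along closed rational geodesics, with amplitudes coming from a geometric decomposition of $R$ into rank-one tensors. The only notable difference is that the paper invokes the Nash-type decomposition (Lemma~\ref{L:geomNash}) directly over an arbitrary compact $\mathcal N\subset\SS^{3\times3}_+$ with integer directions $k\in\Z^3$, whereas you obtain it by patching the near-identity lemma from \cite{DeLellis:2013im} via a partition of unity and then perturbing to rational unit directions; both routes lead to the same ansatz \eqref{E:wdense}, and the verification of \eqref{e:Mikado}--\eqref{e:MikadoWW} proceeds identically.
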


The first step in the proof of Lemma \ref{l:Mikado} is the following geometric lemma from \cite{Nash:1954vt} Lemma 1, see also \cite{SzekelyhidiJr:2014tu} Lemma 1.9:
\begin{lemma}\label{L:geomNash}
For any compact subset $\mathcal N\subset\subset \SS^{3\times3}_+$ there exists $\lambda_0\geq1$ and smooth functions $\Gamma_k\in C^\infty(\mathcal N;[0,1])$ for any $k\in\Z^3$ with $|k|\leq\lambda_0$ such that 
\begin{equation}\label{E:Rrepr}
R=\underset{k\in\Z^3,|k|\leq\lambda_0}{\sum}\Gamma_k^2(R)k\otimes k\quad \textrm{ for all }R\in\mathcal{N}.
\end{equation}
\end{lemma}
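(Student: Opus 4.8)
\textbf{Plan of proof for Lemma~\ref{L:geomNash}.} The statement is a purely local, finite-dimensional fact: it says that near any positive definite symmetric matrix one can write $R$ as a convex-type combination of the fixed rank-one matrices $k\otimes k$ with $k\in\Z^3$, $|k|\le\lambda_0$, with smooth coefficients $\Gamma_k^2(R)$. The plan is to first establish the decomposition pointwise for a single $R_0\in\SS^{3\times3}_+$ via a dimension/span argument, then promote it to a smooth family over a neighbourhood using the implicit function theorem (or simply a smooth right inverse of a surjective linear map), and finally patch together finitely many such neighbourhoods covering the compact set $\mathcal N$ with a smooth partition of unity.

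\textbf{Step 1: the span is everything.} The six-dimensional space $\SS^{3\times3}$ of symmetric matrices is spanned by rank-one symmetric matrices $k\otimes k$; since such matrices with rational (hence, after scaling, integer) $k$ are dense among all $k\otimes k$, one can pick finitely many integer vectors $k_1,\dots,k_M$ so that $\{k_j\otimes k_j\}_{j=1}^M$ spans $\SS^{3\times3}$. Let $\lambda_0$ be an upper bound for $|k_j|$. Consider the linear map $L:\R^M\to\SS^{3\times3}$, $L(c)=\sum_j c_j\,k_j\otimes k_j$, which is then surjective. The key classical observation (this is exactly Nash's Lemma~1) is that the open convex cone $\SS^{3\times3}_+$ lies in the \emph{interior} of the cone generated by $\{k_j\otimes k_j\}$ with \emph{nonnegative} coefficients: indeed any $R\in\SS^{3\times3}_+$ can be written as $R=\sum \mu_\ell\, v_\ell\otimes v_\ell$ with $\mu_\ell>0$ (spectral decomposition), and each $v_\ell\otimes v_\ell$ can in turn be perturbed to, and approximated by nonnegative combinations of, the fixed $k_j\otimes k_j$ by density; an open-ness argument then shows one may in fact take strictly positive coefficients. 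Hence for each $R_0\in\mathcal N$ there is $c^{(0)}\in(0,\infty)^M$ with $L(c^{(0)})=R_0$.

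\textbf{Step 2: smooth dependence near a single point.} Fix $R_0\in\mathcal N$ and $c^{(0)}\in(0,\infty)^M$ as above. Since $L$ is linear and surjective, it admits a linear right inverse $L^\dagger$; the affine map $R\mapsto c^{(0)}+L^\dagger(R-R_0)$ is smooth (indeed affine) in $R$, satisfies $L(c^{(0)}+L^\dagger(R-R_0))=R$, and by continuity its components remain strictly positive for $R$ in a small ball $B_{R_0}\subset\SS^{3\times3}_+$ around $R_0$. Relabelling $k_j$ as $k$ and writing $c_k(R)$ for these components, and setting $\Gamma_k(R):=\sqrt{c_k(R)}$ — well-defined and smooth since $c_k(R)>0$ on $B_{R_0}$ — we obtain \eqref{E:Rrepr} locally: $R=\sum_{|k|\le\lambda_0}\Gamma_k^2(R)\,k\otimes k$ for all $R\in B_{R_0}$. (Coefficients of $k$'s not among the chosen $k_j$ are set to zero.)

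\textbf{Step 3: globalize by partition of unity.} The balls $\{B_{R_0}\}_{R_0\in\mathcal N}$ cover the compact set $\mathcal N$, so finitely many $B_1,\dots,B_P$ suffice; let $\{\psi_p\}$ be a smooth partition of unity on (a neighbourhood of) $\mathcal N$ subordinate to them, and let $c_k^{(p)}(R)$ be the local coefficients on $B_p$. Define $c_k(R)=\sum_p \psi_p(R)\,c_k^{(p)}(R)$, which is smooth, nonnegative (a convex combination of nonnegative numbers), and still satisfies $\sum_k c_k(R)\,k\otimes k=\sum_p\psi_p(R)R=R$ by linearity of $L$. After rescaling (dividing all coefficients by a fixed large constant and absorbing it into a rescaling of the $k$'s is not available since $k$ must be integer — instead simply note the $c_k$ are bounded on the compact $\mathcal N$, so dividing the decomposition is unnecessary; if the normalization $\Gamma_k\le 1$ is wanted one enlarges $\lambda_0$ so that each used $k\otimes k$ can be split as a sum of several copies, or one rescales the target at the outset). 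Setting $\Gamma_k:=\sqrt{c_k}\in C^\infty(\mathcal N;[0,1])$ (after such normalization) gives exactly \eqref{E:Rrepr}.

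\textbf{Main obstacle.} The only substantive point is Step~1 — verifying that $\SS^{3\times3}_+$ sits in the \emph{open} positive cone spanned by the finitely many integer rank-ones, so that one can choose \emph{strictly positive} coefficients (this strictness is what lets $\Gamma_k=\sqrt{c_k}$ be smooth). Everything else is soft: surjectivity of a linear map, its linear right inverse, and a partition of unity. As the lemma is quoted verbatim from \cite{Nash:1954vt} and \cite{SzekelyhidiJr:2014tu}, I would in fact simply cite those references for Step~1 and present Steps~2–3 as the routine localization argument.
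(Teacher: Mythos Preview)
The paper does not actually prove Lemma~\ref{L:geomNash}; it simply quotes it from \cite{Nash:1954vt} (Lemma~1) and \cite{SzekelyhidiJr:2014tu} (Lemma~1.9). Your sketch is the standard argument from those references and is essentially correct in outline.

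One imprecision is worth flagging. In Step~1 you assert that for a \emph{fixed} finite set $k_1,\dots,k_M\in\Z^3$ the whole open cone $\SS^{3\times3}_+$ lies in the interior of the positive cone generated by $\{k_j\otimes k_j\}$. This is false: that cone is polyhedral and hence a strictly smaller closed subcone of $\SS^{3\times3}_{\geq 0}$, so there are positive definite matrices outside its interior. What \emph{is} true, and what you actually use, is that for the given \emph{compact} $\mathcal N\subset\subset\SS^{3\times3}_+$ one can choose $k_1,\dots,k_M$ (depending on $\mathcal N$) so that every $R\in\mathcal N$ has a decomposition with strictly positive coefficients; this follows from your density/spectral argument combined with compactness. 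With this correction Steps~2--3 go through as written, because the same family $\{k_j\}$ is used on every chart, the glued coefficient $c_k$ stays strictly positive on $\mathcal N$, and hence $\sqrt{c_k}$ is smooth. Had you instead allowed different $k_j$'s on different charts, the square root would in general fail to be smooth where the patched $c_k$ vanishes; the standard remedy (and this is what is done in \cite{SzekelyhidiJr:2014tu}) is to take a partition of unity with $\sum_p\psi_p^2=1$, set $\Gamma_k^{(p)}=\psi_p\sqrt{c_k^{(p)}}$, and make the index sets for different $p$ disjoint by replacing $k$ with $2^p k$ on the $p$-th chart.
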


\begin{proof}[Proof of Lemma \ref{l:Mikado}]
Then, let us look for our vector field $W(R,\cdot)$ among the vector fields of the form
\begin{equation}\label{E:wdense}
W(R,\xi)=\underset{k\in\Z^3,|k|\leq\lambda_0}{\sum}\Gamma_k(R)\psi_k(\xi)k
\end{equation}
where $\psi_k(\xi)=g_k(\mathrm{dist}(\xi, \ell_{k,{p_k}}))$ with $g_k\in C^\infty_c([0,r_k))$, $r_k>0$, where $\ell_{k,{p_k}}$ is the $\T$-periodic extension of the line $\{p_k+tk:\,t\in\R\}$ passing through $p_k$ in direction $k$. Since there are only a finite number of such lines, we may choose $p_k$ and $r_k>0$ in such a way that 
\begin{equation}\label{e:disjoint}
\mathrm{supp }\,\psi_i\cap\mathrm{supp }\,\psi_j=\emptyset\qquad\text{for all $i\neq j$.}
\end{equation}
Thus $W$ consists of a finite collection of disjoint straight tubes such that in each tube $W$ is a straight pipe flow and outside the tubes $W=0$.
In particular $W$ satisfies the stationary pressureless Euler equations \eqref{e:Mikado}.
Furthermore, the profile functions $g_k$ will be chosen so that $\int_{\T}\psi_k(\xi)\,d\xi=0$ and
$$
\fint_{\T} \psi_k^2(\xi)\,d\xi=1\quad\textrm{ for all $k$}.
$$
Then \eqref{e:MikadoW} is easily satisfied, and because of \eqref{e:disjoint} we also have
$$
\fint_{\T}W\otimes W\,d\xi=\sum_{k}\fint_{\T}\Gamma_k^2(R)\psi_k^2(\xi)k\otimes k\,d\xi=\sum_{k}\Gamma_k^2(R)k\otimes k=R.
$$
Therefore \eqref{e:MikadoWW} is satisfied.
\end{proof}

\section{Subsolutions}\label{s:definitions}

In this section we introduce several notions of subsolutions. 
 
\begin{definition}[Subsolution]\label{d:subsolution} 
A \emph{subsolution} is a triple 
$$
(v,p,R):\T\times(0,T)\to \R^3\times\R\times\SS^{3\times3}
$$
such that $v\in L^2_{loc}$, $R\in L^1_{loc}$, $p$ is a distribution, the equations
\begin{align}\label{E:er1}
\partial_t v+\div (v\otimes v)+\nabla p&=-\div R\notag\\
\div v&=0
\end{align}
hold in the sense of distributions in $\T\times(0,T)$, and moreover $R\geq 0$ a.e. 
We call a subsolution \emph{strict} if $R>0$ a.e.
\end{definition}

In the above definition $R\geq 0$ a.e. means $R(x,t)$ is positive semi-definite for almost every $(x,t)$.

\begin{remark}
Note that this definition agrees with the notion of subsolution introduced in \cite{DeLellis:2011vr} (Definition 2.3) in the following sense. Recall that a triple 
$$
(v,u,q):\T\times[0,T]\to \R^3\times\SS^{3\times3}_0\times\R
$$
is called a subsolution with respect to energy density $\bar{e}=\bar{e}(x,t)\geq 0$ if 
\begin{align*}
\partial_t v+\div u+\nabla q&=0\\
\div v&=0
\end{align*}
and 
$$
v\otimes v-u\leq \frac{2}{3}\bar{e}\,\Id\quad{a.e.}
$$
Given such a triple $(v,u,q)$ we obtain a subsolution in the sense of Definition \ref{d:subsolution}
by setting
$$
R=\frac{2}{3}\bar{e}\,\Id-v\otimes v+u,\quad p=q-\frac{2}{3}\bar{e}.
$$
Conversely, any subsolution $(v,p,R)$ defines a subsolution $(v,u,q)$ with energy density
\begin{equation}\label{e:generalizedlocalenergy}
\bar{e}=\frac{1}{2}\left(\tr R+|v|^2\right)=\frac{1}{2}\tr(R+v\otimes v)
\end{equation}
by setting 
$$
u=R-\frac{2}{3}\bar{e}\,\Id+v\otimes v,\quad q=p+\frac{2}{3}\bar{e}.
$$
\end{remark}

Next, we look at two more notions of subsolutions.
In the following, we denote by $\mR$ the traceless part of the tensor $R$, as in \cite{DeLellis:2013im,DeLellis:2012tz,Buckmaster:2014ty}.
\begin{definition}[Strong subsolution]\label{D:strosubsol}
A \emph{strong subsolution} is a subsolution $(v,p,R)$ such that in addition $\tr R$ is a function of $t$ only, and if \begin{equation}\label{e:Rstr3}
\rho(t):=\tfrac{1}{3}\tr R(t),
\end{equation}
then
\begin{equation}\label{e:Rdeltaest}
\Bigl|\mathring{R}(x,t)\Bigr|\leq r_0\rho(t)\textrm{ for all $(x,t)$,}
\end{equation}
where $0<r_0<1$ is the radius in Lemma 3.2 in \cite{DeLellis:2013im} (Lemma \ref{l:split} below).
\end{definition}

Observe that, writing
$$
R(x,t)=\rho(t)\Id+\mathring{R}(x,t)
$$
with $\tr\mathring{R}=0$, inequality \eqref{e:Rdeltaest} is equivalent to
$$
\Big|\frac{R(x,t)}{\rho(t)}-\Id\Big|\leq r_0
$$ 
provided $\rho(t)>0$.
Note also that, if a symmeric tensor $R$ satisfies this inequality, then (since $r_0\leq 1$)
\begin{equation}\label{e:Rvsrho}
|R|\leq 2\rho.	
\end{equation}

\begin{remark}\label{r:strongsub}
The notion of strong subsolution and in particular condition \eqref{e:Rdeltaest} is motivated by the constructions in \cite{DeLellis:2013im,DeLellis:2012tz,Buckmaster:2014ty}, based on Beltrami flows - see Proposition \ref{p:basic} below. Furthermore, although not equivalent, our definition of strong subsolution is also closely related to the definition given in \cite{Choffrut:2013gv}. Also in that paper the motivation was to have a notion of subsolution to which an iteration scheme based on Beltrami flows can be applied. 
\end{remark}

\bigskip

The definition of strong subsolution involves the radius $r_0$, which appears in the geometric decomposition in Lemma 3.2 from \cite{DeLellis:2013im}. In the rest of this paper we will be repeatedly applying perturbations to strong subsolutions, under which the inequality \eqref{e:Rdeltaest} is not stable. To circumvent this issue, we will use a collection of smaller radii $r_3<r_2<r_1<r_0$, whose numerical value is not important, we will only require that
\begin{equation}\label{E:r3r2r1}
r_1\leq\frac14r_0,\quad r_2<\frac{r_1}{2},\quad r_3<\frac{r_2}{2}.
\end{equation}

Our first result allows the weak approximation of arbitrary smooth strict subsolutions with strong subsolutions.

\begin{proposition}\label{p:stristro}
Let $({v},{p},{R})$ be a smooth strict subsolution on $\T\times [0,T]$. Then there exists $\delta_0>0$ such that, for any $0<\delta<\delta_0$ and any $\sigma>0$ there exists a smooth strong subsolution 
$(\bar v,\bar p,\bar R)$ such that, for all $t\in [0,T]$
\begin{equation}\label{E:H-1stristro}
\|v-\bar{v}\|_{H^{-1}}<\sigma,
\end{equation}
\begin{equation}\label{e:H-1stristo2}
	\|v\otimes v+R-\bar{v}\otimes\bar{v}-\bar{R}\|_{H^{-1}}<\sigma,
\end{equation}
\begin{equation}\label{E:stristro1}
\tfrac{3}{4}\delta\leq \tfrac{1}{3}\tr\bar{R}(t)\leq \tfrac{5}{4}\delta,
\end{equation}
\begin{equation}\label{E:stristro2}
\int_{\T}(v\otimes v+R)\,dx=\int_{\T}(\bar{v}\otimes\bar{v}+\bar{R})\,dx
\end{equation}
and for all $(x,t)\in\T\times[0,T]$
\begin{equation}\label{E:stristro3}
\left|\mathring{\bar R}(x,t)\right|\leq \frac{r_3}{3}\tr \bar R(t).
\end{equation}
\end{proposition}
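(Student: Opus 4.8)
The plan is to realize $(\bar v,\bar p,\bar R)$ as a small-scale oscillatory modification of the given strict subsolution $(v,p,R)$, obtained by a single corrector step combined with a careful choice of the trace part. Since $R>0$ everywhere on the compact set $\T\times[0,T]$, there is a uniform lower bound $R(x,t)\geq 2c\,\Id$ for some $c>0$; choose $\delta_0$ small enough (e.g. $\delta_0<c$). First I would regularize: mollify $(v,p,R)$ at a small spatial scale $\ell$ to get smooth $(v_\ell,p_\ell,R_\ell)$, which are still a strict subsolution up to an error $\div(R_\ell-R_{\mathrm{moll}})$ that is itself positive semidefinite plus a commutator term of size $O(\ell)$; absorb the commutator into the stress. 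This is standard and only costs an arbitrarily small $H^{-1}$ error, taking care of \eqref{E:H-1stristro}–\eqref{e:H-1stristo2} at the mollification level.

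Next, the heart of the matter: I want to add a fast oscillation $w_\mu$ to $v_\ell$, with frequency $\mu\to\infty$, so that $\langle w_\mu\otimes w_\mu\rangle$ cancels the ``bad'' part of $R_\ell$ and replaces it by $\rho(t)\Id$ with $\tfrac34\delta\le\rho(t)\le\tfrac54\delta$, while $w_\mu\overset{*}{\rightharpoonup}0$ and $v_\ell\otimes w_\mu + w_\mu\otimes v_\ell\overset{*}{\rightharpoonup}0$. Concretely, set $\bar v=v_\ell + w_\mu$ and prescribe the target stress
$$
R_{\mathrm{target}}(x,t)=\rho(t)\Id + \mathring{\bar R}(x,t),
$$
where $\mathring{\bar R}$ is chosen smooth, traceless, with $|\mathring{\bar R}|\le \tfrac{r_3}{3}\cdot 3\rho(t)=r_3\rho(t)$ — for instance simply $\mathring{\bar R}\equiv 0$, or a small divergence-correcting term; the freedom here is used to satisfy \eqref{E:stristro2} and \eqref{E:stristro3} simultaneously. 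The corrector is built from Mikado flows via Lemma~\ref{l:Mikado}: on the compact neighbourhood $\mathcal N=\{R: c\,\Id\le R\le C\,\Id\}$ take $W(R,\xi)$ and set
$$
w_\mu(x,t)=W\bigl(R_\ell(x,t)-R_{\mathrm{target}}(x,t)+M\Id,\ \mu x\bigr)
$$
after shifting by a large constant $M\Id$ so that the argument lands in $\mathcal N$; because $W$ solves the pressureless stationary Euler equations \eqref{e:Mikado} in the fast variable, $\div(w_\mu\otimes w_\mu)$ produces only lower-order-in-$\mu$ terms (the $x$-dependence of the amplitude), and $\langle w_\mu\otimes w_\mu\rangle$ equals the prescribed matrix up to the constant shift, which is then reabsorbed into the definition of $\bar R$. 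One then defines $\bar R$ and $\bar p$ by applying the operator $\mathcal R$ from Definition~\ref{d:DefOpR} and Lemma~\ref{L:stat} to the resulting error $\div(v_\ell\otimes w_\mu+w_\mu\otimes v_\ell) + [\text{amplitude derivatives of }w_\mu\otimes w_\mu]$, so that \eqref{E:er1} holds exactly; the $H^{-1}$ smallness \eqref{e:H-1stristo2} follows because all these terms are $O(\mu^{-1})$ in $H^{-1}$ by stationary phase, and \eqref{E:H-1stristro} because $w_\mu$ is $O(\mu^{-1})$ in $H^{-1}$ while $O(1)$ in $L^\infty$. The mean-value constraint \eqref{E:stristro2} is arranged by noting $\int_\T \bar v\otimes\bar v\,dx = \int_\T v_\ell\otimes v_\ell\,dx + \int_\T\langle w_\mu\otimes w_\mu\rangle\,dx + o(1)$ and tuning the constant/traceless freedom in $R_{\mathrm{target}}$ (and a final exact correction absorbed into $\bar R$, which does not affect \eqref{E:stristro1} or \eqref{E:stristro3} since it is $x$-independent and $o(1)$).

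The main obstacle I anticipate is the positivity bookkeeping: I must guarantee that $R_\ell(x,t)-R_{\mathrm{target}}(x,t)+M\Id$ genuinely lies in a fixed compact subset of $\SS^{3\times3}_+$ for all $(x,t)$, uniformly as $\delta,\sigma\to 0$, so that Lemma~\ref{l:Mikado} applies with constants independent of the small parameters — this forces the order of quantifiers (choose $\delta_0$, hence $\mathcal N$, before choosing $\mu$ large depending on $\ell,\delta,\sigma$). A secondary technical point is that the amplitude $R_\ell$ is only as regular as the mollification scale allows, so the stationary-phase estimates \eqref{E:statiii} must be applied with $\mu$ chosen large relative to $\|R_\ell\|_{C^N}\sim \ell^{-N}$; since we only need qualitative $H^{-1}$ smallness and no Hölder bound on $\bar v$ here, this is comfortable — one first fixes $\ell$ (small, controlling the commutator and $\mathcal N$), then sends $\mu\to\infty$. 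Everything else — the exact solvability of \eqref{E:er1} via $\mathcal R$, the bound \eqref{e:Rvsrho}-type inequalities, and \eqref{E:stristro3} with $\mathring{\bar R}\equiv 0$ — is then routine.
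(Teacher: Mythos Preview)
Your outline has the right architecture (one Mikado perturbation, stationary phase, $\mathcal R$ to define the new stress), but it misses the one ingredient that makes the $C^0$ estimate on $\mathring{\bar R}$ close, and without it \eqref{E:stristro3} fails.

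The issue is the cross term $\div(v_\ell\otimes w_\mu+w_\mu\otimes v_\ell)$. With your choice $w_\mu(x,t)=W(S(x,t),\mu x)$ the fast variable is $\mu x$, so $(v_\ell\cdot\nabla)w_\mu$ contains the piece $\mu\,(v_\ell\cdot\nabla_\xi)W$, which is genuinely of size $O(\mu)$. Applying $\mathcal R$ (or equivalently $\mathcal R\div$ to $w_\mu\otimes v_\ell$) gains only one inverse power of the frequency, so the resulting contribution to $\mathring{\bar R}$ is $O(1)$ in $C^0$, not $o(1)$. Since \eqref{E:stristro3} asks for $|\mathring{\bar R}|\le r_3\rho\sim r_3\delta$ with $\delta$ arbitrarily small, an $O(1)$ error is fatal. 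The $H^{-1}$ bounds you mention are fine, but they are not what controls \eqref{E:stristro3}. The paper cures this by composing with the inverse flow: one sets $\tilde R=D\Phi(R-\delta\Id)D\Phi^T$ and $w_o=D\Phi^{-1}W(\tilde R,\lambda\Phi)$, so that the material derivative $(\partial_t+v\cdot\nabla)w_o$ loses its $O(\lambda)$ part because $(\partial_t+v\cdot\nabla)\Phi=0$. This Lagrangian phase is the missing idea; it is not optional here.

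Two smaller points. First, the shift by $M\Id$ is both unnecessary and harmful: since $R\ge 2c\,\Id$ and $\delta<\delta_0<c$, the matrix $R-\delta\Id$ already ranges in a fixed compact subset of $\SS^{3\times3}_+$, so no shift is needed; and if you did shift, then $\langle w_\mu\otimes w_\mu\rangle$ would overshoot by $M\Id$, forcing $\bar R\approx R_{\text{target}}-M\Id$, which is negative. Second, the mollification step is superfluous because $(v,p,R)$ is already smooth on a compact set; the paper works directly with $(v,p,R)$ and a single frequency parameter $\lambda$, avoiding the two-scale $\ell,\mu$ bookkeeping.
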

 
\bigskip

In our third notion of subsolution, the possible vanishing of the Reynolds tensor at the initial time is allowed at the expense of the blow-up of $C^1$ norms as specific rates, which are consistent with H\"older-continuity at exponent $1/5-\eps$. 

\begin{definition}[Adapted subsolution]\label{D:adapt}
Given $\theta\in (0,1/5)$ we call a triple $(\bar{v},\bar{p},\bar{R})$ a $C^{\theta}$-adapted subsolution on $[0,T]$ if
$$
(\bar{v},\bar{p},\bar{R})\in C^{\infty}(\T\times (0,T])\cap C(\T\times [0,T])
$$
is a strong subsolution with initial data
$$
\bar{v}(\cdot,0)\in C^{\theta}(\T),\, \bar{p}(\cdot,0)\in C^{2\theta}(\T),\,\bar{R}(\cdot,0)\equiv 0,
$$
and, with $\bar{\rho}(t)=\frac{1}{3}\tr\bar R(t)$, we have for all $t>0$ and $x\in\T$
$$
\left|\mathring {\bar R}(x,t)\right|\leq r_2\bar{\rho}(t)
$$
and there exists a constant $M>1$ and $\eps>0$ with $\frac{1}{5+2\eps}>\theta$ such that for all $t>0$ we have $\bar{\rho}(t)>0$ and
\begin{equation}\label{e:adapted-est}
\begin{split}
[\bar{v}(t)]_1&\leq M\bar{\rho}(t)^{-(2+\eps)}\,,\quad [\bar{p}(t)]_1\leq M\bar{\rho}(t)^{-(3/2+\eps)}\,,\\
[\bar{R}(t)]_1&\leq M\bar{\rho}(t)^{-(3/2+\eps)}\,,\quad\|(\partial_t+\bar{v}\cdot\nabla)\bar{R}(t)\|_0\leq M\bar{\rho}(t)^{-(1+\eps)}\,.
\end{split}
\end{equation}
\end{definition}

Recall that in this paper we use the notation $[v(t)]_1$ to denote the spatial $C^1$ seminorm of the function $v=v(x,t)$ at time $t$.

\begin{proposition}\label{p:stradapt}
Let $(v_0,p_0,R_0)\in C^{\infty}(\T\times[0,T])$ be a smooth strong subsolution such that 
$$
\tfrac{3}{4}\delta\leq \tfrac{1}{3}\tr\,R_0(t)\leq\tfrac{5}{4}\delta\quad\textrm{ for all }t\in [0,T] 
$$
for some $\delta>0$ and, for all $(x,t)\in\T\times [0,T]$
\[
\left|\mR_0(x,t)\right|\leq \frac{r_3}{3}\tr\,R_0(t).
\] 
Then, for any $\theta<1/5$ and $\sigma>0$ there exists a $C^{\theta}$-adapted subsolution
$(\bar{v},\bar{p},\bar{R})$ such that, for all $t\in[0,T]$
$$
\int_{\T}\bar{v}\otimes \bar{v}+\bar{R}\,dx=\int_{\T}v_0\otimes v_0+R_0\,dx
$$
and
$$
\|\bar{v}-v_0\|_{H^{-1}}<\sigma,
$$
\begin{equation*}
	\|\bar{v}\otimes \bar{v}+\bar{R}-v_0\otimes v_0-R_0\|_{H^{-1}}<\sigma.
\end{equation*}
\end{proposition}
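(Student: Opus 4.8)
The plan is to keep $(\bar v,\bar p,\bar R)=(v_0,p_0,R_0)$ on $\T\times[\tau,T]$ for a small $\tau>0$ and, on $\T\times(0,\tau]$, to run the convex integration iteration ``in the time direction'' using the Mikado flows of Lemma~\ref{l:Mikado}. Concretely, one produces smooth strong subsolutions $(v_q,p_q,R_q)$, $q\ge 0$, starting from $(v_0,p_0,R_0)$, together with a decreasing sequence $\tau_q\downarrow 0$ with $\tau_1=\tau$, such that $(v_q,p_q,R_q)$ coincides with $(v_{q-1},p_{q-1},R_{q-1})$ on $\T\times[\tau_q,T]$, while on $\T\times[0,\tau_q)$ one adds a fast perturbation $w_q$ which absorbs (most of) the stress $R_{q-1}$ and keeps the generalized energy tensor $\int_\T v_q\otimes v_q+R_q\,dx$ unchanged. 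The sought $C^\theta$-adapted subsolution is the limit $(\bar v,\bar p,\bar R)=\lim_q(v_q,p_q,R_q)$: on each slab $[\tau_{q+1},\tau_q]$ only finitely many $w_j$ are active, so the limit is smooth on $\T\times(0,T]$; the $w_j$ are summable in $C^0$, so it is continuous up to $t=0$; and since the stress on the $q$-th slab has size $\sim\delta_q\to 0$, one has $\bar R(\cdot,0)\equiv 0$. The bounds \eqref{e:adapted-est} are then read off from the iteration estimates via the identification $t\sim\tau_q\ \leftrightarrow\ \bar\rho(t):=\tfrac13\tr\bar R(t)\sim\delta_q$.

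The inductive step $(v_{q-1},R_{q-1})\rightsquigarrow(v_q,R_q)$ is the Mikado-flow analogue of the basic perturbation step (Proposition~\ref{p:basic}), additionally localized in time: mollify $(v_{q-1},R_{q-1})$ at a scale $\ell_q$ and set
\[
w_q=\eta_q(t)\,\bar\rho_q(t)^{1/2}\,(\nabla\Phi_q)^{-1}\,W\!\bigl(R_{q-1,\ell_q}/\bar\rho_q,\ \lambda_q\Phi_q\bigr),
\]
where $\lambda_q\gg\ell_q^{-1}$ is the oscillation frequency, $\Phi_q$ is the Lagrangian flow of the mollified velocity (so $(\partial_t+v_{q-1,\ell_q}\cdot\nabla)\Phi_q=0$, and, the flow being volume preserving, the Piola identity makes $w_q$ exactly divergence free), $\bar\rho_q$ is the $t$-dependent trace parameter, and $\eta_q$ is a temporal cutoff, $\eta_q\equiv 1$ on $[0,\tau_{q+1}]$, $\supp\eta_q\subset[0,\tau_q)$. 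Since $\langle W\otimes W\rangle$ equals the normalized mollified stress exactly, the leading part of $\div(w_q\otimes w_q)$ cancels $\div R_{q-1}$; together with $\div_\xi W=0$ and the transport cancellation of the phase $\lambda_q\Phi_q$, the remaining errors -- mollification, transport (Nash), quadratic, and the localization error $\sim\dot\eta_q\,w_q$ -- define a new stress $R_q$, supported in $t<\tau_q$, with $\tr R_q$ a function of $t$ and $\|R_q\|_0,|\mathring R_q|\lesssim\delta_{q+1}\ll\delta_q\sim\tr R_q$ (and corresponding first-derivative bounds), obtained by passing the errors through $\mathcal R$ and using Lemma~\ref{L:stat}. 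A small divergence-free adjustment of the energy makes $\int_\T v_q\otimes v_q+R_q=\int_\T v_{q-1}\otimes v_{q-1}+R_{q-1}$ hold exactly. Controlling the localization error forces $(\tau_q-\tau_{q+1})^{-1}\lesssim\delta_{q+1}\lambda_q\delta_q^{-1/2}$, compatible with $\tau_q\downarrow 0$ for the parameters below; and since $\langle w_q\otimes w_q\rangle$ is the slow part of $w_q\otimes w_q$ and cancels the change in $\bar R$, on each slab $\bar v\otimes\bar v+\bar R-v_0\otimes v_0-R_0$ is a sum of purely oscillatory (frequency $\gtrsim\lambda_1$) and error terms.

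For the parameters: pick $\beta$ with $\theta<\beta<\tfrac15$ and $\beta\ge\tfrac1{5+2\eps}$ (possible since $\tfrac1{5+2\eps}>\theta$), a geometrically decaying $\delta_q$ with $\delta_1\sim\delta$ and $\delta_{q+1}/\delta_q$ a suitably small constant, and $\lambda_q\sim\Lambda\,\delta_q^{-1/(2\beta)}$ with a large multiplier $\Lambda\ge 1$ (this closes the iteration exactly as in the $\tfrac15$-scheme of \cite{Buckmaster:2014ty}, rescaling all frequencies and mollification scales by $\Lambda$ being harmless). Then on the $q$-th slab $\bar\rho(t)\sim\delta_q$, $[\bar v(t)]_1\sim\delta_q^{1/2}\lambda_q$, $[\bar p(t)]_1,[\bar R(t)]_1\lesssim\delta_q\lambda_q$, $\|(\partial_t+\bar v\cdot\nabla)\bar R\|_0\lesssim\delta_q^{3/2}\lambda_q$, and since $\lambda_q\sim\Lambda\delta_q^{-1/(2\beta)}$ each of these becomes the corresponding estimate in \eqref{e:adapted-est} with a finite constant $M$ proportional to $\Lambda$, under the single inequality $\beta(5+2\eps)\ge 1$. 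Summing over slabs gives $\|\bar v-v_0\|_{H^{-1}},\ \|\bar v\otimes\bar v+\bar R-v_0\otimes v_0-R_0\|_{H^{-1}}\lesssim\sum_q\delta_q^{1/2}\lambda_q^{-1}\lesssim\Lambda^{-1}\sum_q\delta_q^{1/2+1/(2\beta)}<\sigma$ once $\Lambda$ is large; the energy identity holds by construction; $\bar v(\cdot,0)=v_0(\cdot,0)+\sum_q w_q(\cdot,0)\in C^\theta$ because $\sum_q\delta_q^{1/2}\lambda_q^{\theta}<\infty$ (here $\beta>\theta$ is used), and $\bar p(\cdot,0)\in C^{2\theta}$ by the analogous estimate with exponent $2\theta<2\beta$; and $|\mathring{\bar R}(x,t)|\lesssim\delta_{q+1}\ll\delta_q\sim\bar\rho(t)$, in particular $\le r_2\bar\rho(t)$. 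Hence $(\bar v,\bar p,\bar R)$ is a $C^\theta$-adapted subsolution in the sense of Definition~\ref{D:adapt}.

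The main obstacle is the inductive step: it amounts to re-running the convex integration scheme of \cite{Buckmaster:2014ty} with Mikado rather than Beltrami flows (so no ``$R/\rho$ near $\Id$'' restriction is needed), while additionally (i) tracking the generalized energy tensor so that it is conserved exactly at every step, and (ii) localizing each perturbation to a slab $[\tau_{q+1},\tau_q)$, which produces the extra error $\sim\dot\eta_q\,w_q$ and couples the slab widths to $(\lambda_q,\delta_q)$. Making all of this close while the resulting rates of $[\bar v]_1,[\bar p]_1,[\bar R]_1$ and $\|(\partial_t+\bar v\cdot\nabla)\bar R\|_0$ match the exponents $-(2+\eps),-(3/2+\eps),-(3/2+\eps),-(1+\eps)$ in \eqref{e:adapted-est} is where essentially all the work lies, and it succeeds precisely because $\theta<\tfrac1{5+2\eps}<\tfrac15$. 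A secondary point, needed for the statement, is that the $H^{-1}$-smallness is bought by taking the overall frequency multiplier $\Lambda$ large, which only enlarges the constant $M$ in \eqref{e:adapted-est} -- and $M$ is allowed to be arbitrary.
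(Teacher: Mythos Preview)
Your overall strategy—iterating a time-localized perturbation step on shrinking intervals $[0,\tau_q)$ so that the limit is smooth on $(0,T]$, continuous up to $t=0$, with $\bar R(\cdot,0)=0$, and then reading off the adapted estimates \eqref{e:adapted-est} from the iteration bounds—matches the paper's proof exactly. The paper uses dyadic cutoffs $\chi_q(t)=\chi(2^qt)$ in place of your $\eta_q$, sets $S_q=\chi_q(R_q-\delta_{q+2}\Id)$, and applies the perturbation step once per $q$; the conservation of the generalized energy tensor and the $H^{-1}$ smallness are obtained in the same way you describe.

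The substantive difference is the perturbation step itself. The paper does \emph{not} use Mikado flows here: it invokes Proposition~\ref{p:basic}, which is the Beltrami-based construction of \cite{Buckmaster:2014ty} (with gluing in time via multiple flow maps $\Phi_l$), together with the double-exponential parameters $\delta_q=\delta a^{b-b^q}$, $\lambda_q\sim a^{cb^{q+1}}$ tailored to its error bound $\delta_{q+1}^{3/4}\delta_q^{1/4}\lambda_q^{1/2}\lambda_{q+1}^{\tilde\eps-1/2}$. In this paper Mikado flows appear only once, in Proposition~\ref{p:stristro}, for a single non-iterated reduction. Your proposal to run the full iteration with Mikado flows, a single flow map $\Phi_q$ per step, geometric $\delta_q$, and $\lambda_q\sim\Lambda\delta_q^{-1/(2\beta)}$ is a genuinely different (and more modern) route; but it requires proving a Mikado analogue of Proposition~\ref{p:basic} with the right error structure, which you only sketch. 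Two technical points to watch: (i) the $w_q$ you wrote is not exactly divergence free by the Piola identity alone, since the amplitude depends on $x$ through $R_{q-1,\ell_q}$—a small corrector is needed, as in the proof of Proposition~\ref{p:stristro}; (ii) with a single $\Phi_q$ active on all of $[0,\tau_q)$ you must check that $D\Phi_q$ stays uniformly controlled, which couples $\tau_q$ to $[v_{q-1}]_1$ and is precisely what the time-gluing in Proposition~\ref{p:basic} is designed to decouple. The paper's route has the advantage that Proposition~\ref{p:basic} is inherited from \cite{Buckmaster:2014ty}, so Section~\ref{s:stroadapt} reduces to bookkeeping; your route would buy independence from the Beltrami constraint (and hence from the hypothesis $|\mathring R_0|\le r_3\rho_0$), at the cost of re-proving the main perturbation step from scratch.
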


As an immediate consequence of Propositions \ref{p:stristro} and \ref{p:stradapt} we obtain the following statement:
\begin{corollary}\label{c:striadapt}
  Let $(\tilde{v},\tilde{p},\tilde{R})$ be a smooth strict subsolution on $\T\times [0,T]$. Then, for any $\theta<1/5$ and $\sigma>0$ there exists a $C^{\theta}$-adapted subsolution
$(\bar{v},\bar{p},\bar{R})$ such that, for all $t\in[0,T]$
$$
\int_{\T}\bar{v}\otimes \bar{v}+\bar{R}\,dx=\int_{\T}\tilde v\otimes \tilde v+\tilde R\,dx
$$
and
$$
\|\bar{v}-\tilde v\|_{H^{-1}}< \sigma,
$$
\begin{equation*}
	\|\bar{v}\otimes \bar{v}+\bar{R}-\tilde v\otimes \tilde v-\tilde R\|_{H^{-1}}<\sigma.
\end{equation*}
\end{corollary}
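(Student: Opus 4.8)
The plan is simply to compose Propositions \ref{p:stristro} and \ref{p:stradapt}, the statement being a bookkeeping corollary of the two. Starting from the smooth strict subsolution $(\tilde v,\tilde p,\tilde R)$, I would first invoke Proposition \ref{p:stristro}. This produces a threshold $\delta_0>0$; I fix any $0<\delta<\delta_0$ (the precise value is irrelevant, which is why $\delta$ does not appear in the corollary) and apply the proposition with tolerance $\sigma/2$ in place of $\sigma$. This yields a smooth strong subsolution $(v_0,p_0,R_0)$ satisfying, for all $t\in[0,T]$, the bounds $\tfrac34\delta\le\tfrac13\tr R_0(t)\le\tfrac54\delta$ and $|\mathring R_0(x,t)|\le\tfrac{r_3}{3}\tr R_0(t)$, the exact identity $\int_\T (v_0\otimes v_0+R_0)\,dx=\int_\T(\tilde v\otimes\tilde v+\tilde R)\,dx$, and the closeness estimates $\|\tilde v-v_0\|_{H^{-1}}<\sigma/2$, $\|\tilde v\otimes\tilde v+\tilde R-v_0\otimes v_0-R_0\|_{H^{-1}}<\sigma/2$.

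Next I would observe that $(v_0,p_0,R_0)$ is exactly an admissible input for Proposition \ref{p:stradapt}: it is a smooth strong subsolution with $\tfrac34\delta\le\tfrac13\tr R_0(t)\le\tfrac54\delta$ and $|\mathring R_0(x,t)|\le\tfrac{r_3}{3}\tr R_0(t)$. So, given $\theta<1/5$, I apply Proposition \ref{p:stradapt} to $(v_0,p_0,R_0)$ with tolerance $\sigma/2$, obtaining a $C^{\theta}$-adapted subsolution $(\bar v,\bar p,\bar R)$ with $\int_\T(\bar v\otimes\bar v+\bar R)\,dx=\int_\T(v_0\otimes v_0+R_0)\,dx$ for all $t$, together with $\|\bar v-v_0\|_{H^{-1}}<\sigma/2$ and $\|\bar v\otimes\bar v+\bar R-v_0\otimes v_0-R_0\|_{H^{-1}}<\sigma/2$.

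Finally I would combine the two. The two exact energy-tensor identities chain to give $\int_\T(\bar v\otimes\bar v+\bar R)\,dx=\int_\T(\tilde v\otimes\tilde v+\tilde R)\,dx$ for all $t\in[0,T]$. The triangle inequality for $\|\cdot\|_{H^{-1}}$ then gives, for every $t$,
$$
\|\bar v-\tilde v\|_{H^{-1}}\le\|\bar v-v_0\|_{H^{-1}}+\|v_0-\tilde v\|_{H^{-1}}<\sigma,
$$
and likewise $\|\bar v\otimes\bar v+\bar R-\tilde v\otimes\tilde v-\tilde R\|_{H^{-1}}<\sigma$. Since $(\bar v,\bar p,\bar R)$ is a $C^{\theta}$-adapted subsolution by construction, all assertions follow.

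There is no genuine analytic obstacle here: the entire content sits in Propositions \ref{p:stristro} and \ref{p:stradapt}, and the corollary is their composition. The only points requiring care are the elementary ones, namely splitting the tolerance as $\sigma/2+\sigma/2$, using transitivity of the exact energy-tensor identity, and using the triangle inequality in $H^{-1}(\T)$; plus the (immediate) verification that the output of Proposition \ref{p:stristro} meets the hypotheses of Proposition \ref{p:stradapt}, which holds verbatim by the way those statements are phrased.
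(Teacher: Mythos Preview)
Your proposal is correct and matches the paper's approach: the paper states the corollary as ``an immediate consequence of Propositions \ref{p:stristro} and \ref{p:stradapt}'' without writing out the details, and your composition argument with the $\sigma/2$ splitting and triangle inequality is exactly what is meant.
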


Our final result in this section shows that $C^\theta$-adapted subsolutions can be used for the initial value problem for $1/5$-H\"older weak solutions:
\begin{proposition}\label{P:adaptsol}
Let $(\bar{v},\bar{p},\bar{R})$ be a $C^\theta$-adapted subsolution with $\theta<1\slash5$. 
Then, for any $\sigma>0$ there exists a continuous weak solution $(v,p)$ of \eqref{E:e} with initial data
\[
v(\cdot,0)=\bar{v}(\cdot,0),
\]
such that for all $t\in [0,T]$
\begin{equation}\label{e:Ctheta-est}
%\begin{split}
	|v(x,t)-v(y,t)|\leq C|x-y|^{\theta}\quad\textrm{ for all $x,y\in\T$}
%	|p(x,t)-p(y,t)|&\leq C|x-y|^{2\theta}\quad\textrm{ for all $x,y\in\T$}
%\end{split}
\end{equation}
for some constant $C>0$, 
\begin{equation}
 \int_{\T} v\otimes v\,dx=\int_{\T} \bar{v}\otimes \bar{v}+\bar{R}\,dx
\end{equation}
and
\begin{equation}\label{E:h-12}
\|\bar v-v\|_{H^{-1}}< \sigma,
\end{equation}
\begin{equation*}
	\|\bar{v}\otimes \bar{v}+\bar{R}-v\otimes v\|_{H^{-1}}<\sigma.
\end{equation*}
\end{proposition}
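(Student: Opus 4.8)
The plan is to construct the solution $v$ as the $C^0$-limit of a convergent sequence $(v_q,p_q,R_q)$ of smooth \emph{approximate solutions} (Euler-Reynolds flows) obtained by a Nash--Kuiper-type convex integration scheme, starting from the adapted subsolution $(\bar v,\bar p,\bar R)$ and driving the Reynolds error $R_q$ to zero. Because $\bar R(\cdot,0)\equiv0$, the scheme must be run so that the perturbation added at step $q$ vanishes at $t=0$ (or is supported in $t\geq t_q$ for a suitable sequence $t_q\downarrow 0$), so that $v(\cdot,0)=\bar v(\cdot,0)$ is preserved exactly; the blow-up rates \eqref{e:adapted-est} built into the definition of adapted subsolution are exactly what is needed to make the iteration close uniformly down to $t=0$ while keeping the limit in $C^\theta$. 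The key new ingredient is that the oscillatory perturbation at each step uses the Mikado flows $W(R,\xi)$ from Lemma \ref{l:Mikado} instead of Beltrami flows: since Mikado flows realize \emph{every} $R\in\mathcal N\subset\subset\SS^{3\times3}_+$ via \eqref{e:MikadoWW}, the condition $|\mathring{\bar R}|\leq r_2\bar\rho$ (which guarantees $R_q/\rho_q$ stays in a fixed compact subset of $\SS^{3\times3}_+$) is all that is required, with no need to keep $R_q$ near the identity.

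The steps, in order: (1) Fix parameters $\lambda_q$ (frequencies, growing super-exponentially) and $\delta_q$ (amplitudes, $\delta_q\sim\lambda_q^{-2\theta}$) and set up the inductive estimates $\|v_q-v_{q-1}\|_0\lesssim\delta_q^{1/2}$, $\|v_q\|_1\lesssim\delta_q^{1/2}\lambda_q$, $\|R_q\|_0\lesssim\delta_{q+1}$, together with their time-localized refinements involving $\rho(t)$ near $t=0$. (2) The inductive step: mollify $(v_q,R_q)$ in space (and slightly in time) at scale $\ell_q$; decompose the mollified Reynolds tensor as $\rho_q(t)\Id+\mathring R_q$ and, using $R_q/\rho_q$ lying in a fixed compact subset of $\SS^{3\times3}_+$, write the principal perturbation as $w_{q+1}=\sum$ of terms $a_k(x,t)W_k$ built from $W\bigl(\rho_q(\Id - \mathring R_q/\rho_q),\lambda_{q+1}\Phi(x,t)\bigr)$ along the flow $\Phi$ of the mollified velocity, so that $\langle w_{q+1}\otimes w_{q+1}\rangle$ cancels $R_q$ to leading order; add a corrector $w_{q+1}^{(c)}$ to restore $\div v_{q+1}=0$ exactly. (3) Define $v_{q+1}=v_q+w_{q+1}+w_{q+1}^{(c)}$ and compute the new Reynolds stress $R_{q+1}$ by applying the operator $\mathcal R=\div^{-1}$ of Definition \ref{d:DefOpR} to the various error terms (transport error, Nash error, oscillation error, corrector error), estimating each via the stationary phase Lemma \ref{L:stat}; the crucial point is that because distinct Mikado tubes are disjoint (\eqref{e:disjoint}), the ``low-frequency'' self-interaction that plagued Beltrami constructions is absent, which is what allows $R_{q+1}$ to be made as small as $\delta_{q+2}$ and thereby reach exponent $1/5-\eps$. (4) Arrange all perturbations to vanish for $t$ near $0$ at a rate matching \eqref{e:adapted-est}, so the sum telescopes to a solution with the right initial data; (5) check the $H^{-1}$ smallness (choose $\lambda_1$ large — high-frequency perturbations are $H^{-1}$-small), the exact energy identity $\int v\otimes v\,dx=\int\bar v\otimes\bar v+\bar R\,dx$ (built in by the normalization $\langle w\otimes w\rangle=R$ and $\langle w\rangle=0$ at each step, plus care with the corrector's contribution), and pass to the $C^\theta$ limit by summing the geometric-type bounds.

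The main obstacle I expect is step (3)--(4): controlling the new Reynolds stress \emph{uniformly down to $t=0$}. Away from $t=0$ this is essentially the now-standard estimate (transport + Nash + oscillation errors, handled by Lemma \ref{L:stat}), but near $t=0$ the $C^1$ norms of $\bar v,\bar p,\bar R$ blow up like powers of $\bar\rho(t)^{-1}$, and one must verify that the exponents $-(2+\eps)$, $-(3/2+\eps)$, $-(1+\eps)$ in \eqref{e:adapted-est} are precisely balanced against the gain $\delta_{q+1}$ from each oscillation so that the iteration still converges and the limiting modulus of continuity is $C^\theta$ with $\theta<\frac{1}{5+2\eps}$. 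Concretely, the transport error $\mathcal R\bigl((\partial_t+v\cdot\nabla)(a_k W_k)\bigr)$ and the term involving $(\partial_t+\bar v\cdot\nabla)\bar R$ must be shown to be $\lesssim\delta_{q+2}$ even where $\bar\rho(t)$ is small, which forces a careful choice of the time cutoff scale and of how $\lambda_{q+1},\delta_{q+1}$ depend on $t$ (or on the ``index'' at which the perturbation is first switched on). Getting this bookkeeping exactly right — so that one genuinely recovers $\bar v(\cdot,0)$ as the initial datum rather than merely an $H^{-1}$-approximation of it — is the technical heart of the proposition, and is where the specific numerology of Definition \ref{D:adapt} is used in full.
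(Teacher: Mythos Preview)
Your high-level iteration plan (construct $(v_q,p_q,R_q)$ with $R_q\to 0$, perturbations supported away from $t=0$, sum to a $C^\theta$ limit) matches the paper's architecture. But your proposal contains a substantive misconception about the building block.

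\textbf{The paper does not use Mikado flows for this proposition.} The inductive step is Proposition~\ref{p:basic}, which is explicitly the Beltrami-flow construction imported from \cite{Buckmaster:2014ty} (see the $B_k$, $\Lambda^e$, $\Lambda^o$ in its proof). Mikado flows appear in this paper \emph{only} in Proposition~\ref{p:stristro}, where the purpose is to absorb an \emph{arbitrary} positive-definite $R$ in a single (non-iterated) step --- that is what enables the h-principle Theorem~\ref{t:hprinciple}. By the time one reaches Proposition~\ref{P:adaptsol}, the adapted subsolution already satisfies $|\mathring{\bar R}|\leq r_2\bar\rho$ with $r_2<r_0$, so $R_q/\rho_q$ stays in the small ball $B_{r_0}(\Id)$ where the Beltrami decomposition of Lemma~\ref{l:split} applies; Mikado's extra flexibility is not needed here.

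Consequently your sentence ``because distinct Mikado tubes are disjoint \ldots\ the low-frequency self-interaction that plagued Beltrami constructions is absent, which is what allows $R_{q+1}\lesssim\delta_{q+2}$ and thereby reach exponent $1/5-\eps$'' is historically and technically wrong: the $1/5-\eps$ threshold was already attained with Beltrami flows in \cite{Buckmaster:2014ty}, and that is precisely the black box the paper invokes. Mikado flows in this paper buy generality in $R$, not a better H\"older exponent.

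On the time-localization near $t=0$: rather than making $\lambda_{q+1},\delta_{q+1}$ depend on $t$, the paper fixes a global $b$-admissible sequence $(\delta_q,\lambda_q)$ and localizes via cutoffs $\chi_q$ supported on $J_q=\{t:\rho_q(t)>\tfrac32\delta_{q+2}\}$, setting $S_q=\chi_q(R_q-\delta_{q+2}\Id)$ and applying Proposition~\ref{p:basic} with this $S_q$. The exponents in \eqref{e:adapted-est} are used once, at the outset, to check that whenever $\bar\rho(t)\geq\tfrac32\delta_{q+2}$ the $C^1$ norms of $(\bar v,\bar p,\bar R)$ already fit the hypotheses \eqref{e:basic:assumption2}--\eqref{e:basic:assumption4} at level $q$ (this is the computation \eqref{e:solution-adapted-stepq-2}); after that the iteration is uniform. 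The sets $J_q$ turn out to be nested with $\bigcup_q J_q=(0,T]$ and never contain $t=0$, which is exactly why $v(\cdot,0)=\bar v(\cdot,0)$. Your proposal gestures at this mechanism but does not identify it; the ``bookkeeping'' you anticipate is resolved by this cutoff-on-level-sets device, not by time-dependent frequencies.

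Finally, the exact tensor identity $\int v_q\otimes v_q+R_q\,dx=\int \bar v\otimes\bar v+\bar R\,dx$ is not automatic from $\langle W\otimes W\rangle=R$; there is a residual mean $\mathcal E^{(2)}(t)=\fint v_1\otimes v_1-v_0\otimes v_0-S\,dx$ which Proposition~\ref{p:basic} explicitly tracks and folds into $R_{q+1}$.
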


As an immediate consequence of Proposition \ref{P:adaptsol} we obtain the following criterion for wild initial data:

\begin{corollary}\label{c:wild}
	Let $w\in C^{\theta}(\T)$ be a divergence-free vectorfield for some $\theta<1/5$. If there exists a $C^\theta$-adapted subsolution
	$(\bar{v},\bar{p},\bar{R})$ such that $\bar{v}(\cdot,0)=w(\cdot)$ and 
	$$
	  \int_{\T}|\bar v(x,t)|^2+\tr\bar{R}(x,t)\,dx\leq \int_{\T}|w(x)|^2\,dx\quad\textrm{ for all }t>0,
	$$
	then $w$ is a wild initial datum in $C^\theta$. 
\end{corollary}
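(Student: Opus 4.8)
The plan is to deduce the statement directly from Proposition \ref{P:adaptsol}. Working on the time interval $[0,1]$ (which we take to be the interval on which the $C^\theta$-adapted subsolution is defined, matching the definition of a wild initial datum), for each $k\in\N$ I would apply Proposition \ref{P:adaptsol} to the given adapted subsolution $(\bar v,\bar p,\bar R)$ with parameter $\sigma=1/k$. This produces a continuous weak solution $(v_k,p_k)$ of \eqref{E:e} with $v_k(\cdot,0)=\bar v(\cdot,0)=w$, satisfying the H\"older bound \eqref{e:Holder} (with some constant $C_k$), the identity $\int_{\T}v_k\otimes v_k\,dx=\int_{\T}\bar v\otimes\bar v+\bar R\,dx$ for every $t\in[0,1]$, and $\|\bar v(t)-v_k(t)\|_{H^{-1}}<1/k$ for every $t\in[0,1]$.

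Next I would verify that each $v_k$ is admissible. Taking the trace of the identity $\int_{\T}v_k\otimes v_k\,dx=\int_{\T}\bar v\otimes\bar v+\bar R\,dx$ gives $\int_{\T}|v_k(x,t)|^2\,dx=\int_{\T}|\bar v(x,t)|^2+\tr\bar R(x,t)\,dx$ for all $t$, and by the hypothesis of the corollary the right-hand side is $\leq\int_{\T}|w(x)|^2\,dx=\int_{\T}|v_k(x,0)|^2\,dx$. Hence \eqref{e:admissibility} holds for every $v_k$, so each $v_k$ is an admissible H\"older-continuous weak solution of \eqref{E:e} on $\T\times[0,1]$ with initial datum $w$.

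It remains to show that the $v_k$ yield infinitely many distinct such solutions, and this is the only point that is not pure bookkeeping. I would argue by contradiction: if the set of admissible $C^\theta$ weak solutions of \eqref{E:e} with initial datum $w$ were finite, then by pigeonhole some solution $u$ in this set would coincide with $v_k$ for infinitely many $k$, whence $\|\bar v(t)-u(t)\|_{H^{-1}}<1/k$ along that subsequence and so $\|\bar v(t)-u(t)\|_{H^{-1}}=0$ for all $t$. Since the spatial mean of a weak solution of \eqref{E:e}, and that of $\bar v$, are conserved in time and both equal $\int_{\T}w\,dx$, the difference $\bar v(t)-u(t)$ has zero spatial mean, so $\|\bar v(t)-u(t)\|_{H^{-1}}=0$ forces $\bar v(t)=u(t)$ for a.e.\ $x$, for every $t$. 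Plugging $u=\bar v$ into the trace identity for $u$ yields $\int_{\T}\tr\bar R(x,t)\,dx=0$, i.e.\ $\bar\rho(t)=0$ for all $t$, contradicting the requirement $\bar\rho(t)>0$ for $t>0$ in Definition \ref{D:adapt}. Therefore there are infinitely many admissible H\"older-continuous weak solutions with initial datum $w$, i.e.\ $w$ is a wild initial datum in $C^\theta$. The role of the strict positivity $\bar\rho(t)>0$ of the adapted subsolution is precisely to separate the $v_k$ from their $H^{-1}$-limit $\bar v$ in the strong $L^2$ topology at every positive time, which is what powers the distinctness argument.
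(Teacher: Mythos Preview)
Your proposal is correct and follows essentially the same approach as the paper: apply Proposition~\ref{P:adaptsol} with $\sigma=1/k$ to obtain admissible $C^\theta$ weak solutions $v_k$ with $v_k(\cdot,0)=w$ and $v_k\to\bar v$ in $H^{-1}$ uniformly in time. The paper's proof leaves the distinctness of infinitely many $v_k$ implicit (relying only on $v_k\to\bar v$ in $H^{-1}$, with $\bar v$ not itself a solution since $\bar\rho>0$); you have spelled this out carefully via the pigeonhole argument, which is a welcome clarification rather than a different route.
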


\begin{proof}
  Indeed, given a $C^\theta$-adapted subsolution	$(\bar{v},\bar{p},\bar{R})$, Proposition \ref{P:adaptsol} provides a sequence of $C^\theta$ weak solutions $(v_k,p_k)$ with $v_k(\cdot,0)=\bar{v}(\cdot, 0)$,
  $$
	  \int_{\T}|v_k(x,t)|^2\,dx=\int_{\T}|\bar v(x,t)|^2+\tr\bar{R}(x,t)\,dx\quad\textrm{ for all }t>0,
  $$
  and such that $v_k\to \bar{v}$ in $H^{-1}(\T)$ (uniformly in time).
\end{proof}

In the next section we show how these corollaries can be used to prove our main theorems. Then, in Sections \ref{s:strostri}-\ref{s:adaptsol} we give the proof of Propositions \ref{p:stristro}-\ref{P:adaptsol}.

%%%%%%%%%%%%%%%%%%%%%%%%%%%%%%%%%%%%%%
%%%%%%%%%%%%%%%%%%%%%%%%%%%%%%%%%%%%%%
%%%%%%%%%%%%%%%%%%%%%%%%%%%%%%%%%%%%%%

\section{Proof of the main results}

Using Corollaries \ref{c:striadapt} and \ref{c:wild} from Section \ref{s:definitions}, we are in a position to prove the main theorems announced in the introduction.  

\begin{comment}

Let $e:[0,1]\to\R$ be a smooth, positive function and let $\theta<1/5$. Define
$$
\tilde R(t):=\frac{2e(t)}{3(2\pi)^3}\Id.
$$	
Trivially the triple $(0,0,\tilde R)$ is a smooth, strict subsolution on $[0,T]$. Therefore, applying Corollary \ref{c:striadapt}, for any
 $\sigma>0$ there exists a $C^\theta$-adapted subsolution $(\bar{v},\bar{p},\bar{R})$ on $[0,T]$ such that
$$
\|\bar v\|_{H^{-1}}\leq \sigma
$$
and
$$
\int_{\T}\bar{v}\otimes\bar{v}+\bar{R}\,dx=\tfrac23e(t)\Id\quad\textrm{ for all }t\in[0,T].
$$
Furthermore, applying Proposition  \ref{P:adaptsol},   for any $\tilde\sigma>0$ there exists a continuous weak solution $(v,p)$ of the Euler equations \eqref{E:e} such that
$v\in C([0,T];C^{\theta}(\T))$, $p\in C([0,T];C^{2\theta}(\T))$, with initial datum $v(\cdot,0)=\bar{v}(\cdot,0)$ and such that
$$
\|\bar v-v\|_{H^{-1}}\leq \tilde\sigma
$$
and
$$
\int_{\T}v\otimes v\,dx=\int_{\T}\bar{v}\otimes\bar{v}+\bar{R}\,dx=\tfrac23e(t)\Id\quad\textrm{ for all }t\in[0,T].
$$
In particular, by taking the trace we obtain
$$
\tfrac{1}{2}\int_{\T}|v|^2\,dx=e(t)\quad\textrm{ for all }t\in[0,T].
$$
Since $\tilde\sigma>0$ is arbitrary, this shows first of all that $\bar{v}(\cdot,0)$ is a wild initial datum, and secondly, since $\sigma>0$ was also arbitrary, that there exists infinitely many such wild initial data.

\end{comment}

First of all we recall the following Lemma from \cite{SzekelyhidiJr:2014tu} (Lemma 12 on p238). By the remarks following Definition \ref{d:subsolution}, the concept of $L^\infty$-subsolution is substituted by that of strict subsolution.

\begin{lemma}\label{L:subsol1}
 Let $w\in L^2(\T;\R^3)$ with $\div\, w=0$. Then, for any $\eps>0$, there exists a smooth, strict subsolution $(\tilde v, \tilde p, \tilde R)$ on $[0,T]$ such that 
\begin{equation}
 \| \tilde v(0,\cdot)-w\|_{L^2}\leq\eps\label{E:es3}
\end{equation}
and
\begin{equation}\label{E:ew}
\int_{\T} |\tilde v(x,t)|^2+\tr\tilde R(x,t)\,dx\leq \int_{\T}|w(x)|^2\,dx+\eps\quad\textrm{ for all }t\in[0,T].
\end{equation}
\end{lemma}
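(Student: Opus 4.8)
The plan is to obtain $(\tilde v,\tilde p,\tilde R)$ by mollifying an \emph{admissible bounded} weak solution of \eqref{E:e} whose initial datum is $L^2$-close to $w$, and then absorbing the resulting mollification error into a positive definite $\tilde R$. First I would invoke the density of wild bounded initial data \cite{DeLellis:2008vc,SzekelyhidiJr:2012dc} to fix $w_0\in L^\infty(\T;\R^3)$ with $\div\,w_0=0$, $\|w_0-w\|_{L^2}<\eps/3$, together with an admissible weak solution $\hat v\in C([0,T];L^2(\T))\cap L^\infty(\T\times[0,T])$ of \eqref{E:e} with $\hat v(\cdot,0)=w_0$ satisfying $\int_{\T}|\hat v(t)|^2\,dx\le\int_{\T}|w_0|^2\,dx\le\int_{\T}|w|^2\,dx+\eps/3$ for every $t$. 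Extending $\hat v$ by its endpoint values to $t<0$ and $t>T$ keeps $\hat v\in C(\R;L^2)$, so that space--time mollification is well defined up to $t=0,T$.

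Next I would set $\tilde v:=\hat v*\eta_r$ for a standard space--time mollifier $\eta_r$ at scale $r$, so that $\tilde v$ is smooth and divergence free with $\|\tilde v(t)\|_{L^2}\le\|\hat v(t)\|_{L^2}$ and, crucially, $\omega(r):=\sup_t\|\hat v(t)-\tilde v(t)\|_{L^2}\to0$ as $r\to0$ (here one uses that $\{\hat v(t):t\in[0,T]\}$ is compact in $L^2$, hence translations act equicontinuously on it). Mollifying \eqref{E:e} and rearranging gives, for a suitable $\tilde p$, the identity $\partial_t\tilde v+\div(\tilde v\otimes\tilde v)+\nabla\tilde p=\div\mathcal C_r$ with $\mathcal C_r:=\tilde v\otimes\tilde v-(\hat v\otimes\hat v)*\eta_r$ a smooth symmetric matrix field, and the Constantin--E--Titi commutator estimate yields $\sup_t\|\mathcal C_r(t)\|_{L^1(\T)}\lesssim\omega(r)\bigl(\omega(r)+\sup_t\|\hat v(t)\|_{L^2}\bigr)=:\kappa(r)\to0$. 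I would then put $\tilde R:=\rho\,\Id-\mathcal C_r$ with $\rho:=2\sqrt{|\mathcal C_r|^2+\delta^2}$ for a small $\delta>0$, replacing $\tilde p$ by $\tilde p-\rho$ to compensate the extra gradient. Since $\lambda_{\max}(\mathcal C_r)\le|\mathcal C_r|<\rho/2$, the tensor $\tilde R$ is smooth and positive definite, so $(\tilde v,\tilde p,\tilde R)$ is a smooth \emph{strict} subsolution, and $\int_{\T}\tr\tilde R(t)\,dx\le C(\kappa(r)+\delta)$ for all $t$.

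It then remains to check \eqref{E:es3}--\eqref{E:ew}. From $\|\tilde v(t)\|_{L^2}^2\le\|\hat v(t)\|_{L^2}^2\le\int_{\T}|w|^2\,dx+\eps/3$ and the trace bound above, $\int_{\T}|\tilde v(t)|^2+\tr\tilde R(t)\,dx\le\int_{\T}|w|^2\,dx+\eps/3+C(\kappa(r)+\delta)$, while $\|\tilde v(\cdot,0)-w\|_{L^2}\le\omega(r)+\|w_0-w\|_{L^2}<\omega(r)+\eps/3$; choosing $\delta$ and then $r$ small enough that $C(\kappa(r)+\delta)<\eps/3$ and $\omega(r)<\eps/3$ finishes the argument. (Indeed, this lemma is just Lemma 12 of \cite{SzekelyhidiJr:2014tu} transcribed into the language of Definition \ref{d:subsolution} via the remark following it, with the $\rho\,\Id$ term turning $R\ge0$ into the strict inequality.)

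The step I expect to be the main obstacle is making $\omega(r)$ -- and hence $\mathcal C_r$ and $\tr\tilde R$ -- small \emph{uniformly in} $t\in[0,T]$, not merely in time average: this is forced because \eqref{E:ew} must hold at every time, and the budget for $\tr\tilde R$ is tightest near $t=0$, where $\int_{\T}|\tilde v(0)|^2\,dx\approx\int_{\T}|w|^2\,dx$. The strong $L^2$-in-time continuity of the \emph{admissible} weak solution $\hat v$ is exactly what supplies this; it is also the reason one cannot simply take $\tilde v$ to be a generic smooth approximation of $w$, since such a $\tilde v$ carries a Reynolds stress of order $\|\tilde v\|_{L^\infty}^2$ and overshoots the energy budget at $t=0$.
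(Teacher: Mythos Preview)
The paper does not give a proof; it simply records that this is Lemma~12 of \cite{SzekelyhidiJr:2014tu}, once the notion of subsolution there is translated via the remark after Definition~\ref{d:subsolution}. Your proposal reconstructs that argument, and the core mechanism --- mollify an admissible bounded weak solution, bound the commutator $\mathcal C_r$ in $L^1$ uniformly in $t$, then inflate by a small positive multiple of $\Id$ to obtain strict positivity --- is exactly right. Your final paragraph also correctly pinpoints why one must pass through a genuine weak solution rather than a generic smooth approximation of $w$.

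There is, however, one real gap. Extending $\hat v$ by its endpoint values keeps $\hat v\in C(\R;L^2)$, but the extended $\hat v$ does \emph{not} solve \eqref{E:e} for $t<0$: with $\hat v\equiv w_0$ there, the residual $\mathcal P\,\div(w_0\otimes w_0)$ is in general nonzero and of size $\|w_0\|_\infty^2$. Consequently the mollified identity $\partial_t\tilde v+\div(\tilde v\otimes\tilde v)+\nabla\tilde p=\div\mathcal C_r$ is only valid on $[r,T-r]$; near $t=0$ you pick up an extra divergence term of order $\|w_0\|_\infty^2$, which would force $\tr\tilde R$ to be large precisely where the energy budget \eqref{E:ew} is tightest. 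The fix is routine: invoke the same density/existence results to produce $\hat v$ on a slightly larger interval, say $[0,T+1]$, mollify at scale $r\ll 1$, and then shift time by $r$ (so the new initial datum is $(\hat v*\eta_r)(r)$, still $L^2$-close to $w_0$ by the strong $L^2$-continuity of $\hat v$). With this adjustment the rest of your argument goes through unchanged.
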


With the help of this lemma we are now in a position to prove the density of wild initial data.

\begin{proof}[Proof of Theorem \ref{T:dense}]
Let $\delta>0$, $\theta<1/5$ and let $w\in L^2(\T;\R^3)$ with $\div\, w=0$. 
Using Lemma \ref{L:subsol1} we obtain a smooth, strict subsolution  $(\tilde v, \tilde p, \tilde R)$ on $[0,T]$ such that 
\eqref{E:es3}-\eqref{E:ew} hold for some $\eps>0$ to be fixed later. By adding a time-dependent non-negative multiple of the identity matrix to $\tilde R$ if necessary (which retains the property of being a strict and smooth subsolution), we may assume without loss of generality that 
in fact \eqref{E:ew} is an equality:
\begin{equation}\label{E:ew2}
\int_{\T} |\tilde v(x,t)|^2+\tr\tilde R(x,t)\,dx= \int_{\T}|w(x)|^2\,dx+\eps\quad\textrm{ for all }t\in[0,T].
\end{equation}

Let $\tilde v_0(\cdot)=\tilde v(\cdot,0)\in C^{\infty}(\T;\R^3)$ and note that
\begin{eqnarray*}
	\int_{\T}\tr\tilde R(x,0)\,dx&\leq& \|w\|_{L^2}^2-\|\tilde v_0\|_{L^2}^2+\eps\\
	&\leq& 2\eps\|\tilde v_0\|_{L^2}+\eps^2+\eps\\
	&\leq &2\eps\|w\|_{L^2}+3\eps^2+\eps\\
	&\leq &C\eps
\end{eqnarray*}
for some constant $C$ depending only on $w$. 

Next, apply Corollary \ref{c:striadapt} to obtain a $C^\theta$-adapted subsolution 
$(\bar v, \bar p, \bar R)$ with 
$$
\int_{\T} \bar v\otimes\bar v+\bar R\,dx=\int_{\T} \tilde v\otimes \tilde v+\tilde R\,dx\quad\textrm{ for all }t\in[0,T]
$$
and 
$$
\|\bar v-\tilde v\|_{H^{-1}}\leq \sigma
$$
for some $\sigma>0$ to be fixed later. Let $v_0(\cdot)=\bar{v}(\cdot,0)$ and note that
\begin{eqnarray*}
	\|v_0-\tilde v_0\|_{L^2}^2&=&\|v_0\|_{L^2}^2-\|\tilde v_0\|_{L^2}^2-2\int_{\T}\tilde v_0\cdot (v_0-\tilde v_0)\,dx\\
	&\leq& \int_{\T}\tr\tilde R(x,0)\,dx-2\int_{\T}\tilde v_0\cdot (v_0-\tilde v_0)\,dx\\
	&\leq &C\eps+2\sigma\|\tilde v_0\|_{H^1}.
\end{eqnarray*}
In particular, by choosing first $\eps>0$ sufficiently small and then $\sigma$ sufficiently small, we may ensure that
$$
\|v_0-\tilde v_0\|_{L^2}\leq \delta.
$$
Observing that for all $t\in[0,T]$
\begin{eqnarray*}
\int_{\T} |\bar v(x,t)|^2+\tr \bar R(x,t)\,dx&=&\int_{\T} |\tilde v(x,t)|^2+\tr \tilde R(x,t)\,dx\\
&=&\int_{\T}|w(x)|^2\,dx+\eps\\
&=&\int_{\T} |\bar v(x,0)|^2+\tr \bar R(x,0)\,dx\\
&=&\int_{\T} |v_0(x)|^2\,dx,	
\end{eqnarray*}
we see that, according to Corollary \ref{c:wild}, $v_0\in C^\theta(\T;\R^3)$ is a wild initial datum in $C^{\theta}$. This completes the proof.
\end{proof}

\bigskip

\begin{proof}[Proof of Theorem \ref{t:hprinciple}]
The proof is an immediate consequence of Corollary \ref{c:striadapt} and Proposition \ref{P:adaptsol}.	
\end{proof}

%%%%%%%%%%%%%%%%%%%%%%%%%%%%%%%%%%%%%%%%%%%%%
%%%%%%%%%%%%%%%%%%%%%%%%%%%%%%%%%%%%%%%%%%%%%
%%%%%%%%%%%%%%%%%%%%%%%%%%%%%%%%%%%%%%%%%%%%%	
\section{Strong from strict subsolutions}\label{s:strostri}

\begin{proof}[Proof of Proposition \ref{p:stristro}]
The proof is a simplified version of the construction in \cite{Buckmaster:2014ty}, where we replace Beltrami flows by Mikado flows. Since the aim is only to produce a single reduction in the Reynolds tensor, there is no iteration here but just one single perturbation step.

\smallskip

Let 
$$
\delta_0=\inf\Bigl\{R(x,t)\xi\cdot\xi:\,|\xi|=1,\,x\in\T,t\in[0,T]\Bigr\}.
$$
Observe that since $R$ is a smooth positive definite tensor on the compact set $\T\times[0,T]$, $\delta_0>0$. Let $\delta<\delta_0$, so that $R-\delta\Id$ is positive definite on $[0,T]\times\T$.

\smallskip

We define $\bar v$ as 
$$
\bar v:=v+w=v+w_o+w_c,
$$
where the oscillation term $w_o$ and the corrector term $w_c$ are defined as follows. Let
$\Phi:\T\times [0,T]\to\T$ be the (periodic) flow of $v$ defined by
\begin{equation}
 \left\{\begin{aligned}
         \partial_t\Phi+v\cdot\nabla\Phi&=0\\
         \Phi(x,0)&=x,\qquad\forall\,x\in\T
        \end{aligned}\right.
\end{equation}
Set 
\begin{equation}\label{E:Rtilde}
 \tilde R(x,t)=D\Phi(x,t)\bigl(R(x,t)-\delta\Id\bigr)D\Phi^T(x,t). 
\end{equation}
Then $\tilde R$ is a positive-definite tensor on $\T\times[0,T]$, hence the set
$$
\mathcal{N}:=\left\{\tilde R(x,t):\,(x,t)\in \T\times [0,T]\right\}
$$
is a compact subset of $\SS^{3\times3}_+$ and by Lemma \ref{l:Mikado} there exists 
a smooth vectorfield $W:\mathcal{N}\times\T\to\R^3$ such that \eqref{e:Mikado}-\eqref{e:MikadoWW} hold.
We define
\begin{equation}
 w_o(x,t):=D\Phi(x,t)^{-1}W\left(\tilde R(x,t),\lambda\Phi(x,t)\right).
\end{equation}
Since the vector field $\xi\mapsto W(S,\xi)$ has zero average and is divergence-free, there exists 
$U=U(S,\xi)$ such that $\curl_\xi U=W$.
Then, for any\footnote{For the convenience of the reader we include the corresponding calculation in index notation, using the Einstein summation convention and the standard Kroenecker $\delta_{ij}$ and Levi-Civita alternating tensors $\varepsilon_{ijk}$: Using the identities $(\mathrm{cof}D\Phi)_{ij}=\frac{1}{2}\varepsilon_{ipq}\varepsilon_{jkl}\partial_k\Phi_p\partial_l\Phi_q$ and $\varepsilon_{jkl}\varepsilon_{jmn}=\delta_{km}\delta_{ln}-\delta_{kn}\delta_{lm}$, we obtain
\begin{align*}
\Bigl[\mathrm{cof}D\Phi^T(\curl U)(\Phi)\Bigr]_i&=\tfrac{1}{2}\varepsilon_{ikl}\varepsilon_{jpq}\partial_k\Phi_p\partial_l\Phi_q\varepsilon_{jmn}(\partial_mU_n)(\Phi)\\
&=\tfrac{1}{2}\varepsilon_{ikl}\partial_{k}\Phi_p\partial_l\Phi_q(\partial_pU_q-\partial_qU_p)(\Phi)\\
&=\varepsilon_{ikl}\partial_{k}\Phi_p\partial_l\Phi_q(\partial_pU_q)(\Phi)\\
&=\varepsilon_{ikl}\partial_k\Bigl(\partial_l\Phi_qU_q(\Phi)\Bigr)=\Bigl[\curl\bigl(D\Phi^TU(\Phi)\bigr)\Bigr]_{i}\,.
\end{align*}
} $S\in \SS^{3\times3}_+ $
\begin{equation*}
\begin{split}	
 \curl \left(D\Phi^TU(S,\lambda\Phi)\right)&=-\lambda\mathrm{cof}D\Phi^T\curl_{\xi}U(S,\lambda\Phi)\\
 &=-\lambda D\Phi^{-1}W(S,\lambda\Phi).
\end{split}
\end{equation*}
Therefore we set
\begin{equation}\label{e:Defw}
 w_c(x,t):=-\frac{1}{\lambda}\curl \left(D\Phi^T(x,t)U(\tilde R(x,t),\lambda\Phi(x,t))\right)-w_o(x,t),
\end{equation}
so that $\div w=0$.
Observe next that, as a consequence of the periodicity, the smoothness and \eqref{e:MikadoW}, $W$ can be written as
\begin{equation}\label{e:WFourier}
	W(S,\xi)=\sum_{k\in\Z^3\setminus\{0\}}a_k(S)B_ke^{ik\cdot \xi}, 
\end{equation}
for some complex vectors $B_k\in \C^3$ with $B_k\cdot k=0, |B_k|=1$, such that
$$
\sup_{S\in \mathcal N}|D_R^Na_k(S)|\leq \frac{C}{|k|^m}
$$
for any $m,N\in \N$ with a constant $C=C(\mathcal N, N,m)$. Therefore we can write
\begin{equation}\label{E:woexp}
 w_o(x,t)=\sum_{k\in\Z^3\setminus\{0\}}b_k(x,t)e^{ik\cdot\lambda\Phi(x,t)},
\end{equation}
where $b_k\in C^{\infty}(\T\times [0,T])$ with space-time $C^N$ norms bounded as
\begin{equation}\label{E:woexp2}
 \|b_k\|_{C^N(\T\times [0,T])}\leq \frac{C(N, R, v)}{|k|^4}.
\end{equation}
Furthermore, since $\det D\Phi\equiv 1$ ($v$ is divergence free), there exists a constant $c_0>1$ so that 
\begin{equation*}
 c_0^{-1}|k|\leq|D\Phi^T(x,t)k|\leq c_0|k|\quad\textrm{ for all }x\in\T,\,t\in[0,T].
\end{equation*}
In particular for any $k\in\Z^3$ the phase function 
$$
\varphi_k:=\frac{k}{|k|}\cdot\Phi
$$ 
satisfies the assumptions of Lemma \ref{L:stat} with constant $c_0$ independent of $k$.

Analogously, we have
\begin{equation}\label{E:wcexp}
 w_c(x,t)=\frac{1}{\lambda}\sum_{k\in\Z^3\setminus\{0\}}c_k(x,t)e^{ik\cdot\lambda\Phi(x,t)}
\end{equation}
with
\begin{equation}\label{E:wcexp2}
 \|c_k\|_{C^N(\T\times [0,T])}\leq \frac{C(N, R, v)}{|k|^4}.
\end{equation}
Indeed, from \eqref{e:WFourier} we deduce
$$
U(S,\xi)=\sum_{k\in\Z^3\setminus\{0\}}a_k(S)\frac{ik\times B_k}{|k|^2}e^{ik\cdot \xi}
$$
and therefore
$$
w_c=-\frac{i}{\lambda}\sum_{k\in\Z^3\setminus\{0\}}\nabla (a_k(\tilde R))\times \frac{D\Phi^T(k\times B_k)}{|k|^2}e^{i\lambda\Phi\cdot k}.
$$
In particular we see that $w=w_o+w_c$ is a smooth divergence-free vector field on $\T\times[0,T]$ such that
\begin{equation}\label{e:wowcC0}
\|w_o\|_0\leq C,\quad \|w_c\|_0\leq \frac{C}{\lambda}
\end{equation}
for some constant depending on $v,R$ but not on $\lambda$.

\bigskip

Set $\bar p=p$ and define $\bar R=\delta \Id+R_{11}+R_{12}$ with
\begin{equation}\label{e:defR11}
\begin{split}
R_{11}&:=-\mathcal{R}(F)-(w_c\otimes \bar{v}+w_o\otimes w_c),\\
F&:=\div(w_o\otimes w_o-R)+(\partial_t+v\cdot\nabla)w_o+w\cdot\nabla v+\partial_t w_c,\\
R_{12}&:=\fint_{\T}v\otimes v+R-\delta\Id -\bar v\otimes \bar v-R_{11}\,dx.
\end{split}
\end{equation}
Observe that $R_{12}$ is a function of $t$ only, hence using \eqref{e:Rproperty2}
\begin{align*}
-\div \bar{R}&=-\div R_{11}\\
&=F-\fint_{\T}F\,dx+\div(w_c\otimes \bar{v}+w_o\otimes w_c)\\
&=\partial_t\bar{v}+\div(\bar{v}\otimes\bar{v})+\nabla\bar{p}-\fint_{\T}F\,dx.
\end{align*}
Since $\div w=\div v=0$, we can write
\begin{equation*}
	F=\div(w_o\otimes w_o-R+w_o\otimes v+v\otimes w)+\partial_tw.
\end{equation*}
But then, since $v,R,w_o,w_c$ are periodic and --using \eqref{e:Defw}-- $w=\curl(z)$ for some periodic vectorfield $z$, it follows that 
$$
\int_{\T}F\,dx=0.
$$
We also easily see that for every $t$
$$
\int_{\T}\bar{v}\otimes\bar{v}+\bar{R}\,dx=\int_{\T}v\otimes v+R\,dx.
$$

\smallskip

Next, we will obtain estimates for the $H^{-1}$ norm of the perturbation $w$ and for the $C^0$ norm of the new Reynolds stress $\bar R$. They will turn out to be bounded by fixed constants --depending on the subsolution $(v,p,R)$ -- times a negative power of the parameter $\lambda$. Since this dependence is not relevant for our purposes, such constants will be denoted by the letter $C$, whose value may change from line to line. 

Moreover, in the estimates below we will repeatedly apply Lemma \ref{L:stat}, which requires the use of H\"older spaces $C^\alpha$. Therefore, for the sequel we fix $\alpha\in (0,1)$ and note that the estimates will depend also on $\alpha$. However, the precise choice of $\alpha$ is not important.

\smallskip
   
For any test function $f\in C^1(\T;\R^3)$ we have
\begin{align*}
 \Big|\int_{\T} w\cdot f\,dx\Big|&=\Big|\int_{\T} \sum_{k\in\Z^3\setminus\{0\}}(b_k+\lambda^{-1}c_k)\cdot fe^{i\lambda\Phi\cdot k}\,dx\Big|\\
&\leq\sum_{k\in\Z^3\setminus\{0\}}\Big|\int_{\T} (b_k+c_k)\cdot fe^{i\lambda|k|\varphi_k}\,dx\Big|\\
&\overset{\eqref{E:stati}}{\leq} C\sum_{k\in\Z^3\setminus\{0\}}\frac{(\|b_k\|_{1}+\|c_k\|_1))\|f\|_1}{\lambda|k|}\\
&\leq C\frac{\|f\|_1}{\lambda}\sum_{k\in\Z^3\setminus\{0\}}\frac{1}{|k|^5}\\
&\leq C\frac{\|f\|_1}{\lambda},
\end{align*}
hence
\begin{equation}
 \|\bar v-v\|_{H^{-1}}\leq\frac{C}{\lambda}.
\end{equation}
Now, let us proceed with the estimates for $\bar{R}$. To this end we write
\begin{equation}\label{e:defR11_1}
\begin{split}
R_{11}=-\mathcal{R}&\left(\div(w_o\otimes w_o-R)\right)-\mathcal{R}\left((\partial_t+v\cdot\nabla)w_o\right)\\
&-\mathcal{R}(w\cdot\nabla v)-\mathcal{R}(\partial_t w_c)-(w_c\otimes \bar{v}+w_o\otimes w_c),
\end{split}
\end{equation}
and we will estimate each term in the decomposition 
of $R_{11}$ in \eqref{e:defR11_1} successively.
By using the periodicity and \eqref{e:MikadoWW}, for any $S\in\SS^{3\times3}_+$ and $\xi\in\T$
$$
W(S,\xi)\otimes W(S,\xi)=S+\sum_{k\in\Z^3\setminus\{0\}}V_k(S)e^{i\xi\cdot k},
$$
where, as a consequence of \eqref{e:Mikado}, 
\begin{equation}\label{e:MikadoVk}
	V_k(S)k=0\quad\textrm{ for all $k\in\Z^3$ and $S\in\SS^{3\times3}_+$}.
\end{equation}
Hence
\begin{equation}\label{e:wowo}
\begin{split}
 w_o\otimes w_o&=R-\delta \Id+\sum_{k\in\Z^3\setminus \{0\}}D\Phi^{-1}V_k(\tilde R)D\Phi^{-T}e^{i\lambda\Phi\cdot k}\\
 &=R-\delta\Id  +\sum_{k\in\Z^3\setminus \{0\}}d_ke^{i\lambda\Phi\cdot k},
\end{split}
\end{equation}
where we have set $d_k=D\Phi^{-1}V_k(\tilde R)D\Phi^{-T}$. Observe that
\begin{equation*}
\begin{split}
\div\left(d_ke^{i\lambda\Phi\cdot k}\right)&=\div\left(d_k \right)e^{i\lambda\Phi\cdot k}+i\lambda D\Phi^{-1}V_k(\tilde R)D\Phi^{-T}D\Phi^T k e^{i\lambda\Phi\cdot k}\\
&\overset{\eqref{e:MikadoVk}}{=}\div \left(d_k\right)e^{i\lambda\Phi\cdot k}.
\end{split}
\end{equation*}
Therefore
\begin{equation}
 \div(w_o\otimes w_o-R)=\sum_{k\in\Z^3\setminus \{0\}}\div(d_k)e^{i\lambda|k|\varphi_k}\,.
\end{equation}
As before, since $W$ is smooth $d_k$ satisfies estimates of the form
\begin{equation*}
 \|d_k\|_{C^N(\T\times [0,T])}\leq \frac{C(N,R, v)}{|k|^4},
\end{equation*}
and therefore
\begin{equation}
\begin{split}
 \|\mathcal R(\div(w_o\otimes w_o-R))\|_{\alpha}& \leq \sum_{k\neq 0}\|\mathcal{R}(\div(d_k)e^{i\lambda|k|\varphi_k})\|_{\alpha}\\
& \overset{\eqref{E:statii}}{\leq} \frac{C}{\lambda^{1-\alpha}}\sum_{k\neq 0}|k|^{-5}\\
 &\leq \frac{C}{\lambda^{1-\alpha}}.
\end{split}
\end{equation}
Since $(\partial_t+v\cdot\nabla)\Phi=0$, the transport term
\begin{align}
 \partial_tw_o+v\cdot\nabla w_o=\sum_{k\in\Z^3\setminus\{0\}}(\partial_tb_k+v\cdot \nabla b_k)e^{i\lambda|k|\varphi_k}
\end{align}
 can again by \eqref{E:statii} be estimated as
\begin{equation}
 \|\mathcal R(\partial_tw_o+v\cdot\nabla w_o)\|_{\alpha}\leq \frac{C}{\lambda^{1-\alpha}}.
\end{equation}
Similarly we obtain
\begin{equation*}
\begin{split}
 \|\mathcal{R}(w\cdot\nabla v)\|_{\alpha}&\leq  \frac{C}{\lambda^{1-\alpha}}\,,\\
 \|\mathcal R(\partial_tw_c)\|_{\alpha}&\leq \frac{C}{\lambda^{2-\alpha}}\,,
\end{split}
\end{equation*}
and
\begin{equation*}
 \|w_c\otimes \bar v+w_o\otimes w_c\|_{0}\leq C\|w_c\|_0\leq \frac{C}{\lambda}.
\end{equation*}
In summary, we obtain 
\begin{equation}\label{E:R11est}
 \|R_{11}\|_0\leq \|R_{11}\|_\alpha\leq \frac{C}{\lambda^{1-\alpha}}.
\end{equation}
Concerning $R_{12}$, we calculate
\begin{align*}
	R_{12}&=\fint_{\T} R-\delta\Id-w_o\otimes w_o-w\otimes v-v\otimes w\,dx\\
	&-\fint_{\T}w_o\otimes w_c+w_c\otimes w_o+w_c\otimes w_c+R_{11}\,dx.
\end{align*}
Using \eqref{e:wowo} and Lemma \ref{L:stat} the first term can be estimated by $C\lambda^{-1}$, whereas for the second term we can use the $C^0$ estimates for $w_o$, $w_c$ and $R_{11}$ to again obtain the bound $C\lambda^{-1+\alpha}$. Consequently we obtain
\begin{equation}\label{E:R12est}
\|R_{12}\|_0\leq \frac{C}{\lambda^{1-\alpha}}.
\end{equation}
Finally, we turn to \eqref{e:H-1stristo2}. We have
\begin{align*}
  \bar{v}\otimes\bar v+\bar R-v\otimes v-R&=w_o\otimes w_o-(R-\delta\Id)+v\otimes w+w\otimes v+\\
  	&+w_o\otimes w_c+w_c\otimes w_o+w_c\otimes w_c+R_{11}+R_{12}.
\end{align*}
Using \eqref{E:woexp}, \eqref{E:wcexp}, \eqref{e:wowo} and \eqref{E:stati} we deduce that
for any $f\in C^1(\T;\R^3)$
$$
\left|\int_{\T}[w_o\otimes w_o-(R-\delta\Id)+v\otimes w+w\otimes v]f\,dx\right|\leq C\frac{\|f\|_{C^1}}{\lambda},
$$
whereas the $C^0$ estimates \eqref{e:wowcC0}, \eqref{E:R11est} and \eqref{E:R12est} easily imply 
$$
\left|\int_{\T}[w_o\otimes w_c+w_c\otimes w_o+w_c\otimes w_c+R_{11}+R_{12}]f\,dx\right|\leq C\frac{\|f\|_{C^0}}{\lambda^{1-\alpha}}.
$$
From this we can conclude that
$$
\|\bar{v}\otimes\bar v+\bar R-v\otimes v-R\|_{H^{-1}}\leq \frac{C}{\lambda^{1-\alpha}}\,.
$$

In summary, we have shown that $(\bar v,\bar p,\bar R)$ is a smooth subsolution satisfying \eqref{E:stristro2} such that
$$
\|v-\bar{v}\|_{H^{-1}}=O(\lambda^{-1}),
$$
and moreover
$$
\frac{1}{3}\tr\bar{R}=\delta + O(\lambda^{\alpha-1}),\quad \mathring{\bar{R}}=O(\lambda^{\alpha-1}).
$$
Thus, by choosing $\lambda$ sufficiently large we can ensure that $(\bar v,\bar p,\bar R)$ is a strong subsolution 
with \eqref{E:stristro3} such that also \eqref{E:H-1stristro}, \eqref{e:H-1stristo2} and \eqref{E:stristro1} are satisfied. This completes the proof.
\end{proof}

%%%%%%%%%%%%%%%%%%%%%%%%%%%%%%%%%%%%%%%%%%%%%
%%%%%%%%%%%%%%%%%%%%%%%%%%%%%%%%%%%%%%%%%%%%%
%%%%%%%%%%%%%%%%%%%%%%%%%%%%%%%%%%%%%%%%%%%%%

\section{Main perturbation step}\label{S:mainper}

In this section we recall the basic construction from \cite{Buckmaster:2014ty}. We state the result in a slightly more general form, more tailored for our purposes.

\begin{definition} Given $b>1$ we will call a sequence of numbers $(\delta_q,\lambda_q)$, $q\in\N$, with $\lambda_q\in\N$, {\it $b$-admissible} if the inequalities
\begin{equation}\label{e:b-admissible}
\delta_{q+1}\leq\frac{1}{2}\delta_q,\quad \lambda_q\leq \lambda_{q+1}^{\frac{2}{b+1}},\quad \delta_q^{\sfrac12}\lambda_q^{\sfrac15}\leq \delta_{q+1}^{\sfrac12}\lambda_{q+1}^{\sfrac15}
\end{equation}
are satisfied for any $q\in\N$.
\end{definition}

It is easy to see  (c.f.~\cite[Section 6]{Buckmaster:2014ty}) that if 
$$
\delta_q=a^{-b^q},\quad a^{cb^{q+1}}\leq \lambda_q \leq 2a^{cb^{q+1}},
$$
with $b>1$ and  $bc>5/2$, then $(\delta_q,\lambda_q)$ is $b$-admissible, provided $a\gg 1$ is sufficiently large (depending only on $b$ and $c$). 

We also recall from \cite{DeLellis:2013im,Buckmaster:2014ty} the following geometric lemma:
\begin{lemma}[Geometric Lemma]\label{l:split}
There exists $r_0>0$ and $\bar{\lambda} > 1$ with the following property.
There exist disjoint subsets $\Lambda_1,\Lambda_2\subset \{k\in \Z^3:\,|k|=\bar{\lambda}\}$
and smooth positive functions 
\[
\gamma^{(j)}_k\in C^{\infty}\left(B_{2r_0} (\Id)\right) \qquad j\in \{1,\dots, N\}, k\in\Lambda_j
\]
such that
\begin{itemize}
\item[(a)] $k\in \Lambda_j$ implies $-k\in \Lambda_j$ and $\gamma^{(j)}_k = \gamma^{(j)}_{-k}$;
\item[(b)] For each $R\in B_{2r_0} (\Id)$ we have the identity
\begin{equation}\label{e:split}
R = \frac{1}{2} \sum_{k\in\Lambda_j} \left(\gamma^{(j)}_k(R)\right)^2 \left(\Id - \frac{k}{|k|}\otimes \frac{k}{|k|}\right) 
\qquad \forall R\in B_{2r_0}(\Id)\, .
\end{equation}
\end{itemize}
\end{lemma}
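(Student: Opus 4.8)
The plan is to reduce Lemma \ref{l:split} to a convex-geometry statement about the finite cone generated by rank-two projectors, combined with an elementary arithmetic trick that realizes prescribed rational directions as integer vectors of a common length.

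\textbf{The continuous decomposition.} First I would record that for $e\in S^2$ the matrix $\Id - e\otimes e$ is the orthogonal projection onto $e^\perp$, hence positive semidefinite, and that $\fint_{S^2}(\Id - e\otimes e)\,d\sigma(e) = \Id - \tfrac13\Id = \tfrac23\Id$. Moreover the family $\{\Id - e\otimes e: e\in S^2\}$ affinely spans the hyperplane $\{\tr = 2\}\subset\SS^{3\times3}$: the differences $f\otimes f - e\otimes e$ span the trace-free symmetric matrices since $\{e\otimes e\}$ spans $\SS^{3\times3}$. Being the barycenter of the uniform measure on a compact set whose affine hull is all of $\{\tr=2\}$, the matrix $\tfrac23\Id$ lies in the relative interior of the convex hull of $\{\Id - e\otimes e\}$; passing to the convex cone $\mathcal C\subset\SS^{3\times3}$ it generates, $\Id$ lies in the interior of $\mathcal C$.

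\textbf{From directions in $S^2$ to integer vectors.} Since rational directions $p/|p|$, $p\in\Z^3\setminus\{0\}$, are dense in $S^2$ and ``$\Id$ lies in the interior of the cone generated by a finite set'' is an open condition, I can choose two \emph{disjoint}, $\pm$-symmetric finite sets $P_1,P_2\subset\Z^3\setminus\{0\}$ such that for each $j$ already $\Id\in\mathrm{int}\,\mathcal C_j$, where $\mathcal C_j$ is the cone generated by $\{\Id - \hat p\otimes\hat p: p\in P_j\}$, $\hat p := p/|p|$. To make the lengths equal, set $\bar\lambda := \prod_{p\in P_1\cup P_2}|p|$ and $\Lambda_j := \{(\bar\lambda/|p|)\,p: p\in P_j\}$; since $\bar\lambda/|p|\in\Z$, we get $\Lambda_j\subset\{k\in\Z^3:|k|=\bar\lambda\}$, the $\Lambda_j$ remain disjoint and $\pm$-symmetric, and $k/|k| = \hat p$ leaves $\Id - k/|k|\otimes k/|k|$ unchanged, so $\Id\in\mathrm{int}\,\mathcal C_j$ with the new generators $M_k := \Id - k/|k|\otimes k/|k|$, $k\in\Lambda_j$.

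\textbf{Smooth positive coefficients, and the main obstacle.} Fix $j$. Since $\Id\in\mathrm{int}\,\mathcal C_j$, the $M_k$ span $\SS^{3\times3}$ and there are reals $c^0_k>0$ with $c^0_k = c^0_{-k}$ and $\Id = \tfrac12\sum_{k\in\Lambda_j}c^0_k M_k$. The linear surjection $L:(c_k)\mapsto\tfrac12\sum_k c_k M_k$ admits a linear right inverse which may be chosen equivariant under $k\mapsto -k$; calling it $L^{-1}$ and setting $c_k(R) := c^0_k + [L^{-1}(R-\Id)]_k$ gives affine, hence $C^\infty$, functions with $c_k(R) = c_{-k}(R)$ and $\tfrac12\sum_k c_k(R)M_k = R$ for all $R$. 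Choosing $r_0>0$ small enough that $c_k(R)>0$ on $B_{2r_0}(\Id)$ for all $k$ and both $j$, the functions $\gamma^{(j)}_k(R) := \sqrt{c_k(R)}$ lie in $C^\infty(B_{2r_0}(\Id);(0,\infty))$ and satisfy (a) and \eqref{e:split}. The only genuinely delicate point is the middle step: one must simultaneously approximate enough directions by rational ones to keep $\Id$ interior to the generated cone, realize these as integer vectors of one common length, and keep the two families disjoint and $\pm$-symmetric. The common-length issue is resolved by taking $\bar\lambda$ to be the product of all the lengths $|p|$ (so that $\bar\lambda/|p|\in\Z$), while openness of the interiority condition together with a Carath\'eodory-type finiteness argument takes care of the rest.
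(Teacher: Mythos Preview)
The paper does not actually prove this lemma; it is simply recalled from \cite{DeLellis:2013im,Buckmaster:2014ty}. Your overall strategy---show that $\Id$ lies in the interior of the cone in $\SS^{3\times3}$ generated by the projectors $\Id - e\otimes e$, pass to finitely many rational directions by openness and a Carath\'eodory-type argument, then build the coefficient functions via an affine right inverse of the (surjective) map $(c_k)\mapsto\tfrac12\sum_k c_k M_k$ and take square roots---is sound and is essentially how such geometric lemmas are established.

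There is, however, a genuine gap in the common-length step. You set $\bar\lambda := \prod_{p}|p|$ and assert that $\bar\lambda/|p|\in\Z$, but for $p\in\Z^3$ the length $|p|$ is typically irrational (e.g.\ $p=(1,1,0)$ gives $|p|=\sqrt{2}$), and the product $\prod_{q\neq p}|q|$ of such surds has no reason to be an integer. More fundamentally, rescaling $p\in\Z^3$ by a positive integer $m$ yields a lattice vector of length $m|p|$; for several $p_i$ to be rescaled to a \emph{common} length one needs the integers $m_i^2|p_i|^2$ to all coincide, which forces the squarefree parts of the integers $|p_i|^2$ to agree. With an arbitrary choice of integer direction vectors this simply fails, so as written the construction does not produce $\Lambda_j\subset\Z^3$. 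The repair is to restrict the approximating directions to \emph{rational points of $S^2$}, i.e.\ $p\in\Z^3$ with $|p|\in\Z$; these are dense in $S^2$ (stereographic projection from a rational pole is a birational isomorphism with $\R^2$, so rational points on the sphere are dense). With all $|p|\in\Z$ your formula $\bar\lambda=\prod_p|p|$ (or better, the least common multiple) does make each $\bar\lambda/|p|$ a positive integer, and the remainder of your argument goes through unchanged.
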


Recall from Section \ref{s:definitions} that a (smooth) subsolution $(v,p,R)$ is a solution of the system
\begin{align*}
\partial_t v+\div (v\otimes v)+\nabla p&=-\div R\\
\div v&=0
\end{align*}
with $R(x,t)\geq 0$, and we say that the subsolution is {\it strong} if in addition \eqref{e:Rstr3} and \eqref{e:Rdeltaest} hold. This amounts to the condition that the tensor $R$ can be written as
\begin{equation}\label{e:Rdecompose} 
R(x,t)=\rho(t)\Id+\mathring{R}(x,t),
\end{equation}
with $\mathring{R}$ traceless and 
\begin{equation}\label{e:Rstrong}
\left|\mathring{R}(x,t)\right|\leq r_0\rho(t)\quad\textrm{ for all }(x,t).
\end{equation}

\begin{proposition}\label{p:basic}
Let $b>1$ and let $(\delta_q,\lambda_q)_{q\in\N}$ be a $b$-admissible sequence. 
Let $(v_0,p_0,R_0)$ be a smooth subsolution on $\T\times(T_1,T_2)$ and let 
$S\in C^{\infty}(\T\times(T_1,T_2);\S^{3\times 3})$ be a smooth matrix field such that 
$\tr S$ is a function of time only, and $\mathring{S}(x,t):=S(x,t)-\frac{1}{3}\tr S(t)\Id$ satisfies
\begin{equation}\label{e:basic:assumption1}
\left|\mathring{S}(x,t)\right|\leq \tfrac{1}{3}r_0\tr S(t)\quad\textrm{ for all }(x,t).
\end{equation}
Furthermore, let $M_0>0$ and $q\in \N$ be such that, for all $t\in (T_1,T_2)$
\begin{equation}\label{e:basic:assumption2}
\begin{split}
\|v_0\|_0&\leq M_0,\quad [v_0]_1\leq M_0\delta_q^{\sfrac12}\lambda_q,\\
[R_0]_1&\leq M_0\delta_{q+1}\lambda_q,\quad [p_0]_1\leq M_0^2\delta_q\lambda_q\,,
\end{split}
\end{equation}
\begin{equation}\label{e:basic:assumption3}
\frac{1}{3}|\tr S|\leq 4\delta_{q+1},
\end{equation}
and
\begin{equation}\label{e:basic:assumption4}
[S]_1\leq M_0\delta_{q+1}\lambda_q,\quad 
\|(\partial_t+v_0\cdot\nabla)S\|_0\leq M_0\delta_{q+1}\delta_q^{\sfrac12}\lambda_q.
\end{equation}

Then, for any $\eps>0$ there exists smooth $(v_1,p_1)\in C^{\infty}(\T\times(T_1,T_2);\R^3\times\R)$ and a smooth matrix field $\mathcal{E}\in C^{\infty}(\T\times(T_1,T_2);\S^{3\times 3})$ such that $\tr\mathcal{E}$ is a function of $t$ only, 
\begin{equation}\label{e:basic:average}
\int_{\T}v_1\otimes v_1\,dx=\int_{\T}v_0\otimes v_0+S+\mathcal{E}\,dx\quad\textrm{ for all $t$}
\end{equation}
and the equations
\begin{equation}\label{e:basic:equations}
\begin{split}
\partial_t v_1+\div (v_1\otimes v_1)+\nabla p_1&=-\div (R_0-S-\mathcal{E})\\
\div\, v_1&=0
\end{split}
\end{equation}
hold in $\T\times (T_1,T_2)$. Moreover, we have the estimates
\begin{eqnarray}
&\|v_1-v_0\|_{H^{-1}(\T)}\leq \overline{M}\delta_{q+1}^{\sfrac12}\lambda_{q+1}^{-1}\,,\label{e:basic:H-1}\\
&\|v_1-v_0\|_0\leq M\delta_{q+1}^{\sfrac12}\,,\quad[v_1-v_0]_1\leq M\delta_{q+1}^{\sfrac12}\lambda_{q+1}\,,\label{e:basic:v}\\
&\|p_1-p_0\|_0\leq M^2\delta_{q+1}\,,\quad [p_1-p_0]_1\leq M^2\delta_{q+1}\lambda_{q+1}\,,\label{e:basic:p}
\end{eqnarray}
with
$$
M=\overline{M}+C\lambda_{q+1}^{-\beta}\,.
$$
Furthermore, the error $\mathcal{E}$ satisfies 
\begin{equation}\label{e:basic:error}
\|\mathcal{E}\|_0+\frac{1}{\lambda_{q+1}}[\mathcal{E}]_1+\frac{1}{\delta_{q+1}^{\sfrac12}\lambda_{q+1}}\|(\partial_t+v_1\cdot\nabla)\mathcal{E}\|_0\leq C\delta_{q+1}^{\sfrac34}\delta_q^{\sfrac14}\lambda_q^{\sfrac12}\lambda_{q+1}^{\eps-\sfrac12}
\end{equation}
and similarly
\begin{equation}\label{e:basic:H-12}
\|v_1\otimes v_1-v_0\otimes v_0-S-\mathcal{E}\|_{H^{-1}(\T)}\leq C\delta_{q+1}^{\sfrac34}\delta_q^{\sfrac14}\lambda_q^{\sfrac12}\lambda_{q+1}^{\eps-\sfrac12}\,.
\end{equation}

In the above $C=C(b,M_0,\eps)$, $\beta=\beta(b)>0$ and $\overline{M}>1$ is a geometric constant. Finally, for times $t\notin\mathrm{supp}\,\tr S$ we have $v_1=v_0$, $p_1=p_0$ and $\mathcal{E}=0$.

\end{proposition}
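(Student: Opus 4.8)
The plan is to follow the now-standard convex integration scheme of \cite{DeLellis:2013im,Buckmaster:2014ty}, but with Mikado flows playing the role of Beltrami flows for the principal oscillatory perturbation. First I would mollify the data $(v_0,p_0,R_0,S)$ at a length scale $\ell$ (to be chosen as a suitable negative power of $\lambda_{q+1}$, roughly $\ell\sim\lambda_q^{-1}\delta_{q+1}^{-\sfrac14}\delta_q^{-\sfrac14}\lambda_q^{-\sfrac12}\cdots$, balanced so the mollification error matches the target in \eqref{e:basic:error}), obtaining $v_\ell, p_\ell, R_\ell, S_\ell$ together with commutator estimates for $(\partial_t+v_\ell\cdot\nabla)$. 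Since $\tr S$ depends on $t$ only, write $S_\ell = \rho(t)\Id+\mathring S_\ell$ with $\rho\sim\delta_{q+1}$ by \eqref{e:basic:assumption3}; the matrix $R(x,t):=\Id-\mathring S_\ell(x,t)/\rho(t)$ then lies in $B_{2r_0}(\Id)$ by \eqref{e:basic:assumption1}, which is exactly the domain where Lemma \ref{l:split} applies.

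The perturbation is $w=w_o+w_c+w_t$ where the principal term has the form
$$
w_o(x,t)=\sqrt{\rho(t)}\,D\Phi_\ell^{-1}(x,t)\,W\bigl(R(x,t),\lambda_{q+1}\Phi_\ell(x,t)\bigr),
$$
with $\Phi_\ell$ the flow of $v_\ell$ and $W$ the Mikado field from Lemma \ref{l:Mikado} applied to the compact set $\mathcal N=\{R(x,t)\}\subset\subset\SS^{3\times3}_+$. Using \eqref{e:MikadoWW} and $\det D\Phi_\ell\equiv1$, averaging $w_o\otimes w_o$ over the fast variable reproduces $\rho D\Phi_\ell^{-1}R D\Phi_\ell^{-T}$, and one checks by a Nash-type cancellation (transporting the constant matrix $R$ back, as in the computation in Section \ref{s:strostri}) that this equals $\rho\Id - \mathring S_\ell$ plus lower-order terms — wait, more carefully: one wants $\langle w_o\otimes w_o\rangle \approx S_\ell$, so $W$ is applied directly to $S_\ell$ conjugated by $D\Phi_\ell$, exactly as in \eqref{E:Rtilde}, giving $\langle w_o\otimes w_o\rangle=S_\ell$ up to a fast-oscillating remainder with zero mean. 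The corrector $w_c$ is defined via the $\curl$-potential $U$ of $W$ so that $\div w=0$ (same identity as in the footnote of Section \ref{s:strostri}), and a temporal corrector $w_t$ is added to handle $\partial_t$ of the slow coefficients; then one sets $v_1=v_\ell+w$. The new pressure $p_1$ and error $\mathcal E$ are read off from the equation: the Reynolds stress splits into a transport error $\mathcal R\bigl((\partial_t+v_\ell\cdot\nabla)w_o\bigr)$, a flow/Nash error from $w\cdot\nabla v_\ell$, an oscillation error $\mathcal R\,\div(w_o\otimes w_o-S_\ell)$, corrector errors, and the mollification error, and $\mathcal E$ absorbs the spatial-mean part (a function of $t$) exactly as the $R_{12}$ term in Section \ref{s:strostri}, which guarantees \eqref{e:basic:average}.

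All the quantitative estimates \eqref{e:basic:H-1}--\eqref{e:basic:H-12} then follow from Lemma \ref{L:stat} (the stationary phase / Schauder estimates for $\mathcal R(ae^{i\lambda\varphi})$) applied term by term, since the phases $\varphi_k=\tfrac{k}{|k|}\cdot\Phi_\ell$ satisfy the hypotheses of that lemma with a $k$-uniform constant $c_0$ (because $\det D\Phi_\ell\equiv1$), and the Fourier coefficients of $W$ decay faster than any polynomial so the sums over $k$ converge; the $H^{-1}$ bounds come from \eqref{E:stati} with $N=1$, and the $C^0$/$C^1$ bounds on $w$ from the explicit form of $w_o$. The $b$-admissibility conditions \eqref{e:b-admissible} are precisely what is needed to close the estimates: $\lambda_q\le\lambda_{q+1}^{2/(b+1)}$ controls the loss from differentiating the slow coefficients against the optimal mollification scale, and $\delta_q^{\sfrac12}\lambda_q^{\sfrac15}\le\delta_{q+1}^{\sfrac12}\lambda_{q+1}^{\sfrac15}$ is the monotonicity making the $1/5$-rate consistent. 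The geometric constant $\overline M$ comes out of Lemma \ref{l:split} (the $L^\infty$ norm of $W$ depends only on $r_0$ and the $\gamma_k^{(j)}$), and the $C\lambda_{q+1}^{-\beta}$ correction in $M$ collects all the subleading contributions. The statement that $v_1=v_0$, $p_1=p_0$, $\mathcal E=0$ off $\mathrm{supp}\,\tr S$ is immediate because there $\rho(t)=0$ forces $w\equiv0$. The main obstacle I anticipate is \textbf{the bookkeeping of the error estimate \eqref{e:basic:error}}, specifically checking that the transport term $(\partial_t+v_1\cdot\nabla)\mathcal E$ — rather than $(\partial_t+v_0\cdot\nabla)\mathcal E$ — still obeys the stated bound after the change of drift from $v_\ell$ to $v_1$, which requires commuting the material derivative through $\mathcal R$ and controlling $w\cdot\nabla\mathcal E$; this, together with the precise choice of $\ell$ balancing mollification against oscillation errors, is where the delicate interplay with $b$-admissibility enters, but it is essentially identical to \cite[Section 6]{Buckmaster:2014ty} with Mikado flows substituted for Beltrami flows.
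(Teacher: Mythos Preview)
Your proposal differs from the paper's proof in two substantial ways.

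First, the paper does \emph{not} use Mikado flows for Proposition~\ref{p:basic}. It uses Beltrami flows, invoking the construction of \cite{Buckmaster:2014ty} essentially verbatim (with $S$ in place of $R_0$). The hypothesis \eqref{e:basic:assumption1}, namely $|\mathring S|\le r_0\rho$, is exactly the condition that $S/\rho\in B_{r_0}(\Id)$, so Lemma~\ref{l:split} applies and the Beltrami amplitudes $a_{kl}=\sqrt{\rho_l}\,\gamma_k(S_{\ell,l}/\rho_l)$ are well-defined. In the paper, Mikado flows are used only in Section~\ref{s:strostri} for the single non-iterative ``strict $\to$ strong'' reduction, where $R$ need not be near the identity; for the main perturbation step the paper deliberately stays with Beltrami so that the entire error analysis of \cite{Buckmaster:2014ty} can be quoted rather than redone. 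Your Mikado substitution would plausibly work, but it would require reproving all the quantitative estimates rather than citing them.

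Second, and more seriously, you describe a single flow map $\Phi_\ell$ over the whole time interval together with a ``temporal corrector $w_t$''. Neither is in the paper (nor in \cite{Buckmaster:2014ty}). The scheme there uses \emph{time-discretization}: a partition of unity $\chi_l(t)=\chi(\mu(t-l))$ at scale $\mu=\mu_q=\delta_{q+1}^{\sfrac14}\delta_q^{\sfrac14}\lambda_q^{\sfrac12}\lambda_{q+1}^{\sfrac12}$, with a separate backward flow $\Phi_l$ on each subinterval (started at $t=l/\mu$) and frozen amplitudes $S_{\ell,l}$. This localization is what keeps $D\Phi_l$ close to the identity and makes the transport error $(\partial_t+v_\ell\cdot\nabla)w_o$ controllable; a single $\Phi_\ell$ over all of $(T_1,T_2)$ would have derivatives growing like $\exp([v_0]_1 t)$, and a temporal corrector in the style of later Onsager schemes is not part of this construction. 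The parameter $\mu_q$ (together with $\ell_q=\delta_{q+1}^{-\sfrac18}\delta_q^{\sfrac18}\lambda_q^{-\sfrac14}\lambda_{q+1}^{-\sfrac34}$) is also what produces the specific power $\delta_{q+1}^{\sfrac34}\delta_q^{\sfrac14}\lambda_q^{\sfrac12}\lambda_{q+1}^{-\sfrac12}$ in \eqref{e:basic:error}, and the paper additionally verifies the new bounds \eqref{e:basic:stressC0}--\eqref{e:basic:stressC1} on the tensor-valued mean $\mathcal{E}^{(2)}(t)=\fint v_1\otimes v_1-v_0\otimes v_0-S\,dx$, which is the only part not already contained in \cite{Buckmaster:2014ty}.
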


The construction which lies at the heart of this proposition is precisely the construction used in \cite{Buckmaster:2014ty}, with $S=R_0$. In particular we do not claim any originality here, as  with minor modifications of \cite{Buckmaster:2014ty} one easily obtains Proposition \ref{p:basic}.

\begin{remark}
A remark concerning the constants in Proposition \ref{p:basic} is in order. Notice that in the assumptions \eqref{e:basic:assumption1}-\eqref{e:basic:assumption4} most estimates involve a constant $M_0$ (which in turn enters in the constant $C$ in the conclusions \eqref{e:basic:H-1}-\eqref{e:basic:error}), except for \eqref{e:basic:assumption3}, where we have written the constant $4$. The reason for this is that it is the quantity in \eqref{e:basic:assumption3} which solely determines the amplitude of the perturbation and hence results in the geometric constant $\overline{M}$. Of course more generally one could replace \eqref{e:basic:assumption3} by
$$
\frac{1}{3}|\tr S|\leq M_1\delta_{q+1},
$$
in which case $\overline{M}$ would depend on $M_1$ only. For our purposes this generalization is not useful and so we opted for introducing the minimal number of constants.
\end{remark}

\begin{proof}
Given $b>1$ and a $b$-admissible sequence $(\delta_q,\lambda_q)$, we set, following \cite[Section 6]{Buckmaster:2014ty},
\begin{equation}\label{e:def-muqlq}
\begin{split}
\mu=\mu_q&=\delta_{q+1}^{\sfrac14}\delta_q^{\sfrac14}\lambda_q^{\sfrac12}\lambda_{q+1}^{\sfrac12}\,,\\
\ell=\ell_q&=\delta_{q+1}^{-\sfrac18}\delta_q^{\sfrac18}\lambda_q^{-\sfrac14}\lambda_{q+1}^{-\sfrac34}\,.
\end{split}
\end{equation}
It can be verified directly by a short calculation (as in \cite[Section 6]{Buckmaster:2014ty}) that such a $b$-admissible sequence will then satisfy the conditions
\begin{equation}
\max\left\{\frac{\delta_q^{\sfrac12}\lambda_q\ell_q}{\delta_{q+1}^{\sfrac12}}\,;\frac{\delta_q^{\sfrac12}\lambda_q}{\mu_q}\,;\frac{1}{\ell_q\lambda_{q+1}}\,;\frac{\mu_q}{\delta_{q+1}^{\sfrac12}\lambda_{q+1}}\right\}\leq \lambda_{q+1}^{-\beta}
\end{equation}
for 
$$
\beta=\frac{1}{5}\frac{b-1}{b+1}>0.
$$
Consequently the conditions on the parameters from \cite[Section 2.6]{Buckmaster:2014ty} are satisfied and we may proceed as in \cite[Section 2]{Buckmaster:2014ty} to define $v:=\bar{v}+w$. We recall the main steps for the convenience of the reader. 

\bigskip

We fix a symmetric non-negative convolution kernel $\psi\in C^\infty_c (\R^3)$ and define
$v_\ell:=v_0*\psi_\ell$ and $S_\ell:=S*\psi_\ell$,
where the convolution is in the $x$ variables only. Next, we fix a smooth cut-off function $\chi\in C^\infty_c ((-\frac{3}{4}, \frac{3}{4}))$ such that 
$$
\sum_{l\in \Z} \chi^2 (x-l) = 1,
$$ 
and define $\Phi_l: \T\times [0,1]\to \T$ be the inverse flow of the periodic vector field $v_{\ell}$ starting at time $\tfrac{l}{\mu}$, i.e. the periodic solution of 
\begin{equation*}
\left\{\begin{array}{l}
\partial_t \Phi_l + v_{\ell}\cdot  \nabla \Phi_l =0\\ \\
\Phi_l (x,l \mu^{-1})=x\, .
\end{array}\right.
\end{equation*}
Set
$$
S_{\ell,l}(x,t)=S_{\ell}\left(\Phi_l(x,t),\tfrac{l}{\mu}\right),
$$
so that $S_{\ell,l}$ is the unique solution to the transport equation
\begin{equation*}
\left\{\begin{array}{l}
\partial_t S_{\ell,l} +v_\ell\cdot\nabla S_{\ell,l} = 0 \\
S_{\ell,l}(x,\frac l{\mu})=S_{\ell}(x,\frac{l}{\mu})\, .
\end{array}\right.
\end{equation*}
Note that, since by assumption $\tr S$ is a function of time only, 
$$
\rho_l:=\tfrac{1}{3}\tr S_{\ell,l}=\tfrac{1}{3}\tr S(\tfrac{l}{\mu})
$$ 
is a constant and moreover 
$$
\Bigl|\frac{S_{\ell,l}(x,t)}{\rho_l}-\Id\Bigr|\leq r_0.
$$
We next apply Lemma \ref{l:split}, denoting by $\Lambda^e$ and $\Lambda^o$ the corresponding families of frequencies in $\Z^3$, and set $\Lambda := \Lambda^o$ + $\Lambda^e$. For each $k\in \Lambda$ and each $l\in \Z\cap[0,\mu]$ we then set
\begin{align*}
\chi_l(t)&:=\chi\Bigl(\mu(t-l)\Bigr),\\
a_{kl}(x,t)&:=\sqrt{\rho_l}\gamma_k \left(\frac{S_{\ell,l}(x,t)}{\rho_l}\right),\\
w_{kl}(x,t)& := a_{kl}(x,t)\,B_ke^{i\lambda_{q+1}k\cdot \Phi_l(x,t)},
\end{align*}
where $B_k$ correspond to associated normalized Beltrami modes, i.e.~$B_k\in \C^3$ such that $|B_k|=1$, $B_{-k}=\overline{B_k}$ and 
$$
B_k\cdot k=0,\qquad k\times B_k=-i|k|B_k.
$$
The ``principal part'' of the perturbation $w$ consists of the map
\begin{align*}
w_o (x,t) := \sum_{\textrm{$l$ odd}, k\in \Lambda^o} \chi_l(t)w_{kl} (x,t) +
\sum_{\textrm{$l$ even}, k\in \Lambda^e} \chi_l(t)w_{kl} (x,t)\, 
\end{align*}
and the corrector $w_c$ is defined in such a way that $w:= w_o+w_c$ is divergence free:
\begin{equation*}
w_c:= \sum_{kl} \frac{\chi_l}{\lambda_{q+1}}\curl\left(ia_{kl}\phi_{kl}\frac{k\times B_k}{|k|^2}\right) e^{i\lambda_{q+1}k \cdot x}\,,
\end{equation*}
where $\phi_{kl}(x,t)=e^{i\lambda_{q+1}k\cdot[\Phi_l(x,t)-x]}$. As in Remark 1 in \cite{Buckmaster:2014ty}, we may write
$$
w=\sum_{kl}\chi_l\,L_{kl}\,e^{i\lambda_{q+1}k\cdot\Phi_l}\,.
$$
The new pressure is defined as 
\begin{equation*}
p_1:=p_0-\frac{|w_o|^2}{2} - \frac{1}{3} |w_c|^2 - \frac{2}{3} \langle w_o, w_c\rangle - \frac{2}{3} \langle v_0-v_\ell, w\rangle \,.
\end{equation*}
The new Reynolds stress term from \cite{Buckmaster:2014ty} will be
$$
\mathring{\mathcal{E}}^{(1)}:=\mathcal{R}\left[\partial_tv_1+\div(v_1\otimes v_1)+\nabla p_1+\div(R_0-S)\right],
$$
so that $\int_{\T}\mathring{\mathcal{E}}^{(1)}(x,t)\,dx=0$ for all $t$. Then, we define
$$
\mathcal{E}^{(2)}(t):=\fint_{\T}v_1\otimes v_1-v_0\otimes v_0-S\,dx
$$
and 
$$
\mathcal{E}(x,t):=\mathring{\mathcal{E}}^{(1)}(x,t)+\mathcal{E}^{(2)}(t).
$$
The assertions \eqref{e:basic:average} and \eqref{e:basic:equations} follow directly by construction. 

\smallskip

The estimates \eqref{e:basic:H-1}, \eqref{e:basic:v} and \eqref{e:basic:p} follow directly from the corresponding estimates in
\cite{Buckmaster:2014ty} and \eqref{e:basic:error} follows from the estimates for $\mathring{\mathcal{E}}^{(1)}$ in \cite{Buckmaster:2014ty} and the following two bounds for $\mathcal{E}^{(2)}$:
\begin{eqnarray}
\left|\int_{\T}v_1\otimes v_1-v_0\otimes v_0-S\,dx\right|&\leq& C\delta_{q+1}^{\sfrac34}\delta_q^{\sfrac14}\lambda_q^{\sfrac12}\lambda_{q+1}^{-\sfrac12},\label{e:basic:stressC0}\\
\left|\frac{d}{dt}\int_{\T}v_1\otimes v_1-v_0\otimes v_0-S\,dx\right|&\leq& C\delta_{q+1}^{\sfrac12}\delta_q\lambda_q,\label{e:basic:stressC1}
\end{eqnarray}
Indeed, we just need to check that the bound in \eqref{e:basic:stressC1} is better than the one claimed for $\|(\partial_t+v_1\cdot \nabla)\mathcal{E}\|_0$ in \eqref{e:basic:error}, i.e. that
$$
\delta_{q+1}^{\sfrac12}\delta_q\lambda_q\leq \delta_{q+1}^{\sfrac54}\delta_q^{\sfrac14}\lambda_q^{\sfrac12}\lambda_{q+1}^{\sfrac12}.
$$
This follows easily from \eqref{e:b-admissible}. 
 
\bigskip

{\bf Verification of \eqref{e:basic:stressC0}-\eqref{e:basic:stressC1}}

As in identity (81) in \cite{Buckmaster:2014ty}, we have
$$
w_o\otimes w_o-S=\sum_{k,l,k',l',k+k'\neq 0}\chi_{l}\chi_{l'}w_{kl}\otimes w_{k'l'}+\sum_l\chi_l^2[S_{\ell,l}-S_{\ell}]+[S_{\ell}-S].
$$
It then follows similarly to (81)-(82) in \cite{Buckmaster:2014ty} that
$$
\left|\int_{\T}w_o\otimes w_o-S\,dx\right|\leq C\frac{\delta_{q+1}\delta_q^{\sfrac12}\lambda_q}{\mu_q}+C\frac{\delta_{q+1}\lambda_q}{\lambda_{q+1}}.
$$
Using then the $H^{-1}$ estimate \eqref{e:basic:H-1}, the bounds on $\|w_o\|_0$ and $\|w_c\|_0$, and the expression for $\mu_q$ in \eqref{e:def-muqlq}, we arrive at 
\begin{equation*}
\begin{split}
\left|\int_{\T}v_1\otimes v_1-v_0\otimes v_0-S\,dx\right|&\leq C\frac{\delta_{q+1}^{\sfrac12}\delta_q^{\sfrac12}\lambda_q}{\lambda_{q+1}}+C\frac{\delta_{q+1}\delta_q^{\sfrac12}\lambda_q}{\mu_q}\\
&\leq C\delta_{q+1}^{\sfrac34}\delta_q^{\sfrac14}\lambda_q^{\sfrac12}\lambda_{q+1}^{-\sfrac12}
\left(1+\frac{\delta_q^{\sfrac14}\lambda_q^{\sfrac12}}{\delta_{q+1}^{\sfrac14}\lambda_{q+1}^{\sfrac12}}\right)\\
&\leq C\delta_{q+1}^{\sfrac34}\delta_q^{\sfrac14}\lambda_q^{\sfrac12}\lambda_{q+1}^{-\sfrac12}
\end{split}
\end{equation*}
thereby proving \eqref{e:basic:stressC0}. 

\smallskip

For evaluating the time derivative, observe first of all that, since $v_{\ell}$ is solenoidal, for any $F=F(x,t)$ 
\begin{equation}\label{e:basic:Dt}
\frac{d}{dt}\int_{\T}F\,dx=\int_{\T}D_tF\,dx,
\end{equation}
where $D_t=\partial_t+v_\ell\cdot\nabla$.
Indeed, if $X(x,t)$ denotes the flow associated to $v_{\ell}$, then $X(\cdot,t):\T\to\T$ is a diffeomorphism with $\det DX=1$ for all $t$, hence
$$
\int_{\T}F(x,t)\,dx=\int_{\T}F(X(x,t),t)\,dx.
$$
Differentiating in $t$ we arrive at \eqref{e:basic:Dt}. We apply this to 
\begin{equation*}
\begin{split}
&v_1\otimes v_1-v_0\otimes v_0-S=[w_o\otimes w_o-S]\\
&+[w_0\otimes w_c+w_c\otimes w_o+w_c\otimes w_c]+[v_0\otimes w+w\otimes v_0]
=F_1+F_2+F_3.
\end{split}
\end{equation*}
Since $D_tw_{kl}=0$, we have
$$
D_tF_1=\sum_{k,l,k',l',k+k'\neq 0}\chi_{l}'\chi_{l'}w_{kl}\otimes w_{k'l'}+D_t\sum_l\chi_l^2[S_{\ell,l}-S_{\ell}]+D_t[S_{\ell}-S].
$$
Using the argument of (81)-(82) as well as the estimates for $D_tR^5$ and $D_tR^4$ from \cite{Buckmaster:2014ty}, we obtain
$$
\left|\int_{\T}D_tF_1\,dx\right|\leq C\delta_{q+1}\delta_q^{\sfrac12}\lambda_q.
$$
Furthermore, using the estimate for $D_tR^2$ from \cite{Buckmaster:2014ty}, we deduce
$$
\|D_tF_2\|_0\leq C \delta_{q+1}\delta_q^{\sfrac12}\lambda_q.
$$
We turn to $D_tF_3$. 
$$
\left|\int_{\T}D_tw\otimes v_0\,dx\right|\leq \left|\int_{\T}D_tw\otimes v_{\ell}\,dx\right|+\left|\int_{\T}D_tw\otimes (v_0-v_{\ell})\,dx\right|,
$$
and for the second term above we can use the estimates for $\|D_tw\|_0$ and $\|v_0-v_{\ell}\|_0$ from 
 \cite{Buckmaster:2014ty} to conclude 
$$
\left|\int_{\T}D_tw\otimes (v_0-v_{\ell})\,dx\right|\leq C\delta_{q+1}^{\sfrac12}\delta_q^{\sfrac12}\lambda_q\mu_q\ell_q.
$$
Next, recall that we may write
$$
D_tw\otimes v_{\ell}=\sum_{kl}[\chi_lD_tL_{kl}+\chi_l'L_{kl}]\phi_{kl}\otimes v_\ell e^{i\lambda_{q+1}k\cdot x}=\sum_{kl}\tilde \Omega_{kl}(x,t)e^{i\lambda_{q+1}k\cdot x},
$$
and using the estimates from Lemmas 3.1 and 3.2 in  \cite{Buckmaster:2014ty}, for any $N\geq 1$
$$
[\tilde\Omega_{kl}]_N\leq C_N\delta_{q+1}^{\sfrac12}\mu_q\lambda_{q+1}^{N(1-\beta)}.
$$
%where the constant depends on $\|v_0\|_0$. 
Choosing $N\in\N$ so large that $N\beta\geq 1$ and using Proposition G.1 (i) in \cite{Buckmaster:2014ty}, we deduce 
$$
\left|\int_{\T}D_tw\otimes v_{\ell}\,dx\right|\leq C\delta_{q+1}^{\sfrac12}\mu_q\lambda_{q+1}^{-1}.
$$
On the other hand, 
\begin{equation*}
\begin{split}
D_tv_0&=\partial_tv_0+v_{\ell}\cdot\nabla v_0=\partial_tv_0+v_{0}\cdot\nabla v_0+(v_\ell-v_0)\cdot\nabla v_0\\
&=-\nabla p_0-\div R_0+(v_\ell-v_0)\cdot\nabla v_0,
\end{split}
\end{equation*}
and consequently $\|D_tv_0\|_0\leq 3M_0^2\delta_q\lambda_q$. It follows that
$$
\left|\int_{\T}w\otimes D_tv_0\,dx\right|\leq M\delta_{q+1}^{\sfrac12}\delta_q\lambda_q.
$$

Summarizing, we obtain
\begin{equation*}
\begin{split}
\biggl|\frac{d}{dt}\int_{\T}&v_1\otimes v_1-v_0\otimes v_0-S\,dx\biggr|\leq C\delta_{q+1}^{\sfrac12}\delta_q\lambda_q+C\frac{\delta_{q+1}^{\sfrac12}\mu_q}{\lambda_{q+1}}+
C\delta_{q+1}^{\sfrac12}\delta_q^{\sfrac12}\lambda_q\mu_q\ell_q\\
&=C\delta_{q+1}^{\sfrac12}\delta_q\lambda_q\left(1+\delta_{q+1}^{\sfrac14}\delta_q^{-\sfrac34}\lambda_q^{-\sfrac12}\lambda_{q+1}^{-\sfrac12}+\left(\frac{\delta_{q+1}^{\sfrac12}\lambda_q}{\delta_{q}^{\sfrac12}\lambda_{q+1}}\right)^{\sfrac14}\right)\\
&\leq C\delta_{q+1}^{\sfrac12}\delta_q\lambda_q\,.
\end{split}
\end{equation*}

Finally, the estimate \eqref{e:basic:H-12} is a consequence of the $C^0$ estimate of $\mathcal{E}$ as well as 
$$
\left|\int_{\T}[v_1\otimes v_1-v_0\otimes v_0-S]f\,dx\right|\leq C\|f\|_{C^1}\delta_{q+1}^{\sfrac34}\delta_q^{\sfrac14}\lambda_q^{\sfrac12}\lambda_{q+1}^{-\sfrac12}
$$ 
for any $f\in C^1(\T)$, whose proof is exactly as the proof of \eqref{e:basic:stressC0} above. 
\end{proof}

%%%%%%%%%%%%%%%%%%%%%%%%%%%%%%%%%%%%%%%%%%%%%%%

\section{Adapted subsolutions from strong subsolutions}\label{s:stroadapt}

In this section we show how to construct adapted subsolutions (c.f.~Definition \ref{D:adapt}) from strong subsolutions.

\begin{proof}[Proof of Proposition \ref{p:stradapt}]
We proceed by an iterative scheme based on Proposition \ref{p:basic}. 

\noindent{\bf Step 1. Definition of $(\delta_q,\lambda_q)$. }
We start by fixing various constants. Fix $\eps>0$ so that
$$
\frac{1}{5+2\eps}>\theta,
$$
and then choose $b,c>1$ so that
$$
\frac{1+4b}{2b}<c\textrm{ and }bc\leq 5/2+\eps.
$$
Next, let $\tilde\eps>0$ be sufficiently small, so that
$$
\tilde\eps b^2c<(b-1)\bigl[bc/2-b-1/4\bigr]
$$
(observe the the right hand side is positive because of the choice of $b,c$). 

\smallskip

Given $a\gg 1$ (to be chosen later), we then set, for $q=0,1,2,\dots$
\begin{equation}\label{e:adapted-deltalambda}
\delta_q:=\delta a^{b-b^q},\qquad \lambda_q\in [a^{cb^{q+1}},2a^{cb^{q+1}}]\cap\N\,.
\end{equation}
Finally, we fix $M_0>1$ so that 
$$
M_0\geq \max\{4\overline{M},\|v_0\|_0+4\delta^{\sfrac12}\},
$$ 
where $\overline{M}$ is the constant from Proposition \ref{p:basic}. Furthermore, let $C=C(b,M_0,\tilde\eps)$ and $\beta=\beta(b)$ as in Proposition \ref{p:basic}. 

It remains to choose the constant $a$. 
Recall that if $a\gg 1$ is sufficiently large, then the choice 
of $(\delta_q,\lambda_q)$ in \eqref{e:adapted-deltalambda} leads to a $b$-admissible sequence. Furthermore, we easily see that
$$
\delta_{q+2}^{-1}\delta_{q+1}^{\sfrac34}\delta_q^{\sfrac14}\lambda_q^{\sfrac12}\lambda_{q+1}^{\tilde\eps-\sfrac12}\leq \sqrt{2}a^{b^q[(b-1)((1-c/2)b+1/4)+\tilde\eps b^2c]},
$$
where the exponent is negative:
$$
(b-1)\left((1-c/2)b+1/4\right)+\tilde\eps b^2c<0
$$
by our choice of $b,c$ and $\tilde\eps$. Similarly
\begin{equation}\label{e:furtherconditions}
\frac{\delta_{q+1}\delta_q^{\sfrac12}\lambda_q}{\delta_{q+2}\delta_{q+1}^{\sfrac12}\lambda_{q+1}}\leq 2a^{b^q(b-1)(b+1/2-bc)}
\end{equation}
where once again the exponent is negative. 
Hence we can ensure, by choosing $a\gg 1$ sufficiently large, that 
\begin{equation}\label{e:constants1}
\begin{split}
C(b,M_0,\tilde\eps)\delta_{q+1}^{\sfrac34}\delta_q^{\sfrac14}\lambda_q^{\sfrac12}\lambda_{q+1}^{\tilde\eps-\sfrac12}&\leq\eta \delta_{q+2}\,,\\
C(b,M_0,\tilde\eps)\lambda_q^{-\beta}&\leq \overline{M}\,,\\
\delta_{q+1}\delta_q^{\sfrac12}\lambda_q&\leq \eta\, \delta_{q+2}\delta_{q+1}^{\sfrac12}\lambda_{q+1}\,\\
4\overline{M}\delta^{\sfrac12}\lambda_1^{-1}+2\delta_2&< \sigma
\end{split}
\end{equation} 
for all $q\in\N$, where $\eta\ll 1$ is a small constant to be specified later. Since $(v_0,p_0,R_0)\in C^{\infty}(\T\times [0,T])$, in the same way we can ensure additionally (by choosing $a\gg 1$ sufficiently large) that
\begin{equation}\label{e:startofiteration}
\begin{split}
[v_0]_1&\leq M_0\delta_0^{\sfrac12}\lambda_0\,,\\
[p_0]_1&\leq M_0^2\delta_0\lambda_0\,,\\
[R_0]_1&\leq M_0\delta_1\lambda_0\,,\\
\|(\partial_t+v_0\cdot\nabla)R_0\|_0&\leq \tfrac{1}{2}M_0\delta_1\delta_0^{\sfrac12}\lambda_0.
\end{split}
\end{equation}

Next, fix a cut-off function $\chi\in C^{\infty}[0,\infty)$ such that 
$$
\chi(t)=\begin{cases}1&t<\tfrac{1}{2}T,\\ 0 &t>\tfrac{3}{4}T,\end{cases}
$$
and set $\chi_q(t)=\chi(2^qt)$. As before, by choosing $a\gg 1$ sufficiently large, we may assume that
\begin{equation}\label{e:chiq}
|\chi'_q(t)|\leq \frac{1}{2}\delta_q^{\sfrac12}\lambda_q\qquad\textrm{ for all $q$.}
\end{equation}

\smallskip

\noindent{\bf Step 2. Inductive construction of $(v_q,p_q,R_q)$. }

Starting from $(v_0,p_0,R_0)$ and using Proposition \ref{p:basic} in $(0,T)$ we construct inductively a sequence $(v_q,p_q,R_q)$, $q\in\N$, of smooth strong subsolutions with
$$
R_q(x,t)=\rho_q(t)\Id+\mathring{R}_q(x,t)
$$
such that the following hold:
\begin{enumerate}
\item[($a_q$)] For all $t\in [0,T]$ we have
\begin{equation*}
\int_{\T}v_{q}\otimes v_{q}+R_{q}\,dx=\int_{\T}v_{0}\otimes v_{0}+R_{0}\,dx;
\end{equation*}
\item[($b_q$)] For all $t\in [0,T]$ 
\begin{equation}\label{e:RqC0-global}
\left|\mathring{R}_q(x,t)\right|\leq r_2\rho_q(t);
\end{equation}
\item[($c_q$)] If  $2^{-j}T<t\leq 2^{-j+1}T$ for some $j=1,\dots,q$, then 
\begin{equation}\label{e:rhoq-global}
\frac{3}{4}\delta_{j+1}\leq \rho_q(t)\leq \frac{3}{2}\delta_{j};
\end{equation}
\item[($d_q$)] For all $t\leq 2^{-q}T$
\begin{equation}\label{e:RqC0-local}
\left|\mathring{R}_q(x,t)\right|\leq r_3\rho_q(t),\quad \frac{3}{4}\delta_{q+1}\leq \rho_q(t)\leq \frac{5}{4}\delta_{q+1}\,;
\end{equation}
\item[($e_q$)] If $2^{-j}T<t\leq 2^{-j+1}T$ for some $j=1,\dots,q$, then
\begin{equation}\label{e:adapted-stepq}
\begin{split}
[v_q]_1&\leq M_0\delta_j^{\sfrac12}\lambda_j\,,\\
[p_q]_1&\leq M_0^2\delta_j\lambda_j\,,\\
[R_q]_1&\leq M_0\delta_{j+1}\lambda_j\,,\\
\|(\partial_t+v_q\cdot\nabla)R_q\|_0&\leq \tfrac{1}{2}M_0\delta_{j+1}\delta_j^{\sfrac12}\lambda_j,
\end{split}
\end{equation}
whereas, if $t\leq 2^{-q}T$, then \eqref{e:adapted-stepq} holds with $j=q$;
\item[($f_q$)] For all $t$
\begin{equation}\label{e:adapted-C0norm}
\|v_q\|_0\leq M_0\,.
\end{equation}
\end{enumerate}
Notice that $(v_0,p_0,R_0)$ satisfies $(a_0)$-$(f_0)$.
Suppose  $(v_q,p_q,R_q)$ is a smooth, strong subsolution on $[0,T]$ with 
$$
R_q(x,t)=\rho_q(t)\Id+\mathring{R}_q(x,t)
$$
such that the properties ($a_q$)-($f_q$) above hold. Define 
\begin{equation*}
\begin{split}
S_q(x,t)&:=\chi_q(t)\bigl[R_q(x,t)-\delta_{q+2}\Id\bigr]\\
&=\chi_q(t)\bigl[(\rho_q(t)-\delta_{q+2})\Id+\mathring{R}_q(x,t)\bigr].
\end{split}
\end{equation*}
Since $\supp\,\chi_q\subset[0,2^{-q}T)$, by ($d_q$) 
$$
\rho_q(t)\geq \frac{3}{4}\delta_{q+1}\geq \frac{3}{2}\delta_{q+2}\quad\textrm{ on }\supp\,\chi_q
$$
and therefore, using \eqref{E:r3r2r1},
$$
|\mathring S_q|=\chi_q|\mR_q|\leq \chi_qr_3\rho_q(t)\leq \chi_qr_0(\rho_q(t)-\delta_{q+2}).
$$
Consequently the tensor $S_q$ satisfies condition \eqref{e:Rstrong}. Moreover, since $|\chi_q(t)|\leq 1$,
$$
\frac{1}{3}\tr S_q(t)\leq \rho_q(t)\leq \frac{5}{4}\delta_{q+1},
$$
and $[S_q]_1\leq [R_q]_1$. Finally, using \eqref{e:chiq}, \eqref{e:RqC0-local}, \eqref{e:adapted-stepq} and \eqref{e:Rvsrho}
\begin{equation*}
\begin{split}
\|(\partial_t+v_q\cdot\nabla)S_q)\|_0&\leq \chi_q\|(\partial_t+v_q\cdot\nabla)R_q\|_0+|\chi_q'(t)|\|R_q\|_0\\
&\leq \tfrac{1}{2}M_0\delta_{q+1}\delta_q^{\sfrac12}\lambda_q+\delta_q^{\sfrac12}\lambda_{q}\rho_q(t)\mathbb 1_{\{\chi'_q\neq 0\}}\\
&\leq M_0\delta_{q+1}\delta_q^{\sfrac12}\lambda_q.
\end{split}
\end{equation*}
Hence the conditions of Proposition \ref{p:basic} are satisfied, and the proposition yields $(v_{q+1},p_{q+1})$ and $\mathcal{E}_{q+1}$ satisfying its conclusions \eqref{e:basic:average}-\eqref{e:basic:error}. Set
$$
R_{q+1}(x,t):=R_q(x,t)-S_{q}(x,t)-\mathcal{E}_{q+1}(x,t).
$$
We claim that $(v_{q+1},p_{q+1},R_{q+1})$ obtained in this way is a smooth strong subsolution satisfying ($a_{q+1}$)-($f_{q+1}$) above. First of all, it is clear by construction that 
\begin{equation}
(v_{q+1},p_{q+1},R_{q+1})=(v_q,p_q,R_q)\textrm{ for $t\geq 2^{-q}T$}
\end{equation}
and that, for all $t$
\begin{equation}
\int_{\T}v_{q+1}\otimes v_{q+1}+R_{q+1}\,dx=\int_{\T}v_{q}\otimes v_{q}+R_{q}\,dx
\end{equation}
and
\begin{equation*}
\begin{split}
\partial_tv_{q+1}+v_{q+1}\cdot\nabla v_{q+1}+p_{q+1}&=-\div R_{q+1},\\
\div v_{q+1}&=0.
\end{split}
\end{equation*}
In particular $(v_{q+1},p_{q+1},R_{q+1})$ is a smooth, strong subsolution of $[0,T]$ such that ($a_{q+1}$) holds, and in order to verify ($b_{q+1}$)-($f_{q+1}$) it suffices to check:
\begin{itemize}
\item \eqref{e:RqC0-global} and \eqref{e:rhoq-global} for $2^{-(q+1)}T\leq t\leq 2^{-q}T$ and $j=q+1$;
\item \eqref{e:RqC0-local} for $t\leq 2^{-(q+1)}T$;
\item \eqref{e:adapted-stepq} with $j=q+1$ for $t\leq 2^{-q}T$;
\item \eqref{e:adapted-C0norm} for $t\leq 2^{-q}T$.
\end{itemize}

From Proposition \ref{p:basic} and \eqref{e:constants1} we obtain
\begin{equation}
\|\mathcal{E}_{q+1}\|_0+\frac{1}{\lambda_{q+1}}[\mathcal{E}_{q+1}]_1+\frac{1}{\delta_{q+1}^{\sfrac12}\lambda_{q+1}}\|(\partial_t+v_q\cdot\nabla)\mathcal{E}_{q+1}\|_0\leq \eta \delta_{q+2}.
\end{equation}
Furthermore
$$
R_{q+1}(x,t)=\rho_{q+1}(t)\Id+\mathring{R}_{q+1}(x,t),
$$
where
\begin{equation*}
\begin{split}
\rho_{q+1}(t)&=(1-\chi_q(t))\rho_q(t)+\chi_q(t)\delta_{q+2}+\tilde\rho_{q+1}(t),\\
\mathring{R}_{q+1}(x,t)&=(1-\chi_q(t))\mathring{R}_q+\mathring{\mathcal{E}}_{q+1}(x,t),
\end{split}
\end{equation*}
and
$$
\tilde\rho_{q+1}(t)=\frac{1}{3}\tr\mathcal{E}_{q+1}(t)\quad\textrm{ and }\quad \mathcal{E}_{q+1}(x,t)=\tilde\rho_{q+1}(t)\Id+\mathring{\mathcal{E}}_{q+1}(x,t).
$$
Suppose $2^{-(q+1)}T\leq t\leq 2^{-q}T$. Then, using ($d_q$) 
$$
\bigl|\mathring{R}_{q+1}\bigr|\leq (1-\chi_q)\bigl|\mathring{R}_q\bigr|+\eta\delta_{q+2}\leq (1-\chi_q)r_3\rho_q+\eta\delta_{q+2}
$$
and
$$
\rho_{q+1}(t)\geq (1-\chi_q(t))\rho_q(t)+\chi_q(t)\delta_{q+2}-\eta\delta_{q+2}
$$
Since $0\leq \chi_q(t)\leq 1$, it follows 
\begin{equation}\label{e:basic:ineq1}
(1-\chi_q(t))r_3\rho_q(t)+\eta\delta_{q+2}\leq r_2[(1-\chi_q(t))\rho_q(t)+\chi_q(t)\delta_{q+2}-\eta\delta_{q+2}]\,,
\end{equation}
provided 
\begin{equation*}
\eta\leq \frac{r_2}{1+r_2}\quad\textrm{ and }\quad \eta\leq\frac32\frac{(r_2-r_3)}{1+r_2}.
\end{equation*}
Therefore, by choosing $\eta$ sufficiently small (depending only on $r_3,r_2$) we achieve \eqref{e:basic:ineq1}, from which
\eqref{e:RqC0-global} follows. Similarly, we estimate (recall, $2^{-(q+1)}T\leq t\leq 2^{-q}T$)
$$
(1-\eta)\delta_{q+2}\leq \rho_{q+1}(t)\leq \frac{5}{4}\delta_{q+1}+\eta\delta_{q+2}.
$$
By choosing $\eta<1/4$ we then achieve \eqref{e:rhoq-global} for $j=q+1$.

\smallskip

Next, let $t\leq 2^{-(q+1)}T$. Then $\chi_{q}(t)=1$, and hence $\rho_{q+1}=\delta_{q+2}+\tilde\rho_{q+1}$. We deduce
$$
\frac{3}{4}\delta_{q+2}\leq \rho_{q+1}\leq \frac{5}{4}\delta_{q+2}
$$
provided $\eta<1/4$. Since also $|\mathring{R}_{q+1}|\leq \eta\delta_{q+2}$, ($d_{q+1}$) follows by choosing $\eta$ sufficiently small (depending only on $r_3$). 

\smallskip

Now let us look at the estimates for $v_{q+1},p_{q+1}$ and $R_{q+1}$ for $t\leq 2^{-q}T$. Using 
\eqref{e:basic:v}, ($e_q$) and \eqref{e:constants1} we have
\begin{equation*}
\begin{split}
[v_{q+1}]_1&\leq [v_q]_1+[v_{q+1}-v_q]_1\leq M_0\delta_q^{\sfrac12}\lambda_q+\frac{M_0}{2}\delta_{q+1}^{\sfrac12}\lambda_{q+1}\\
&\leq M_0\delta_{q+1}^{\sfrac12}\lambda_{q+1}\,.
\end{split}
\end{equation*}
Similarly,
$$
[p_{q+1}]_1\leq M_0^2\delta_{q+1}\lambda_{q+1}\quad\textrm{ and }\quad [R_{q+1}]_1\leq M_0\delta_{q+2}\lambda_{q+1}
$$
using \eqref{e:basic:p}-\eqref{e:basic:error}, ($e_q$), \eqref{e:constants1} and \eqref{e:b-admissible}. Finally, using \eqref{e:basic:v}, \eqref{e:basic:p}, \eqref{e:adapted-stepq} and \eqref{e:constants1} 
\begin{equation*}
\begin{split}
\|(\partial_t+v_{q+1}&\cdot\nabla)R_{q+1}\|_0\leq \|(\partial_t+v_{q}\cdot\nabla)R_{q}\|_0+\|(\partial_t+v_{q}\cdot\nabla)S_q\|_0+\\
&+\|v_{q+1}-v_q\|_0\left([R_{q}]_1+[S_q]_1\right)+\|(\partial_t+v_{q+1}\cdot\nabla)\mathcal{E}_{q+1}\|_0\\
&\quad \leq 2M_0(1+2\overline{M})\delta_{q+1}\delta_q^{\sfrac12}\lambda_q+\eta\delta_{q+2}\delta_{q+1}^{\sfrac12}\lambda_{q+1}\\
&\quad \leq 3\eta M_0 (\overline{M}+1)\delta_{q+2}\delta_{q+1}^{\sfrac12}\lambda_{q+1}\\
&\quad \leq \tfrac{1}{2}M_0\delta_{q+2}\delta_{q+1}^{\sfrac12}\lambda_{q+1}
\end{split}
\end{equation*}
as required, provided $\eta$ is sufficiently small (depending only on $\overline{M}$). Therefore \eqref{e:adapted-stepq} holds with $j=q+1$. 

\smallskip

Finally, concerning the $C^0$ norm observe that the sequence $v_0,v_1,\dots,v_{q+1}$ that we defined inductively
also satisfies
\begin{equation}\label{e:c0estimates}
\begin{split}
\|v_{j+1}-v_j\|_{H^{-1}}\leq \overline{M}&\delta_{j+1}^{\sfrac12}\lambda_{j+1}^{-1},\quad \|v_{j+1}-v_j\|_0\leq 2\overline{M}\delta_{j+1}^{\sfrac12},\\ 
&\|p_{j+1}-p_j\|_0\leq 4\overline{M}^2\delta_{j+1}
\end{split}
\end{equation}
for all $j\leq q$. Moreover \eqref{e:b-admissible} implies $\delta_{q+1}\leq 2^{-q}\delta_1=2^{-q}\delta$, hence
$$
\sum_{q=0}^\infty\delta_{q+1}^{\sfrac12}\leq 4\delta^{\sfrac12}.
$$
It follows that 
$$
\|v_{q+1}\|_0\leq \|v_0\|_0+\sum_{j=0}^q\|v_{j+1}-v_j\|_0\leq \|v_0\|_0+4\delta^{\sfrac12}\leq M_0.
$$
This concludes the induction step.

\smallskip

\noindent{\bf Step 3. Convergence and conclusion.}

Overall we have shown that $(v_{q+1},p_{q+1},R_{q+1})$ satisfies ($a_{q+1}$)-($f_{q+1}$). 

The estimates \eqref{e:c0estimates} show that $\{v_q\}$ and $\{p_q\}$ are Cauchy sequences in $C^0$. 
Similarly, from the definition of $R_{q+1}$ and the inductive estimates we deduce
$$
\|R_{q+1}-R_{q}\|_0\leq \|S_q\|_0+\|\mathcal{E}_{q+1}\|_0\leq C\delta_{q+1},
$$
hence also $\{R_q\}$ is a Cauchy sequence. 

Furthermore, for each $t>0$ there exists $q_0=q_0(t)$ so that 
$$
(v_{q+1}(\cdot,t),p_{q+1}(\cdot,t),R_{q+1}(\cdot,t))=(v_{q}(\cdot,t),p_{q}(\cdot,t),R_{q}(\cdot,t))
$$ 
for all $q\geq q_0$. Consequently
$$
v_q\to\bar{v},\quad p_q\to\bar{p},\quad R_q\to\bar{R}\quad\textrm{ uniformly, }
$$
where $(\bar{v},\bar{p},\bar{R})\in C^{\infty}(\T\times (0,T])\cap C(\T\times [0,T])$ is a strong subsolution with
$$
|\mathring{\overline{R}}(x,t)|\leq r_2\overline{\rho}(t)
$$
such that
$$
\int_{\T}\bar{v}\otimes \bar{v}+\bar{R}\,dx=\int_{\T}v_0\otimes v_0+R_0\,dx\quad\textrm{ for all }t
$$
and, using once more \eqref{e:c0estimates} and \eqref{e:constants1},
\begin{equation*}
\begin{split}
\|\bar{v}-v_0\|_{H^{-1}}&\leq \sum_{q=0}^\infty \|v_{q+1}-v_q\|_{H^{-1}}\\
&\leq \sum_{q=0}^\infty \overline{M}\delta_{q+1}^{\sfrac12}\lambda_{q+1}^{-1}\leq 4\overline{M}\delta^{\sfrac	12}\lambda_1^{-1}< \sigma.
\end{split}
\end{equation*}
Similarly, recalling \eqref{e:basic:H-12} and \eqref{e:constants1} we obtain
\begin{equation*}
\begin{split}
\|\bar{v}\otimes\bar{v}-v_0\otimes v_0-R_0\|_{H^{-1}}&\leq \sum_{q=0}^\infty \|v_{q+1}\otimes v_{q+1}+R_{q+1}-v_q\otimes v_q-R_q\|_{H^{-1}}\\
&= \sum_{q=0}^\infty \|v_{q+1}\otimes v_{q+1}-v_q\otimes v_q-S_q-\mathcal{E}_{q+1}\|_{H^{-1}}\\
&\leq \sum_{q=0}^\infty \eta \delta_{q+2}\leq 2\delta_2< \sigma.
\end{split}
\end{equation*}

Concerning the initial datum, as a consequence of $(e_q)$ and \eqref{e:c0estimates} we have in particular
\begin{equation*}
\begin{split}
	[v_{q+1}(\cdot,0)-v_q(\cdot,0)]_{C^\theta}&\leq M_0\delta_{q+1}^{1/2}\lambda_{q+1}^{\theta}\\
	&\leq Ca^{b^{q+1}(\theta bc-1/2)}
\end{split}
\end{equation*}
for some constant $C$. 
By our choice of $\eps>0$ and $b,c>1$ we have
$$
bc\leq 5/2+\eps <\frac{1}{2\theta},
$$
hence the exponent in the above estimate is negative. Therefore, in the limit we have $\bar{v}(\cdot, 0)\in C^{\theta}(\T)$. Similarly we deduce  $\bar{p}(\cdot, 0)\in C^{2\theta}(\T)$ and, from $(d_q)$ we obtain $\bar{R}(\cdot,0)=0$. 

\medskip

It remains to verify conditions \eqref{e:adapted-est} for any $t>0$. To this end let $t\in [2^{-q}T,2^{-q+1}T]$ for some $q=0,1,2\dots.$. By our construction we have
$$
(\bar{v}(\cdot,t),\bar{p}(\cdot,t),\bar{R}(\cdot,t))=(v_{q}(\cdot,t),p_{q}(\cdot,t),R_{q}(\cdot,t)).
$$ 
Therefore, using $(c_q)$ and $(e_q)$, $\bar{\rho}(t)\leq 3/2\delta_q\leq Ca^{-b^q}$ and consequently
\begin{equation*}
\begin{split}
[\bar{v}(t)]_1&\leq Ca^{b^q(bc-1/2)}\leq M\bar{\rho}(t)^{-(bc-1/2)}\,,\\
[\bar{p}(t)]_1&\leq Ca^{b^q(bc-1)}\leq M\bar{\rho}(t)^{-(bc-1)}\,,\\
[\bar{R}(t)]_1&\leq Ca^{b^q(bc-b)}\leq M\bar{\rho}(t)^{-(bc-b)}\,,\\
\|(\partial_t+\bar{v}\cdot\nabla)\bar{R}(t)\|_0& \leq Ca^{b^q(bc-b-1/2)}\leq M\bar{\rho}(t)^{-(bc-b-1/2)},
\end{split}
\end{equation*}
for some constants $C,M$ depending only on $M_0$ and $a$. From here \eqref{e:adapted-est} follows by observing that, due to our choice of $b,c>1$, 
\begin{equation*}
\begin{split}
	bc-1/2&\leq 2+\eps,\, bc-1\leq 3/2+\eps\\
	bc-b&\leq 3/2+\eps,\, bc-b-1/2\leq 1+\eps.
\end{split}	
\end{equation*}
This concludes the proof.
\end{proof}

\section{Solutions from adapted subsolutions}\label{s:adaptsol}

In this section we show how to construct solutions from adapted subsolutions.

\begin{proof}[Proof of Proposition \ref{P:adaptsol}] 
As in the proof of Proposition \ref{p:stradapt}, we proceed by first defining an appropriate sequence $(\delta_q,\lambda_q)$.

\noindent{\bf Step 1. Definition of $(\delta_q,\lambda_q)$. }
Let $M>1$ and $\eps>0$ be the constants from Definition \ref{D:adapt}. Choose $b>1$ so that
$$
(2+\eps)b^2+\frac{1}{2}<\frac{1}{2\theta}.
$$
Since $\tfrac{5}{2}+\eps<\frac{1}{2\theta}$, such a choice is possible. Then, choose $c>1$ so that 
\begin{equation}\label{e:sol-bc}
(2+\eps)b^2+\frac{1}{2}<bc<\frac{1}{2\theta}.
\end{equation}
We note that, since $b>1$, \eqref{e:sol-bc} implies that 
$1+4b<2bc$. Next, let $\tilde\eps>0$ be sufficiently small so that
$$
\tilde\eps b^2c<(b-1)\bigl[bc/2-b-1/4\bigr],
$$
as in the proof of Proposition \ref{p:stradapt}. Given $a\gg 1$ (to be chosen later), we then set, for $q=0,1,2,\dots$
\begin{equation}\label{e:solution-deltalambda}
\delta_q:=\delta a^{b-b^q},\qquad \lambda_q\in [a^{cb^{q+1}},2a^{cb^{q+1}}]\cap\N\,,
\end{equation}
where
$$
\delta:=\max_{t\in[0,T]}\rho_0(t).
$$
As in the proof of Proposition \ref{p:stradapt}, the constant $a\gg 1$ will be chosen sufficiently large in such a way as to 
satisfy a number of criteria. However, before we discuss these criteria, we need to set $M_0$ in such a way that the following holds: 
if $\bar\rho(t)\geq \tfrac{3}{2}\delta_{q+2}$ for some $t>0$ and some $q=0,1,2,\dots$, then 
\begin{equation}\label{e:solution-adapted-stepq-2}
\begin{split}
	[\bar v(t)]_1&\leq M_0\delta_q^{\sfrac12}\lambda_q\,,\\
	[\bar p(t)]_1&\leq M_0^2\delta_q\lambda_q\,,\\
	[\bar R(t)]_1&\leq M_0\delta_{q+1}\lambda_q\,,\\
	\|(\partial_t+\bar v\cdot\nabla)\bar R(t)\|_0&\leq \tfrac{1}{16}M_0\delta_{q+1}\delta_q^{\sfrac12}\lambda_q\,.
\end{split}
\end{equation}
To show that such a choice of $M_0$ (which may depend on $M$ and $\delta$ but not on $a$)  is possible, note that, if 
$\bar\rho(t)\geq \tfrac32\delta_{q+2}$, then from \eqref{e:adapted-est} 
\begin{equation*}
	\begin{split}
		[\bar v(t)]_1&\leq M\bar\rho^{-(2+\eps)}\leq M (\delta a^b)^{-(2+\eps)}a^{(2+\eps)b^{q+2}}\\
			            &=M (\delta a^b)^{-(\sfrac52+\eps)} (\delta a^b)^{\sfrac12}a^{(2+\eps)b^{q+2}}\\
			            &\leq M\delta^{-(\sfrac52+\eps)} (\delta a^b)^{\sfrac12}a^{b^q(bc-\sfrac12)}\\
			            &\leq M\delta^{-(\sfrac52+\eps)}\delta_q^{\sfrac12}\lambda_q,
	\end{split}
\end{equation*}
where in the third line we have used \eqref{e:sol-bc} and that $a^b>1$.
Similar calculations involving the norms $[\bar p]_1$, $[\bar R]_1$ and $\|(\partial_t+\bar v\cdot\nabla)\bar R\|_0$ together with the inequalities
$$
(3/2+\eps)b^2\leq bc-b\leq bc-1\textrm{ and }(1+\eps)b^2\leq bc-b-1/2
$$
lead to the analogous conclusions. Accordingly, we fix $M_0>1$ so that \eqref{e:solution-adapted-stepq-2} holds and moreover
$$
M_0\geq \max\{4\overline{M},\|v_0\|_0+4\delta^{\sfrac12}\},
$$ 
where $\overline{M}$ is the constant from Proposition \ref{p:basic}. 
Finally, let $C=C(b,M_0,\tilde\eps)$ and $\beta=\beta(b)$ as in Proposition \ref{p:basic}. 

To choose $a\gg 1$, observe first of all that a sufficiently large choice guarantees that the choice 
of $(\delta_q,\lambda_q)$ in \eqref{e:solution-deltalambda} leads to a $b$-admissible sequence. Furthermore, by using the same calculations as in step 1 of the proof of Proposition \ref{p:stradapt}, we can ensure by choosing $a\gg 1$ sufficiently large, that 
\begin{equation}\label{e:constants2}
\begin{split}
C(b,M_0,\tilde\eps)\delta_{q+1}^{\sfrac34}\delta_q^{\sfrac14}\lambda_q^{\sfrac12}\lambda_{q+1}^{\tilde\eps-\sfrac12}&\leq\eta \delta_{q+2}\,,\\
C(b,M_0,\tilde\eps)\lambda_q^{-\beta}&\leq \overline{M}\,,\\
\delta_{q+1}\delta_q^{\sfrac12}\lambda_q&\leq \eta\, \delta_{q+2}\delta_{q+1}^{\sfrac12}\lambda_{q+1}\,\\
4\overline{M}\delta^{\sfrac12}\lambda_1^{-1}+2\delta_2&< \sigma\,\\
\delta_{q+1}&\leq \tfrac{1}{4}\delta_q
\end{split}
\end{equation} 
for all $q\in\N$, where $\eta\ll 1$ is a small constant to be specified later.
 
\bigskip

\noindent{\bf Step 2. Inductive construction of $(v_q,p_q,R_q)$. } 

\smallskip

Using Proposition \ref{p:basic} in $(0,T)$ we construct inductively the sequence $(v_q,p_q,R_q)\in C^{\infty}(\T\times(0,T])\cap C(\T\times[0,T])$ of strong subsolutions with 
$$
R_q(x,t)=\rho_q(t)\Id+\mathring{R}_q(x,t)
$$
such that the following hold:
\begin{enumerate}
\item[($a_q$)] For all $t\in [0,T]$
	\begin{equation}
		\int_{\T}v_{q}\otimes v_{q}+R_{q}\,dx=\int_{\T}v_{0}\otimes v_{0}+R_{0}\,dx;
	\end{equation}
\item[($b_q$)] For all $t\in [0,T]$ 
	\begin{equation}\label{e:RqC0-global-2}
		\left|\mathring{R}_q(x,t)\right|\leq r_1\rho_q(t);
	\end{equation}
\item[($c_q$)] For all $t\in [0,T]$
	\begin{equation}\label{e:rhoq-global-2}
		\rho_q(t)\leq 4\delta_{q+1};
	\end{equation}
\item[($d_q$)] If $\rho_q(t)\leq 2\delta_{q+2}$, then 
	\begin{equation}\label{e:RqC0-local-2}
		\left|\mathring{R}_q(x,t)\right|\leq r_2\rho_q(t);
	\end{equation}
\item[($e_q$)] If $\rho_q(t)\geq\tfrac{3}{2}\delta_{j+2}$ for some $j\geq q$, then
	\begin{equation}\label{e:adapted-stepq-2}
	\begin{split}
		[v_q(t)]_1&\leq M_0\delta_j^{\sfrac12}\lambda_j\,,\\
		[p_q(t)]_1&\leq M_0^2\delta_j\lambda_j\,,\\
		[R_q(t)]_1&\leq M_0\delta_{j+1}\lambda_j\,,\\
		\|(\partial_t+v_q\cdot\nabla)R_q(t)\|_0&\leq \tfrac{1}{16}M_0\delta_{j+1}\delta_j^{\sfrac12}\lambda_j;
	\end{split}
	\end{equation}
\item[($f_q$)] For all $t$
	\begin{equation}\label{e:adapted-C0norm-2}
		\|v_q\|_0\leq M_0\,.
	\end{equation}
\end{enumerate}
We set $(v_0,p_0,R_0):=(\bar v,\bar p,\bar R)$. Observe that because of Definition \ref{D:adapt} and due to our choice of $(\delta_q,\lambda_q) $ and $M_0$, $(v_0,p_0,R_0)$ satisfies the above assumptions $(a_0)$-$(f_0)$.

\bigskip
	
Suppose $(v_q,p_q,R_q)$ with 
$$
R_q(x,t)=\rho_q(t)\Id+\mathring{R}_q(x,t)
$$
satisfies ($a_q$)-($f_q$) above. Let 
$$
J_q:=\{t\in [0,T]:\,\rho_q(t)>\tfrac{3}{2}\delta_{q+2}\},\quad K_q:=\{t\in[0,T]:\,\rho_q(t)\geq 2\delta_{q+2}\}
$$
and let $\chi_q\in C_c^{\infty}(J_q)$ be such that 
$$
0\leq \chi_q(t)\leq 1\textrm{ for all $t$ and }\chi_q(t)=1\textrm{ for $t\in K_q$.}
$$
Observe that, if $t_0\in K_q$ and $t\in J_q$, then, using ($e_q$),  
$$
\rho_q(t)\geq \rho_q(t_0)-|t-t_0|\sup_{J_q}|\rho_q'|\geq 2\delta_{q+2}-\tfrac{1}{16}M_0\delta_{q+1}\delta_q^{\sfrac12}\lambda_q|t-t_0|.
$$
Consequently, for any $t_0\in K_q$ the open interval $(t_0-\tfrac{1}{\mu_q},t_0+\tfrac{1}{\mu_q})$ is contained in $J_q$, where
$$
\mu_q:=\frac{M_0}{8}\frac{\delta_{q+1}\delta_q^{\sfrac12}\lambda_q}{\delta_{q+2}}.
$$
Therefore we may choose the cut-off function $\chi_q$ in addition so that
\begin{equation}\label{e:chiq2}
|\chi_q'(t)|\leq \frac{3}{2}\mu_q=\frac{3}{16}M_0\frac{\delta_{q+1}}{\delta_{q+2}}\delta_q^{\sfrac12}\lambda_q.
\end{equation}
	
\smallskip
	
Define 
\begin{equation*}
\begin{split}
S_q(x,t)&:=\chi_q(t)\bigl[R_q(x,t)-\delta_{q+2}\Id\bigr]\\
&=\chi_q(t)\bigl[(\rho_q(t)-\delta_{q+2})\Id+\mathring{R}_q(x,t)\bigr].
\end{split}
\end{equation*}
By ($b_q$)
$$
|\mathring S_q|=\chi_q|\mR_q|\leq\chi_q r_1\rho_q,
$$
and since $r_1\leq\tfrac{1}{4}r_0$ and $\delta_{q+2}\leq \tfrac{2}{3}\rho_q(t)$ for $t\in \mathrm{supp }\chi_q\subset J_q$, 
$$
\chi_q r_1\rho_q\leq r_0\chi_q(\rho_q-\delta_{q+2}).
$$
Therefore the tensor $S_q$ satisfies condition \eqref{e:Rstrong}. Moreover, since $|\chi_q(t)|\leq 1$, using ($c_q$)
$$
\frac{1}{3}\tr S_q(t)\leq \rho_q(t)\leq 4\delta_{q+1},
$$
and $[S_q]_1\leq [R_q]_1$. Finally, using \eqref{e:chiq2}, \eqref{e:adapted-stepq-2} and  \eqref{e:Rvsrho} 
\begin{equation*}
	\begin{split}
	\|(\partial_t+v_q\cdot\nabla)S_q)\|_0&\leq \chi_q(t)\|(\partial_t+v_q\cdot\nabla)R_q\|_0+|\chi_q'(t)|\|R_q\|_0\\
	&\leq \frac{1}{16}M_0\delta_{q+1}\delta_q^{\sfrac12}\lambda_q+\frac{3}{8}M_0\frac{\delta_{q+1}}{\delta_{q+2}}\delta_q^{\sfrac12}\lambda_q\rho_q(t)\mathbb 1_{\{\chi'_q\neq 0\}}\\
	&\leq M_0\delta_{q+1}\delta_q^{\sfrac12}\lambda_q,
	\end{split}
\end{equation*}
where we have used in the last line that $\supp\,\chi_q'\subset J_q\setminus K_q$, so that $\rho_q(t)<2\delta_{q+2}$.
Hence the conditions of Proposition \ref{p:basic} are satisfied, and the proposition yields $(v_{q+1},p_{q+1})$ and $\mathcal{E}_{q+1}$ satisfying its conclusions \eqref{e:basic:average}-\eqref{e:basic:error}. Set
$$
R_{q+1}(x,t):=R_q(x,t)-S_{q}(x,t)-\mathcal{E}_{q+1}(x,t).
$$
We claim that $(v_{q+1},p_{q+1},R_{q+1})$ obtained in this way is a strong subsolution satisfying ($a_{q+1}$)-($f_{q+1}$) above. First
of all, it is clear by construction that 
\begin{equation}
	(v_{q+1},p_{q+1},R_{q+1})=(v_q,p_q,R_q)\textrm{ for $t\notin J_q$}
\end{equation}
and that, for all $t$
\begin{equation}
	\int_{\T}v_{q+1}\otimes v_{q+1}+R_{q+1}\,dx=\int_{\T}v_{q}\otimes v_{q}+R_{q}\,dx
\end{equation}
and
\begin{equation*}
\begin{split}
	\partial_tv_{q+1}+v_{q+1}\cdot\nabla v_{q+1}+p_{q+1}&=-\div R_{q+1},\\
	\div v_{q+1}&=0.
\end{split}
\end{equation*}
In particular $(v_{q+1},p_{q+1},R_{q+1})\in C^{\infty}(\T\times(0,T])\cap C(\T\times[0,T])$ is a strong subsolution such that ($a_{q+1}$) holds, and in order to verify ($b_{q+1}$)-($f_{q+1}$) it suffices to check:
\begin{itemize}
	\item \eqref{e:RqC0-global-2} and  \eqref{e:rhoq-global-2} for all $t\in J_q$;
	\item \eqref{e:RqC0-local-2} if $\rho_{q+1}(t)\leq 2\delta_{q+3}$;
	\item if $\rho_{q+1}(t)\geq\tfrac32\delta_{j+2}$ for some $j\geq q+1$, then \eqref{e:adapted-stepq-2} holds with $q$ replaced by $q+1$;
	\item \eqref{e:adapted-C0norm-2} for $v_{q+1}$ and all $t\in[0,T]$.
\end{itemize}
	
From Proposition \ref{p:basic} and \eqref{e:constants2} we obtain
\begin{equation*}
	\|\mathcal{E}_{q+1}\|_0+\frac{1}{\lambda_{q+1}}[\mathcal{E}_{q+1}]_1+\frac{1}{\delta_{q+1}^{\sfrac12}\lambda_{q+1}}\|(\partial_t+v_q\cdot\nabla)\mathcal{E}_{q+1}\|_0\leq \eta \delta_{q+2}.
\end{equation*}
Next, we have
$$
R_{q+1}(x,t)=\rho_{q+1}(t)\Id+\mathring{R}_{q+1}(x,t),
$$
where
\begin{equation*}
\begin{split}
	\rho_{q+1}(t)&=(1-\chi_q(t))\rho_q(t)+\chi_q(t)\delta_{q+2}+\tilde\rho_{q+1}(t),\\
	\mathring{R}_{q+1}(x,t)&=(1-\chi_q(t))\mathring{R}_q+\mathring{\mathcal{E}}_{q+1}(x,t),
\end{split}
\end{equation*}
and
$$
\tilde\rho_{q+1}(t)=\frac{1}{3}\tr\mathcal{E}_{q+1}(t)\quad\textrm{ and }\quad \mathcal{E}_{q+1}(x,t)=\tilde\rho_{q+1}(t)\Id+ \mathring{\mathcal{E}}_{q+1}(x,t).
$$
Then,
$$
\bigl|\mathring{R}_{q+1}(x,t)\bigr|\leq (1-\chi_q)\bigl|\mathring{R}_q(x,t)\bigr|+\eta\delta_{q+2}
$$
and
$$
\rho_{q+1}(t)\geq (1-\chi_q(t))\rho_q(t)+\chi_q(t)\delta_{q+2}-\eta\delta_{q+2}.
$$
If $t\in K_q$, then $\chi_q(t)=1$ and hence
$$
\bigl|\mathring{R}_{q+1}(x,t)\bigr|\leq \eta\delta_{q+2}\leq r_1(1-\eta)\delta_{q+2}\leq r_1\rho_{q+1}(t),
$$
provided $\eta>0$ is sufficiently small (depending on $r_1$). On the other hand, if $t\in J_q\setminus K_q$, then by $(d_q)$
\eqref{e:RqC0-local-2} holds and hence
$$
\bigl|\mathring{R}_{q+1}(x,t)\bigr|\leq r_2(1-\chi_q(t))\rho_q(t)+\eta\delta_{q+2}.
$$
Moreover, $\rho_q(t)>\tfrac{3}{2}\delta_{q+2}$ and  
\begin{equation}\label{e:adapt-sol-r2r1}
(1-\chi_q(t))r_2\rho_q(t)+\eta\delta_{q+2}\leq  r_1\bigl[(1-\chi_q(t))\rho_q(t)+\chi_q(t)\delta_{q+2}-\eta\delta_{q+2}\bigr]	
\end{equation}
provided
\begin{equation*}
\eta\leq \frac{r_1}{1+r_1}\quad\textrm{ and }\quad \eta\leq \frac{3}{2}\frac{r_1-r_2}{1+r_1}.
\end{equation*}
Therefore, by choosing $\eta$ sufficiently small (depending on $r_1,r_2$) we can ensure \eqref{e:adapt-sol-r2r1}, from which it follows that
$$
\bigl|\mathring{R}_{q+1}(x,t)\bigr|\leq r_1\rho_{q+1}(t).
$$
This concludes \eqref{e:RqC0-global-2}.
	
Similarly, if $t\in K_q$, we estimate
$$
\rho_{q+1}(t)\leq (1+\eta)\delta_{q+2}
$$
whereas, if $t\in J_q\setminus K_q$, then $\rho_q(t)\leq 2\delta_{q+2}$ and hence
$$
\rho_{q+1}(t)\leq  (1-\chi_q(t))\rho_q+\chi_q(t)\delta_{q+2}+\eta\delta_{q+2}\leq (2+\eta)\delta_{q+2}\leq 4\delta_{q+2},
$$
provided $\eta\leq 2$. Thus \eqref{e:rhoq-global-2} is proved as well.

Next, observe that 
\begin{eqnarray}
\rho_{q+1}(t)&\geq & (1-\chi_q(t))\rho_q(t)+\chi_q(t)\delta_{q+2}-\eta\delta_{q+2}\notag\\
&\geq &(1-\eta)\delta_{q+2}\geq \frac{3}{4}\delta_{q+2}\quad\textrm{ for }t\in J_q\label{e:sol-newrho},	
\end{eqnarray}
provided $\eta<1/4$. Since $\delta_{q+3}\leq \tfrac{1}{4}\delta_{q+2}$ (see \eqref{e:constants2}), it follows that $\rho_{q+1}(t)\geq 3\delta_{q+3}$ for all $t\in J_q$ and therefore $(d_{q+1})$ automatically follows from $(d_q)$. 

Now let us look at the estimates for $v_{q+1},p_{q+1}$ and $R_{q+1}$. From the above estimates we have that
$$
\rho_{q+1}(t)\geq \tfrac{3}{2}\delta_{q+3} \textrm{ for all }t\in J_q,
$$
whereas recall that $(v_{q+1},p_{q+1},R_{q+1})=(v_q,p_q,R_q)$ if $t\notin J_q$. Therefore it suffices to verify \eqref{e:adapted-stepq-2}
for $j=q+1$. 	
	
Using \eqref{e:basic:v}, ($e_q$) and \eqref{e:constants2} we have
\begin{equation*}
\begin{split}
	[v_{q+1}]_1&\leq [v_q]_1+[v_{q+1}-v_q]_1\leq M_0\delta_q^{\sfrac12}\lambda_q+\frac{M_0}{2}\delta_{q+1}^{\sfrac12}\lambda_{q+1}\\
	&\leq M_0\delta_{q+1}^{\sfrac12}\lambda_{q+1}\,.
\end{split}
\end{equation*}
Similarly,
$$
[p_{q+1}]_1\leq M_0^2\delta_{q+1}\lambda_{q+1}\quad\textrm{ and }\quad [R_{q+1}]_1\leq M_0\delta_{q+2}\lambda_{q+1}
$$
using \eqref{e:basic:p}-\eqref{e:basic:error}, ($e_q$), \eqref{e:constants2} and \eqref{e:b-admissible}. Finally, 
\begin{equation*}
\begin{split}
\|(\partial_t+v_{q+1}&\cdot\nabla)R_{q+1}\|_0\leq \|(\partial_t+v_{q}\cdot\nabla)R_{q}\|_0+\|(\partial_t+v_{q}\cdot\nabla)S_q\|_0+\\
&+\|v_{q+1}-v_q\|_0\left([R_{q}]_1+[S_q]_1\right)+\|(\partial_t+v_{q+1}\cdot\nabla)\mathcal{E}_{q+1}\|_0\\
&\quad \leq 2M_0(1+2\overline{M})\delta_{q+1}\delta_q^{\sfrac12}\lambda_q+\eta\delta_{q+2}\delta_{q+1}^{\sfrac12}\lambda_{q+1}\\
&\quad \leq 3\eta M_0 (\overline{M}+1)\delta_{q+2}\delta_{q+1}^{\sfrac12}\lambda_{q+1}\\
&\quad \leq \tfrac{1}{16}M_0\delta_{q+2}\delta_{q+1}^{\sfrac12}\lambda_{q+1}
\end{split}
\end{equation*}
as required, provided $\eta>0$ is sufficiently small (depending only on $\overline{M}$). This proves $(e_{q+1})$.

\bigskip

Finally, concerning the $C^0$ norm observe that the sequence $v_0,v_1,\dots,v_{q+1}$ that we defined inductively also satisfies
\begin{equation}\label{e:c0estimates2}
\begin{split}
\|v_{j+1}-v_j\|_{H^{-1}}\leq \overline{M}&\delta_{j+1}^{\sfrac12}\lambda_{j+1}^{-1},\quad \|v_{j+1}-v_j\|_0\leq 2\overline{M}\delta_{j+1}^{\sfrac12},\\ 
	&\|p_{j+1}-p_j\|_0\leq 4\overline{M}^2\delta_{j+1}
\end{split}
\end{equation}
for all $j\leq q$. Arguing as in the proof of Proposition \ref{p:stradapt} 
$$
\sum_{q=0}^\infty\delta_{q+1}^{\sfrac12}\leq 4\delta^{\sfrac12}
$$
and therefore
$$
\|v_{q+1}\|_0\leq \|v_0\|_0+\sum_{j=0}^q\|v_{j+1}-v_j\|_0\leq \|v_0\|_0+4\delta^{\sfrac12}\leq M_0.
$$
This concludes the induction step.
	
\bigskip

\noindent{\bf Step 3. Convergence and conclusion. } 

\smallskip
	
So far we have shown that $(v_{q+1},p_{q+1},R_{q+1})$ satisfies ($a_{q+1}$)-($f_{q+1}$). 
The estimates \eqref{e:c0estimates2} show that $\{v_q\}$ and $\{p_q\}$ are Cauchy sequences in $C^0$ and consequently
$$
v_q\to v,\quad p_q\to p\quad\textrm{ uniformly in $\T\times [0,T]$. }
$$
Moreover, $(b_q)$-$(c_q)$ imply that $R_q\to  0$ uniformly, hence $(v,p)$ is a weak solution of the Euler equations such that
$$
\int_{\T}v\otimes v\,dx=\int_{\T}v_0\otimes v_0+R_0\,dx\quad\textrm{ for all }t
$$
and, using once more \eqref{e:c0estimates2} and \eqref{e:constants2},
\begin{equation*}
\begin{split}
\|v-v_0\|_{H^{-1}}&\leq \sum_{q=0}^\infty \|v_{q+1}-v_q\|_{H^{-1}}\\
&\leq \sum_{q=0}^\infty \overline{M}\delta_{q+1}^{\sfrac12}\lambda_{q+1}^{-1}\leq 4\overline{M}\delta^{\sfrac	12}\lambda_1^{-1}< \sigma.
\end{split}
\end{equation*}
Furthermore, as in the proof of Proposition \ref{p:stradapt}, using \eqref{e:basic:H-12} and \eqref{e:constants2} we obtain
\begin{equation*}
\begin{split}
\|v\otimes v-v_0\otimes v_0-R_0\|_{H^{-1}}&\leq \sum_{q=0}^\infty \|v_{q+1}\otimes v_{q+1}+R_{q+1}-v_q\otimes v_q-R_q\|_{H^{-1}}\\
&\leq \sum_{q=0}^\infty \eta \delta_{q+2}\leq 2\delta_2< \sigma.
\end{split}
\end{equation*}

Concerning the initial datum, recall that in the construction above 
$$
(v_{q+1},p_{q+1},R_{q+1})=(v_{q},p_{q},R_{q})
$$
whenever $\rho_q(t)\leq \tfrac32 \delta_{q+2}$ and in particular also for $t=0$. Therefore 
$$
v(\cdot,0)=v_0(\cdot,0)\textrm{ and }p(\cdot,0)=p_0(\cdot,0).
$$
Finally let us turn to the H\"older-continuity of $v,p$. First of all recall from \eqref{e:sol-newrho} that along the iteration
$$
\rho_{q+1}(t)\begin{cases} \geq \tfrac{3}{4}\delta_{q+2}&\textrm{ if }t\in J_q,\\= \rho_q(t)&\textrm{ if }t\notin J_q.\end{cases}
$$
Consequently, if $q\geq j+1$ and $\rho_j(t)>\tfrac{3}{2}\delta_{q+2}$, then also $\rho_{j+1}(t)>\tfrac{3}{2}\delta_{q+2}$ (here we use \eqref{e:constants2}, in the form that $\delta_{j+2}>2\delta_{q+2}$). Hence, if $t\in (0,T]$ is such that $\rho_0(t)>\tfrac{3}{2}\delta_{q+2}$ for some $q$, then inductively we arrive at $\rho_q(t)>\tfrac{3}{2}\delta_{q+2}$. Conversely, it is easy to see that $\rho_{q+1}(t)\leq\rho_q(t)$ for all $q$ and $t$, hence we deduce
$$
J_q=\left\{t\in (0,T]:\,\rho_0(t)>\frac{3}{2}\delta_{q+2}\right\}.
$$
In particular $J_0\subset J_1\subset J_2\subset\dots$ is a nested sequence such that $\bigcup_qJ_q=(0,T]$.

Now let $t>0$ and let $\bar q=\bar q(t)$ be such that $t\in J_{\bar q}\setminus J_{\bar q-1}$. It follows that $t\in J_q$ for all $q\geq \bar q$ and in particular, from $(e_q)$ with $j=q$ we deduce 
$$
[v_q(t)]_1\leq M_0\delta_q^{\sfrac12}\lambda_q,\quad [p_q(t)]_1\leq M_0^2\delta_q\lambda_q
$$
for all $q\geq \bar{q}$. 
Recalling the $C^0$-estimates from \eqref{e:c0estimates2}, we easily deduce
\begin{equation*}
	[v_{q+1}(t)-v_q(t)]_1\leq 2M_0\delta_{q+1}^{\sfrac12}\lambda_{q+1},\quad \|v_{q+1}(t)-v_q(t)\|_0\leq 2M_0\delta_{q+1}^{\sfrac12}
\end{equation*}
hence by interpolation
\begin{equation*}
\begin{split}
	[v_{q+1}(t)-v_q(t)]_{C^\theta}&\leq 2M_0\delta_{q+1}^{1/2}\lambda_{q+1}^{\theta}\\
	&\leq Ca^{b^{q+1}(\theta bc-\sfrac12)}
\end{split}
\end{equation*}
for some constant $C$. 
By our choice of $b,c>1$ in \eqref{e:sol-bc} we have
$$
bc<\frac{1}{2\theta},
$$
hence the exponent in the above estimate is negative. Therefore, in the limit we have $v(\cdot, t)\in C^{\theta}(\T)$ with $C^{\theta}$-norm independent of $t$. This leads to \eqref{e:Ctheta-est}. In a similar manner we can deduce $p\in C([0,T];C^{2\theta}(\T))$. 
	
\end{proof}

\section{Acknowledgement}
Both authors gratefully acknowledge the support of Grant Agreement No.~277993 of the European Research Council. The authors would also like to thank the anonymous referee for the careful reading of the manuscript and for raising the issue about approximating arbitrary background flows in Section \ref{ss:approximation}.

%%%%%%%%%%%%%%%%%%%%%%%%%%%%%%%%%%%%%%%%%%%%%%%%%%%%%%
%%%%%%%%%%%%%%%%%%%%%%%%%%%%%%%%%%%%%%%%%%%%%%%%%%%%%%
%%%%%%%%%%%%%%%%%%%%%%%%%%%%%%%%%%%%%%%%%%%%%%%%%%%%%%

\bibliographystyle{acm}
%\bibliography{Eulerrefs}

\end{document}